\newtheorem{cor}{Corollary}[section]
\newtheorem{lemma}[cor]{Lemma}
\newtheorem{teo}[cor]{Theorem}
\newtheorem{rem}[cor]{Remark}
\newcommand{\R}{\mathbb{R}}
\numberwithin{equation}{section}
\begin{document}

\title{A Willmore-Helfrich $L^{2}$-flow of curves \\
with natural boundary conditions}

\author{Anna Dall'Acqua, Paola Pozzi}

\maketitle

\begin{abstract}
We consider regular open curves  in $\mathbb{R}^n$ with fixed boundary
points and moving according to
 the $L^2$-gradient flow for a generalisation of the Helfrich functional.
Natural boundary conditions are imposed along the evolution.
More precisely, at the boundary the curvature vector is
 equal to the normal projection of a fixed given vector. A long-time
 existence result together with  subconvergence to critical points is
proven.
\end{abstract}

\noindent \textbf{Keywords:} Geometric evolution equation, fourth order problem, natural
 boundary conditions, Helfrich functional, Willmore functional.
 \bigskip
 
 \noindent \textbf{MSC:}  35K55, 35K30, 53C44.

\bigskip

\section{Introduction}
\label{Sez1}

In this paper we study the long-time evolution of regular open curves in $\R^{n}$ ($n \geq 2$) moving according to
 the $L^{2}$-gradient flow for a generalization of the Helfrich functional.
 
 The Helfrich energy of a \emph{closed} plane curve $f: \mathbb{S}^{1} \to \R^{2}$ is given by
 \begin{align}
 \label{helfrich}
 \mathcal{H}_{\lambda}(f)=\frac{1}{2} \int_{\mathbb{S}^{1}} (k-c_{0})^{2} ds + \lambda \mathcal{L}(f),
 \end{align}
 where $ds=|f_{x}|dx$ denotes the arc-length, $\vec{\nu}$ the unit normal of the  curve, $k= \langle f_{ss}, \vec{\nu} \rangle$ its scalar curvature and $\mathcal{L}(f)=\int_{\mathbb{S}^{1}} ds$ the length of  $f$.  The map $c_{0}:\mathbb{S}^{1} \to \R$ is called spontaneous curvature. The constant $\lambda \in \R$ is here taken to be positive, so that the growth in length of a curve is penalized.
The above functional is motivated by  the modeling of cell membranes, see \cite{Helfrich}.
Note that if $c_{0}$ is a constant, as we will assume henceforth,  then \eqref{helfrich} reduces to
\begin{equation*}
\mathcal{H}_{\lambda}(f)=\frac{1}{2} \int_{\mathbb{S}^{1}} |k|^{2} ds + \left(\lambda+ \frac{1}{2}c_{0}^{2} \right) \mathcal{L}(f) -2c_{0}\pi \omega,
\end{equation*}
where $\omega$ denotes the winding number of $f$. The special case where $c_{0}=0$ and $\lambda=0$ is sometimes known  as Willmore functional
 and it can also be historically motivated by the so-called Euler-Bernoulli model of elastic rods (see \cite{Truesdell}).
 
A possible generalisation of \eqref{helfrich} to $n$-dimensional closed curves for $n \geq 2$ is given by
\begin{equation}
\label{helfrichn}
\mathcal{H}_{\lambda}(f)=\frac{1}{2} \int_{\mathbb{S}^{1}} |\vec{\kappa}-\vec{c_{0}}|^{2} ds + \lambda \mathcal{L}(f),
\end{equation} 
where now $\vec{\kappa}=\partial_{ss} f$ is the curvature \emph{vector} and $\vec{c_{0}}$ is a given vector in $\R^{n}$. 
Note that since $\int_{\mathbb{S}^{1}} \langle \vec{\kappa}, \vec{c_{0}}\rangle \,ds = \int_{\mathbb{S}^{1}} \langle \partial_{ss}f, \vec{c_{0}}\rangle\, ds=0$  we can view \eqref{helfrichn} as a  natural extension of the classical Helfrich energy.

The Helfrich and Willmore energies are mathematically very interesting  and in particular the Willmore flow
is nowadays considered to be one of the most important models 
in which fourth order PDEs appear. 
Both functionals have 
been extensively investigated analytically and numerically in recent years and the literature is by now rather vast. 
Many of the references we cite  provide extensive information on the history and development of the research on Willmore/Helfrich functionals and related flows, thus we refrain from giving here a thorough account.

In \cite{DKS} the authors study analytically and numerically the long-time evolution of closed curves in $\R^{n}$ moving by the gradient flow of the elastic energy $E(f)=\frac{1}{2}\int_{\mathbb{S}^{1}} |\vec{\kappa}|^{2} ds$: the length of the curves is either a fixed constraint or added as a penalizing term as in \eqref{helfrichn}.
Their work extends  previous results of  \cite{Polden} and \cite{Wen} in the plane.
Further important related work in $\R^3$ can be found in \cite{LangerSinger1}, \cite{LangerSinger2}, and \cite{Koiso}.
In \cite{Wheeler} the author considers \eqref{helfrichn} for closed curves in $\R^n$ and for a specific class of spontaneous curvature vector fields $\vec{c_{0}}$ (in particular $\vec{c_{0}}$ is not required to be constant) and shows global existence of the related flow.
In the graph setting  the stationary problem for the elastic energy of \emph{open} curves subject to different boundary conditions was considered 
  in \cite{DG07}, \cite{DG08}, and  \cite{LinnerJerome}. Lin  investigated in \cite{Lin} the $L^{2}$- gradient flow of elastic curves in $\R^{n}$ with clamped boundary conditions.
In \cite{BGN} several interesting numerical simulations for the elastic flow of open and closed curves in $\R^{n}$ are presented.
An error analysis for a  FEM-approximation of the elastic flow for  curves in $\R^n$ can be found in \cite{DD09}.

Our investigation can be viewed as the next natural research step following the work of  \cite{DKS} and \cite{Lin}.

As already pointed out, here we are concerned with the study of \eqref{helfrichn} for open curves. More precisely we consider a time dependent family of regular curves $f: [0,T) \times \bar{I} \to \mathbb{R}^{n}$, $n \geq 2 $, $I=(0,1)$, with boundary points fixed in time, i.e.  
\begin{align}
\label{intro:bdryI}
f(t,0)=f_{-}, \qquad  f(t, 1)=f_{+} \qquad \forall \, t \in [0,T),
\end{align}
where $f_{-} \neq f_{+} \in \R^{n}$ are given.
For simplicity we write the energy \eqref{helfrichn} as follows
\begin{equation}
W_{\lambda}(f) = \int_{I} \left(\frac12 |\vec{\kappa}|^{2} - \langle \vec{\kappa}, \zeta \rangle \right) ds+ \lambda\int_{I} ds ,
\end{equation}
with $\zeta$ a given vector in $\mathbb{R}^n$ and $\lambda \geq 0$.

The associated $L^{2}$-gradient flow  for the one-parameter family of curves subject to \eqref{intro:bdryI} and to the \emph{natural boundary conditions}
\begin{align}
\label{intro:bdryII}
\vec{\kappa}(t,x)=\zeta- \langle \zeta, \tau(t,x) \rangle \tau(t,x) \qquad x \in \{ 0,1 \},
\end{align}
with $\tau=\partial_s f=\frac{f_{x}}{|f_{x}|}$ unit tangent, leads to the fourth order PDE
\begin{align}
\label{intro:eqflow}
 \partial_{t}f=-\nabla^{2}_{s} \vec{\kappa} -\frac{1}{2} |\vec{\kappa}|^{2} \vec{\kappa} + \lambda \vec{\kappa},  
 \end{align}
 where $\nabla_{s} \phi= \partial_{s} \phi - \langle \partial_{s} \phi, \tau \rangle \tau$ denotes the normal component of $\partial_{s} \phi$.
 
Our main result shows that for smooth initial data $f(0, \cdot)$ the flow exists for all time. 
\begin{teo}\label{mainTh}
Let $\lambda \geq 0$, and 
let vectors $f_{+}, f_{-}, \zeta \in \mathbb{R}^n$ with $f_{+} \ne f_{-}$ be given as well as a smooth regular curve $f_0: \bar{I} \rightarrow \mathbb{R}^n$ satisfying
\begin{align*}
& f_{0}(0)=f_{-} , \; f_{0}(1)=f_{+} \, ,\\
& \kappa[f_0](x) + \langle \zeta, \tau[f_0](x) \rangle \tau[f_0](x) =\zeta\mbox{ for }x\in \{0,1\} \, ,
\end{align*}
with $\vec{\kappa}[f_0]$ and $\tau[f_0]$ the curvature and tangent vector of $f_0$ respectively, together with suitable compatibility conditions (see Appendix \ref{AppendixCompa}). Then  a smooth solution $f: [0,T)\times [0,1] \rightarrow \mathbb{R}^n$ of the initial value problem 
 \begin{equation}\label{ivp}
\left\{\begin{array}{l} 
\partial_{t}f=-\nabla^{2}_{s} \vec{\kappa} -\frac{1}{2} |\vec{\kappa}|^{2} \vec{\kappa} + \lambda \vec{\kappa} \\
f( 0,x)=f_{0}(x) \mbox{ for }x \in [0,1]\\
f(t,0)=f_{-}, f(1,t)=f_{+} \mbox{ for 
}t \in [0,T)\\
 \vec{\kappa}(t,x) + \langle \zeta, \tau(t,x) \rangle \tau(t,x)=\zeta \mbox{
   for }x\in \{0,1\} \mbox{ and for 
}t \in [0,T),
\end{array} \right.
 \end{equation}
exists for all times, that is we may take $T=\infty$. 
Moreover if $\lambda>0$,  then as $t_{i} \to \infty$  the curves $f(t_{i, \cdot})$ subconverge, when reparametrized by arc-length, to a critical point of the Willmore-Helfrich functional with fixed endpoints, that is to a solution of 
\begin{equation}\label{ebvp}
\left\{\begin{array}{l} 
-\nabla^{2}_{s} \vec{\kappa} -\frac{1}{2} |\vec{\kappa}|^{2} \vec{\kappa} + \lambda \vec{\kappa}  =0 \, ,\\
f(0)=f_{-} \, , f(1)=f_{+} \, ,  \\
 \vec{\kappa}(x) + \langle \zeta, \tau(x) \rangle \tau(x)=\zeta \mbox{ for }x\in \{0,1\} \, .
\end{array} \right.
 \end{equation}
 \end{teo}

The method of proof borrows ideas from \cite{DKS} and \cite{Lin}. In order to motivate better the mathematical constructions that will follow, we recall here some of the most important arguments.

The main strategy is to assume that the flow exists only up to a finite time $T<\infty$ and to show that upper bounds for $\|\partial_{s}^{m} \vec{\kappa} \|_{L^{\infty}}$ hold for any $m \in \mathbb{N}_{0}$, so that we get a contradiction.
In order to obtain such bounds the key step is to look at the quantity (cf. Lemma \ref{lempartint})
\begin{align*}
\frac{d}{dt} \frac{1}{2}\int_{I} |\vec{\phi}|^{2} ds =
\int_{I} \langle \nabla_{t} \vec{\phi}, \vec{\phi} \rangle ds - \frac{1}{2} \int_{I} |\vec{\phi}|^{2} \langle \vec{\kappa}, \vec{V} \rangle ds,
\end{align*} 
where $\vec{V}=-\nabla^{2}_{s} \vec{\kappa} -\frac{1}{2} |\vec{\kappa}|^{2}\vec{\kappa} + \lambda \vec{\kappa}$ denotes the normal velocity of the flow (see \eqref{intro:eqflow}) and $\vec{\phi}$ is an appropriately chosen normal vector field, precisely $\vec{\phi}=\nabla_{s}^{m} \vec{\kappa}$ in \cite{DKS} and $\vec{\phi}=\nabla_{t}^{m} f$ in \cite{Lin} respectively.
In order to be able to bound the right-hand side of the above expression it is wise to add to both sides of the equation the carefully chosen term
\begin{equation}\label{sceltagiusta}
 \int_{I} \langle \nabla_{s}^{4} \vec{\phi}, \vec{\phi} \rangle ds
 \end{equation}
so that after integration by parts one obtains
\begin{align}
\label{alcompleto}
\frac{d}{dt} \frac{1}{2}\int_{I} |\vec{\phi}|^{2} ds + \int_{I}
|\nabla_{s}^{2} \vec{\phi}|^{2 } ds - & [ \langle \nabla_s \vec{\phi}, \nabla_s^2 \vec{\phi} \rangle ]_0^1 + [ \langle  \vec{\phi}, \nabla_s^3 \vec{\phi} \rangle ]_0^1  \notag \\
& =\int_{I} \langle Y, \vec{\phi} \rangle ds - \frac{1}{2} \int_{I} |\vec{\phi}|^{2} \langle \vec{\kappa}, \vec{V} \rangle ds ,
\end{align}  
where $Y=\nabla_{t}\vec{\phi}+ \nabla_{s}^{4} \vec{\phi}$.
The choice of \eqref{sceltagiusta} is dictated by the 
problem itself: indeed if one takes $\vec{\phi}=\vec{\kappa}$ (as in the setting of closed curves studied in \cite{DKS}) and looks at the parabolic equation \eqref{e} satisfied by the curvature, one recognizes that the sum $Y$ has now lower order terms than $\nabla_{t} \vec{\phi}$. The same happens also by taking $\vec{\phi}=\nabla_t f$ (as in \cite{Lin}) and using \eqref{intro:eqflow} and \eqref{E5}. Furthermore, with these choices it turns out that the right-hand side of \eqref{alcompleto} can be controlled by $\int_{I} |\nabla_{s}^{2} \vec{\phi}|^{2 } ds $ with the help of  suitable interpolation inequalities. 

We still have to comment on the boundary terms in \eqref{alcompleto}. In \cite{DKS} they did not actually come into play, because the authors deal with closed curves only. Lin on the contrary, who studied \eqref{intro:eqflow} subject to the \emph{clamped boundary conditions}, namely \eqref{intro:bdryI} together with
\begin{equation}
\label{intro:bdryLin}
\tau(t,0)= \tau_{-}, \qquad \tau(t,1)= \tau_{+} \quad \forall \, t \in [0,T),
\end{equation} 
(for given $\tau_{\pm} \in \R^n$) opted for choosing as $\vec{\phi}$ the only quantity which contains all relevant information about the curvature and which has zero boundary conditions, namely $\vec{\phi}=\nabla_{t}^{m} f$ (note that $\partial_{t}^{m} f$ is zero at the boundary). 
In the setting of Lin  it turns out that \emph{all} boundary terms in \eqref{alcompleto} are zero (see Remark \ref{remLin}).

In our setting the situation is definitely more complicated. Indeed due to the observations above it is still natural to work with $\vec{\phi}=\nabla_{t}^{m }f$ as in \cite{Lin}; however the boundary terms in \eqref{alcompleto} 
do \emph{not} disappear. The strategy here is to use again the 
structure of the equation  to infer that  the ``worst order''  terms are in fact of lower order as at first sight (see Lemma \ref{lembterms} for details) and to bound them with appropriate interpolation inequalities (see \S \ref{Sez3.1}).

The paper is organized as follows. In Section \ref{Sez2}  and \ref{Sez3} we fix the notation and collect a series of technical Lemmas, many of which are induced  by the geometry of the problem. We provide several comments to help the reader to understand both their motivation and derivation. Section \ref{Sez4} deals with interpolation inequalities and finally in Section \ref{Sez5} we give the proof of Theorem \ref{mainTh}.

Although some of the technical lemmas are adaptation to the present setting and notation of results given in \cite{DKS} and \cite{Lin} we would like the paper to be self-contained and therefore report full proofs. Some of them are collected in the Appendix for the sake of readability.
 
Finally, let us remark that, since the next relevant and natural question is to investigate  the evolution  of \eqref{intro:eqflow} subject to either natural or clamped boundary conditions but with a \emph{fixed length} constraint, we have decided to carefully keep track of the parameter $\lambda$ in all proofs.   
This problem will be treated elsewhere.

\bigskip

\textbf{Acknowledgements.} 
This work was partially supported by
DFG Transregional Collaborative Research Centre
SFB~TR~71. The first author gratefully acknowledges the \lq\lq  Deutsche  Forschungsgemeinschaft\rq\rq\ for the project \lq\lq Randwertprobleme f\"ur Willmorefl\"achen - Analysis, Numerik und Numerische Analysis\rq\rq \,(DE 611/5.1) and the grant no. HO4697/1-1. The authors thank Glen Wheeler for drawing their attention to
\cite{Lin}.


\section{Preliminaries and geometrical Lemmas}
\label{Sez2}
\subsection{Preliminaries and Notation}

We consider a time dependent curve $f: [0,T) \times \bar{I} \to
\mathbb{R}^{n}$, $f=f(t,x)$,  with $n \geq 2 $, $I=(0,1)$ and with endpoints fixed in time, that is $ f(t, 0) =f_{-}$, $f(t, 1)=f_{+} $ for given vectors $f_{-}, f_{+} \in \mathbb{R}^n$, $f_{-} \ne f_{+}$. 

As usual we denote by $s$ the arc-length parameter. Then $ds = |f_{x}| dx$, $\partial_s = \frac{1}{|f_{x}|} \partial_x$, $\tau = \partial_s f$ is the tangent unit vector and the curvature vector is given by $\vec{\kappa} = \partial_{s s} f$. In the following, vector fields with an arrow on top are normal vector fields. The standard scalar product in $\mathbb{R}^n$ is denoted by $\langle \cdot , \cdot \rangle $, while $\nabla_{ s} \phi$  (resp. $\nabla_{t} \phi$) is the normal component of $\partial_{s} \phi$  (resp. $\partial_{t} \phi$) for  a vector field $\phi$. That is, 
$$\nabla_{ s} \phi = \partial_{ s} \phi - \langle \partial_{ s} \phi, \tau \rangle \tau \, .$$   
The Willmore-Helfrich energy for the curve $f$ is given by
\begin{equation}\label{Wh}
W_{\lambda}(f) = \int_{I} \left(\frac12 |\vec{\kappa}|^{2} - \langle \vec{\kappa}, \zeta \rangle \right) ds+ \lambda\int_{I} ds ,
\end{equation}
where $\zeta$ is a given vector in $\mathbb{R}^n$ and $\lambda \geq 0$ a second parameter.
In this paper we study
\begin{align}\label{eqh}
 \partial_{t}f=-\nabla^{2}_{s} \vec{\kappa} -\frac{1}{2} |\vec{\kappa}|^{2} \vec{\kappa} + \lambda \vec{\kappa} \, ,
 \end{align}
for a smooth regular curve $f$  subject to the boundary conditions
\begin{align}\nonumber
f(t, 0) &=f_{-}\ , \quad f(t, 1)=f_{+} \, , \\  \label{bch}
\vec{\kappa}(t, 0)&= \zeta - \langle \zeta, \tau(t, 0) \rangle \tau(t, 0)\, , \quad \quad \mbox{ for all }t \in (0,T) \, \\ \nonumber
\vec{\kappa}(t, 1) &= \zeta - \langle \zeta, \tau(t, 1) \rangle \tau(t, 1) \, , 
\end{align} 
and for some smooth initial data $f_{0}$. Notice that the second boundary condition gives that the curvature at the boundary is equal to the normal component of the vector $\zeta$.

Lemma \ref{endecr} and Lemma \ref{lemfv} show that equation \eqref{eqh} corresponds to the $L^{2}$-gradient flow for $W_{\lambda}$ and that the boundary conditions considered are natural in the usual sense of calculus of variation.

Aim of this paper is to show the results formulated in Theorem \ref{mainTh}.

\subsection{Geometrical Lemmas}
We start by studying the variation of some geometrical quantities considering smooth solutions $f:[0, T)\times \bar{I} \rightarrow \mathbb{R}^{n}$ of the more general flow 
\begin{equation*}
\partial_{t } f =\vec{V} + \varphi \tau
\end{equation*}
with $\vec{V}$ the normal velocity and $\varphi= \langle \partial_{t } f, \tau \rangle $ the tangential component of the velocity.
\begin{lemma}\label{lemform} 
Let $f:[0, T)\times \bar{I} \rightarrow \mathbb{R}^{n}$, $f=f(t,x)$, be a smooth solution of $\partial_{t} f = \vec{V} + \varphi \tau$ for $t \in (0, T)$, $x \in I$, and  with $\vec{V}$ the normal velocity. Given $\vec{\phi}$ any smooth normal field along $f$, the following formulas hold.
\begin{align}
\label{a}
\partial_{t}(ds)&=(\partial_{s} \varphi - \langle \vec{\kappa}, \vec{V} \rangle) ds \, , \\
\label{b}
\partial_{t} \partial_{s}- \partial_{s}\partial_{t} &= (\langle \vec{\kappa}, \vec{V} \rangle -\partial_{s} \varphi) \partial_{s} \, ,\\
\label{c}
\partial_{t} \tau &= \nabla_{s} \vec{V} + \varphi \vec{\kappa} \, ,\\
\label{d}
  \partial_{t} \vec{\phi}&= \nabla_{t} \vec{\phi}- \langle \nabla_s \vec{V} + \varphi \vec{\kappa}, \vec{\phi} \rangle \tau \, ,\\ 
\label{e0}
\partial_{t} \vec{\kappa} & = \partial_{s} \nabla_{s} \vec{V} + \langle \vec{\kappa}, \vec{V}\rangle  \vec{\kappa} + \varphi \partial_{s} \vec{\kappa} \, ,\\
\label{e}
\nabla_{t} \vec{\kappa}& = \nabla_{s}^{2} \vec{V} + \langle \vec{\kappa}, \vec{V}\rangle  \vec{\kappa} + \varphi \nabla_{s} \vec{\kappa} \, ,\\
\label{f}
(\nabla_{t}\nabla_{s}- \nabla_{s}\nabla_{t}) \vec{\phi} &=(\langle
\vec{\kappa},\vec{V} \rangle -\partial_{s }\varphi) \nabla_{s}\vec{\phi}
+[\langle \vec{\kappa}, \vec{\phi}\rangle \nabla_{s}\vec{V} -\langle
\nabla_{s} \vec{V}, \vec{\phi} \rangle \vec{\kappa}] 
\end{align}
and
\begin{align}
\label{g}
& (\nabla_{t}\nabla_{s}^2- \nabla_{s}^2\nabla_{t}) \vec{\phi} \\ \nonumber
& \quad =2 (\langle \vec{\kappa},\vec{V} \rangle -\partial_{s }\varphi) \nabla_{s}^2\vec{\phi} - (\partial^2_{s }\varphi) \nabla_{s}\vec{\phi}\\ \nonumber
&\qquad  +[\langle \nabla_s\vec{\kappa}, \vec{\phi}\rangle \nabla_{s}\vec{V} + \langle \vec{\kappa}, \vec{\phi}\rangle \nabla_{s}^2\vec{V} -\langle \nabla_{s}^2 \vec{V}, \vec{\phi} \rangle \vec{\kappa} -\langle \nabla_{s} \vec{V}, \vec{\phi} \rangle \nabla_s \vec{\kappa}] \\ \nonumber
&\qquad +[\langle \vec{\kappa}, \nabla_s \vec{V} \rangle \nabla_{s} \vec{\phi}+ \langle \nabla_s \vec{\kappa}, \vec{V} \rangle \nabla_{s} \vec{\phi} + 2 \langle \vec{\kappa}, \nabla_s \vec{\phi}\rangle \nabla_{s} \vec{V} - 2 \langle \nabla_{s} \vec{V}, \nabla_s \vec{\phi} \rangle \vec{\kappa} ] \, .
\end{align}
Furthermore, if $\varphi \equiv 0$, and if $\vec{\phi} = 0 =\vec{V}$ at the boundary, we have that at the boundary
\begin{align} \nonumber
\partial_{t} \partial_{s} & = \partial_{s}\partial_{t} \, ,\\
\label{f1}
\nabla_{t}\nabla_{s} \vec{\phi} & = \nabla_{s}\nabla_{t} \vec{\phi} \, ,\\
\label{g1}
\nabla_{t}\nabla_{s}^2 \vec{\phi} & =  \nabla_{s}^2\nabla_{t} \vec{\phi} \\
& \quad +[\langle \vec{\kappa}, \nabla_s \vec{V} \rangle \nabla_{s} \vec{\phi} + 2 \langle \vec{\kappa}, \nabla_s \vec{\phi}\rangle \nabla_{s} \vec{V} - 2 \langle \nabla_{s} \vec{V}, \nabla_s \vec{\phi} \rangle \vec{\kappa} ] .\nonumber
\end{align}
\end{lemma}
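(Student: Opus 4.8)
The plan is to establish the identities in the order listed, since each is built from its predecessors together with the two structural facts that $\langle \vec{\phi}, \tau\rangle = 0$ for any normal field $\vec{\phi}$ and that $\partial_s \tau = \vec{\kappa}$. I would begin with \eqref{a}: writing $ds = |f_x|\,dx$ and differentiating in $t$ gives $\partial_t |f_x| = |f_x| \langle \partial_s \partial_t f, \tau\rangle$, after using $\partial_x = |f_x|\partial_s$ and $f_x = |f_x|\tau$. Inserting $\partial_t f = \vec{V} + \varphi\tau$ and using $\langle \partial_s \vec{V}, \tau\rangle = -\langle \vec{V}, \vec{\kappa}\rangle$ (which follows from differentiating $\langle \vec{V},\tau\rangle=0$) together with $\langle \partial_s(\varphi\tau),\tau\rangle = \partial_s\varphi$ yields \eqref{a}. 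Formula \eqref{b} is then immediate: since $\partial_t$ and $\partial_x$ commute, $\partial_t \partial_s - \partial_s\partial_t = -(\partial_t |f_x|/|f_x|)\,\partial_s$, and \eqref{a} supplies the coefficient.

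Next I would feed \eqref{b} into the definitions $\tau = \partial_s f$ and $\vec{\kappa} = \partial_s \tau$. Applying \eqref{b} to $f$, expanding $\partial_s\partial_t f = \partial_s \vec{V} + (\partial_s\varphi)\tau + \varphi\vec{\kappa}$, and absorbing the tangential part of $\partial_s\vec{V}$ via $\partial_s \vec{V} = \nabla_s\vec{V} - \langle\vec{V},\vec{\kappa}\rangle\tau$, produces \eqref{c}. Formula \eqref{d} is then a one-line consequence: differentiating $\langle\vec{\phi},\tau\rangle=0$ in $t$ gives $\langle\partial_t\vec{\phi},\tau\rangle = -\langle\vec{\phi},\partial_t\tau\rangle$, into which I substitute \eqref{c} and the decomposition $\partial_t\vec{\phi} = \nabla_t\vec{\phi} + \langle\partial_t\vec{\phi},\tau\rangle\tau$. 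Applying \eqref{b} to $\tau$ and using \eqref{c} in the same way gives \eqref{e0}, and taking the normal component of \eqref{e0} (noting the normal part of $\partial_s\nabla_s\vec{V}$ is $\nabla_s^2\vec{V}$ and that $\vec{\kappa}$ is already normal) gives \eqref{e}.

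The real work is the commutator \eqref{f}. Here I would compute $\nabla_t\nabla_s\vec{\phi}$ and $\nabla_s\nabla_t\vec{\phi}$ separately by repeatedly projecting onto the normal bundle, starting from $\nabla_s\vec{\phi} = \partial_s\vec{\phi} + \langle\vec{\phi},\vec{\kappa}\rangle\tau$ and the analogous expression from \eqref{d}, applying $\nabla_t = (\partial_t\,\cdot)^{\perp}$ and $\nabla_s = (\partial_s\,\cdot)^{\perp}$ and using \eqref{b} to interchange $\partial_t,\partial_s$ and \eqref{c} for $\partial_t\tau$. The bookkeeping of which tangential pieces $\langle\,\cdot\,,\tau\rangle\tau$ survive the projection is the main obstacle; the identity $\partial_s\langle\vec{\kappa},\vec{V}\rangle = \langle\nabla_s\vec{\kappa},\vec{V}\rangle + \langle\vec{\kappa},\nabla_s\vec{V}\rangle$ (valid since $\vec{\kappa},\vec{V}$ are normal) is what lets one recognise and cancel terms, leaving the transport term $(\langle\vec{\kappa},\vec{V}\rangle - \partial_s\varphi)\nabla_s\vec{\phi}$ and the curvature bracket $\langle\vec{\kappa},\vec{\phi}\rangle\nabla_s\vec{V} - \langle\nabla_s\vec{V},\vec{\phi}\rangle\vec{\kappa}$, i.e.\ \eqref{f}.

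Finally, \eqref{g} follows purely algebraically from \eqref{f} by iteration, with no new geometric input: writing $\nabla_t\nabla_s^2\vec{\phi} = \nabla_t\nabla_s(\nabla_s\vec{\phi})$ I apply \eqref{f} with $\vec{\phi}$ replaced by $\nabla_s\vec{\phi}$, then apply $\nabla_s$ to \eqref{f} itself (using $\nabla_s(g\vec{w}) = (\partial_s g)\vec{w} + g\nabla_s\vec{w}$ for scalar $g$ and normal $\vec{w}$). The two copies of $(\langle\vec{\kappa},\vec{V}\rangle - \partial_s\varphi)\nabla_s^2\vec{\phi}$ thus produced combine into the factor $2$, while $\partial_s(\langle\vec{\kappa},\vec{V}\rangle - \partial_s\varphi)$ expands into the $-(\partial_s^2\varphi)$ term and the two inner products $\langle\vec{\kappa},\nabla_s\vec{V}\rangle$, $\langle\nabla_s\vec{\kappa},\vec{V}\rangle$ appearing in the last bracket. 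The boundary statements then follow by specialisation: setting $\varphi\equiv 0$ in \eqref{b}, \eqref{f}, \eqref{g} and evaluating at $x\in\{0,1\}$, where $\vec{\phi} = 0 = \vec{V}$, annihilates every term carrying an \emph{undifferentiated} factor of $\vec{V}$ or $\vec{\phi}$. In \eqref{f} this kills all terms, yielding $\partial_t\partial_s = \partial_s\partial_t$ and \eqref{f1}; in \eqref{g} the only survivors are the three terms built from $\nabla_s\vec{\phi}$ and $\nabla_s\vec{V}$ alone, which is precisely \eqref{g1}.
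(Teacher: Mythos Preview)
Your proposal is correct and follows essentially the same approach as the paper: formulas \eqref{a}--\eqref{f} are derived directly from the definitions (the paper simply cites \cite{DKS} and \cite{Lin} for these), and \eqref{g} is obtained exactly as you describe, by applying \eqref{f} first with $\vec{\phi}$ replaced by $\nabla_s\vec{\phi}$ and then applying $\nabla_s$ to \eqref{f} itself. The boundary specialisations \eqref{f1}--\eqref{g1} are likewise obtained in the paper by the same vanishing argument you give.
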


\begin{proof}
See \cite[Lemma 2.1]{DKS} or \cite[Lemma 1]{Lin} for formulas \eqref{a} to \eqref{f}. The last formula follows from \eqref{f} as follows
\allowdisplaybreaks{\begin{align*}
\nabla_{t}\nabla_{s}^2 \vec{\phi} & = (\nabla_s \nabla_t) \nabla_s \vec{\phi} + (\langle \vec{\kappa},\vec{V} \rangle -\partial_{s }\varphi) \nabla_{s}^2\vec{\phi} +[\langle \vec{\kappa}, \nabla_s \vec{\phi}\rangle \nabla_{s}\vec{V} -\langle \nabla_{s} \vec{V}, \nabla_s\vec{\phi} \rangle \vec{\kappa}]\\
& = \nabla_s [ \nabla_s \nabla_t \vec{\phi}+ (\langle \vec{\kappa},\vec{V} \rangle -\partial_{s }\varphi) \nabla_{s}\vec{\phi} +[\langle \vec{\kappa}, \vec{\phi}\rangle \nabla_{s}\vec{V} -\langle \nabla_{s} \vec{V}, \vec{\phi} \rangle \vec{\kappa}] ] \\
&\quad + (\langle \vec{\kappa},\vec{V} \rangle -\partial_{s }\varphi) \nabla_{s}^2\vec{\phi} +[\langle \vec{\kappa}, \nabla_s \vec{\phi}\rangle \nabla_{s}\vec{V} -\langle \nabla_{s} \vec{V}, \nabla_s\vec{\phi} \rangle \vec{\kappa}]\\
& =  \nabla_s^2 \nabla_t \vec{\phi}+ (\langle \vec{\kappa},\vec{V} \rangle -\partial_{s }\varphi) \nabla_{s}^2\vec{\phi}+ (\langle \nabla_s \vec{\kappa},\vec{V} \rangle + \langle  \vec{\kappa},\nabla_s \vec{V} \rangle - \partial^2_{s }\varphi) \nabla_{s}\vec{\phi} \\
&\quad  +[\langle \vec{\kappa}, \vec{\phi}\rangle \nabla_{s}^2\vec{V} -\langle \nabla_{s} \vec{V}, \vec{\phi} \rangle \nabla_s \vec{\kappa}]  \\
& \quad +[\langle \nabla_{s} \vec{\kappa}, \vec{\phi}\rangle \nabla_{s}\vec{V}  +\langle  \vec{\kappa}, \nabla_{s} \vec{\phi}\rangle \nabla_{s}\vec{V} -\langle \nabla^2_{s} \vec{V}, \vec{\phi} \rangle \vec{\kappa} -\langle \nabla_{s} \vec{V},  \nabla_{s} \vec{\phi} \rangle \vec{\kappa}]  \\
& \quad + (\langle \vec{\kappa},\vec{V} \rangle -\partial_{s }\varphi) \nabla_{s}^2\vec{\phi} +[\langle \vec{\kappa}, \nabla_s \vec{\phi}\rangle \nabla_{s}\vec{V} -\langle \nabla_{s} \vec{V}, \nabla_s\vec{\phi} \rangle \vec{\kappa}] \\
& =  \nabla_s^2 \nabla_t \vec{\phi}+ 2 (\langle \vec{\kappa},\vec{V} \rangle -\partial_{s }\varphi) \nabla_{s}^2\vec{\phi} - (\partial^2_{s }\varphi) \nabla_{s}\vec{\phi} \\
& \quad +[\langle \nabla_{s} \vec{\kappa}, \vec{\phi}\rangle \nabla_{s}\vec{V} +\langle \vec{\kappa}, \vec{\phi}\rangle \nabla_{s}^2\vec{V}  -\langle \nabla^2_{s} \vec{V}, \vec{\phi} \rangle \vec{\kappa}-\langle \nabla_{s} \vec{V}, \vec{\phi} \rangle \nabla_s \vec{\kappa}]  \\
& \quad +[ \langle  \vec{\kappa},\nabla_s \vec{V} \rangle \nabla_{s} \vec{\phi} +\langle \nabla_s \vec{\kappa},\vec{V} \rangle \nabla_{s} \vec{\phi} + 2\langle  \vec{\kappa}, \nabla_{s} \vec{\phi}\rangle \nabla_{s}\vec{V}  -2 \langle \nabla_{s} \vec{V},  \nabla_{s} \vec{\phi} \rangle \vec{\kappa}] .
\end{align*}}
\end{proof}

In the previous lemma it is made evident that $\nabla_s$ and $\nabla_t$ commutes at the boundary when certain quantities vanish. In the next lemma we see which terms are zero at the boundary when $f$ satisfies the boundary conditions~\eqref{bch}. 
\begin{lemma}\label{lembdybeh}
Under the assumption that $f$ solves $\partial_{t} f = \vec{V}$ on $(0,T) \times I$ with boundary conditions $f(t,0)=f_{-}$ and $f(t,1)=f_{+}$ for all $t$, we have that for $m \in \mathbb{N}_{0}$
\begin{align*}
\partial_{t} f= \nabla_{t} f  =0, \qquad \nabla_{t}^{m+1}f =0 \quad \mbox{ and } \quad 
\nabla_t^m \vec{V} =0 \quad \mbox{ for }x \in \{0,1\} \, .
\end{align*}
\end{lemma}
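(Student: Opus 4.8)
The plan is to reduce the whole statement to a single family of identities about the normal velocity and then to prove that family by induction on $m$. Since the endpoints are fixed in time, $f(t,0)\equiv f_{-}$ and $f(t,1)\equiv f_{+}$, so differentiating in $t$ gives $\partial_{t}f=0$ at $x\in\{0,1\}$ for all $t$; more generally $\partial_{t}^{k}f=0$ there for every $k\geq 1$, simply because $f$ is constant in $t$ at the boundary. Because $\vec{V}$ is the normal velocity and the flow is $\partial_{t}f=\vec{V}$ (so the tangential part $\varphi=\langle\partial_{t}f,\tau\rangle$ vanishes identically), we have $\nabla_{t}f=\partial_{t}f-\langle\partial_{t}f,\tau\rangle\tau=\vec{V}$ everywhere, and hence $\nabla_{t}^{m+1}f=\nabla_{t}^{m}\vec{V}$. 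Thus the two asserted families $\nabla_{t}^{m+1}f=0$ and $\nabla_{t}^{m}\vec{V}=0$ at the boundary coincide, the identity $\nabla_{t}f=0$ at the boundary is the case $m=0$, and $\partial_{t}f=0$ has already been noted. It therefore suffices to show that $\nabla_{t}^{m}\vec{V}=0$ at $x\in\{0,1\}$ for all $t$ and all $m\in\mathbb{N}_{0}$.

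I would prove this by induction on $m$. The base case $m=0$ is exactly $\vec{V}=\partial_{t}f=0$ at the boundary, established above. For the inductive step, assume $\vec{\psi}:=\nabla_{t}^{m}\vec{V}$ vanishes at $x\in\{0,1\}$ for all $t$. Since $\vec{\psi}$ is a normal field and $\varphi\equiv 0$, formula \eqref{d} rearranges to $\nabla_{t}\vec{\psi}=\partial_{t}\vec{\psi}+\langle\nabla_{s}\vec{V},\vec{\psi}\rangle\tau$. At the boundary both summands vanish: the correction term is linear in $\vec{\psi}$, which is zero there by the inductive hypothesis; and since $\vec{\psi}(t,0)\equiv 0$ (resp.\ at $x=1$) as a function of $t$, its time derivative $\partial_{t}\vec{\psi}$ also vanishes at the boundary. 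Hence $\nabla_{t}^{m+1}\vec{V}=\nabla_{t}\vec{\psi}=0$ at the boundary, closing the induction and completing the proof.

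The only point requiring genuine care, and the one I expect to be the ``main obstacle'', is that $\nabla_{t}$ does not simply commute with restriction to the boundary: $\nabla_{t}$ and $\partial_{t}$ differ by a tangential correction, as recorded in \eqref{d}. The argument works precisely because that correction is proportional to the field itself, so it is annihilated by the inductive hypothesis, and because a field that vanishes on the (time-independent) boundary for all $t$ automatically has vanishing $t$-derivative there. Everything else is bookkeeping; no interpolation or PDE structure beyond $\varphi\equiv 0$ and the fixed-endpoint condition is needed here.
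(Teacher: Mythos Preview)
Your proof is correct and, since the paper's own proof consists of the single sentence ``The statements follow directly from the assumptions,'' your argument is essentially a careful unpacking of that one line. The induction via \eqref{d} (or, equivalently, just the definition $\nabla_t\vec{\psi}=\partial_t\vec{\psi}-\langle\partial_t\vec{\psi},\tau\rangle\tau$ together with $\vec{\psi}(t,\cdot)|_{\partial I}\equiv 0\Rightarrow\partial_t\vec{\psi}|_{\partial I}=0$) is exactly what the authors have in mind.
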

\begin{proof}
The statements follow directly from the assumptions. 
\end{proof}


The general idea to prove Theorem \ref{mainTh} is to differentiate repeatedly Equation \eqref{eqh} and then, by integration, to derive estimates in appropriate Sobolev spaces. In this approach the following lemma is crucial.
\begin{lemma}\label{lempartint}
Suppose $\partial_{t}f =\vec{V}$ on $(0,T) \times I$. Let $\vec{\phi}$ be a normal vector field along $f$ and $Y=\nabla_{t} \vec{\phi} + \nabla_{s}^{4} \vec{\phi}$.
Then
\begin{align}\label{eqgen0}
\frac{d}{dt} \frac{1}{2}\int_{I} |\vec{\phi}|^{2} ds + \int_{I}
|\nabla_{s}^{2} \vec{\phi}|^{2 } ds & = - [ \langle \vec{\phi}, \nabla_s^3 \vec{\phi} \rangle ]_0^1+ [ \langle \nabla_s \vec{\phi}, \nabla_s^2 \vec{\phi} \rangle ]_0^1
\\ \nonumber
& \qquad +\int_{I} \langle Y, \vec{\phi} \rangle ds - \frac{1}{2} \int_{I} |\vec{\phi}|^{2} \langle \vec{\kappa}, \vec{V} \rangle ds ,
\end{align}
and if furthermore $\vec{\phi}=0$ on $\partial I$ then
\begin{equation}\label{eqgen}
\frac{d}{dt} \frac{1}{2}\int_{I} |\vec{\phi}|^{2} ds + \int_{I} |\nabla_{s}^{2} \vec{\phi}|^{2 } ds = [ \langle \nabla_s \vec{\phi}, \nabla_s^2 \vec{\phi} \rangle ]_0^1 +
\int_{I} \langle Y, \vec{\phi} \rangle ds - \frac{1}{2} \int_{I} |\vec{\phi}|^{2} \langle \vec{\kappa}, \vec{V} \rangle ds.
\end{equation}
\end{lemma}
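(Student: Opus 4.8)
The plan is to compute $\frac{d}{dt}\frac{1}{2}\int_I |\vec{\phi}|^2\,ds$ directly, then use $\nabla_t \vec{\phi} = Y - \nabla_s^4 \vec{\phi}$ and integrate the resulting fourth-order term by parts twice. First I would differentiate under the integral sign, keeping in mind that the arc-length measure itself evolves in time. Since $\partial_t f = \vec{V}$ is purely normal we have $\varphi \equiv 0$, so \eqref{a} gives $\partial_t(ds) = -\langle \vec{\kappa}, \vec{V}\rangle\,ds$, which immediately produces the term $-\frac{1}{2}\int_I |\vec{\phi}|^2 \langle \vec{\kappa}, \vec{V}\rangle\,ds$. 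For the integrand I would invoke \eqref{d}, again with $\varphi \equiv 0$, to write $\partial_t \vec{\phi} = \nabla_t \vec{\phi} - \langle \nabla_s \vec{V}, \vec{\phi}\rangle \tau$; pairing with $\vec{\phi}$ and using that $\vec{\phi}$ is normal, so $\langle \tau, \vec{\phi}\rangle = 0$, the tangential correction drops out and $\langle \partial_t \vec{\phi}, \vec{\phi}\rangle = \langle \nabla_t \vec{\phi}, \vec{\phi}\rangle$. This yields the master identity
\[
\frac{d}{dt}\frac{1}{2}\int_I |\vec{\phi}|^2\,ds = \int_I \langle \nabla_t \vec{\phi}, \vec{\phi}\rangle\,ds - \frac{1}{2}\int_I |\vec{\phi}|^2 \langle \vec{\kappa}, \vec{V}\rangle\,ds
\]
already anticipated in the Introduction.

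The next step is to replace $\nabla_t \vec{\phi}$ by $Y - \nabla_s^4 \vec{\phi}$ and integrate the resulting $-\int_I \langle \nabla_s^4 \vec{\phi}, \vec{\phi}\rangle\,ds$ by parts twice. The tools are the product rule $\partial_s \langle \vec{a}, \vec{b}\rangle = \langle \nabla_s \vec{a}, \vec{b}\rangle + \langle \vec{a}, \nabla_s \vec{b}\rangle$, valid for normal fields $\vec{a}, \vec{b}$ because the tangential part of $\partial_s \vec{a}$ is orthogonal to $\vec{b}$, together with the arc-length fundamental theorem $\int_I \partial_s h\,ds = [h]_0^1$, which follows at once from $\partial_s h\,ds = \partial_x h\,dx$. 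Since $\nabla_s$ maps normal fields to normal fields, I can legitimately apply the product rule first to $\langle \nabla_s^3 \vec{\phi}, \vec{\phi}\rangle$ and then to $\langle \nabla_s^2 \vec{\phi}, \nabla_s \vec{\phi}\rangle$, obtaining
\[
\int_I \langle \nabla_s^4 \vec{\phi}, \vec{\phi}\rangle\,ds = [\langle \nabla_s^3 \vec{\phi}, \vec{\phi}\rangle]_0^1 - [\langle \nabla_s^2 \vec{\phi}, \nabla_s \vec{\phi}\rangle]_0^1 + \int_I |\nabla_s^2 \vec{\phi}|^2\,ds.
\]

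Substituting this back into the master identity produces exactly \eqref{eqgen0}: the interior term $\int_I |\nabla_s^2 \vec{\phi}|^2\,ds$ moves to the left-hand side, while the two boundary contributions reproduce $-[\langle \vec{\phi}, \nabla_s^3 \vec{\phi}\rangle]_0^1 + [\langle \nabla_s \vec{\phi}, \nabla_s^2 \vec{\phi}\rangle]_0^1$ (using the symmetry of $\langle\cdot,\cdot\rangle$). Finally, \eqref{eqgen} follows at once: if $\vec{\phi} = 0$ on $\partial I$, then $[\langle \vec{\phi}, \nabla_s^3 \vec{\phi}\rangle]_0^1$ vanishes. I do not anticipate a genuine obstacle in this lemma; the only point requiring care is the bookkeeping of normal versus tangential components when differentiating in both $t$ and $s$, so that the product rule and the interchange of $\langle\cdot,\cdot\rangle$ with $\nabla_s$ are applied only to normal fields. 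Everything else is a routine double integration by parts.
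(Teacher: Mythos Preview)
Your proof is correct and follows precisely the approach indicated in the paper, which merely says ``the claim follows using \eqref{a} and integration by parts''; you have simply spelled out the details (the use of \eqref{d} to pass from $\partial_t\vec\phi$ to $\nabla_t\vec\phi$, the normal-field product rule $\partial_s\langle\vec a,\vec b\rangle=\langle\nabla_s\vec a,\vec b\rangle+\langle\vec a,\nabla_s\vec b\rangle$, and the two integrations by parts) that the paper leaves to the reader.
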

\begin{proof}
See \cite[Lemma 2.2]{DKS}, \cite[Lemma 3]{Lin} for similar statements. The claim follows  using \eqref{a} and integration by parts.
\end{proof}

Typically the previous lemma is used to get an estimate for the $L^2$-norm of $\vec{\phi}$ squared using Gronwall's Lemma. To this end one first adds $\int_I |\vec{\phi}|^2 ds$ to both sides of the equation \eqref{eqgen0}/\eqref{eqgen}. Then it is necessary to show that
\begin{equation}\label{order}
\int_{I} |\nabla_{s}^{2} \vec{\phi}|^{2 } ds \,  
\end{equation}
together with the energy bound give us means to controll all  terms in the right-hand side of \eqref{eqgen0}/ \eqref{eqgen}. This is achieved by using  interpolation inequalities and the fact that for an appropriate choice of $\vec{\phi}$ the terms in the right-hand side are  in fact of lower order (see the discussion in the Introduction). 
Finally the obtained bounds yield the long-time existence result.

\begin{rem}\emph{As mentioned in the Introduction, in \cite{DKS} the authors consider closed curves and hence there are no boundary terms in \eqref{eqgen0}. In view of the parabolic equation \eqref{e} for the curvature vector, a good and natural choice for $\vec{\phi}$ is  $\nabla_s^{m} \vec{\kappa}$ for $m \in \mathbb{N}$. }
\end{rem}
\begin{rem}
\label{remLin}
\emph{In the case of curves with boundary one needs to take care of the boundary terms. 
In \cite{Lin} the author studies the evolution of \eqref{eqh} subject to the clamped boundary conditions 
\begin{equation}\label{haupt}
 f(t,0)= f_{-}, \, f(t,1)=f_{+}, \quad  \partial_s f(t,0)= \tau_{-}, \, \partial_s f(t,1)= \tau_{+} \, \mbox{ for all }t \, , 
\end{equation}
with $f_{-}, f_{+}, \tau_{-}, \tau_{+}$ fixed given vectors in $\mathbb{R}^n$.
Also in this case Lemma \ref{lembdybeh} holds yielding that $\nabla_t^{m}f=0$, $m \in \mathbb{N}$, at the boundary. Furthermore, the fact that the tangent vectors are given and fixed in time implies that also $$\nabla_s \nabla_t^{m}f =0  \qquad (m \in \mathbb{N})$$ at the boundary. 
Indeed, using \eqref{c}, the fact that the flow has no tangential component and the fact that $\partial_{t} \tau=0$ at the boundary we get that $\nabla_{s}\vec{V} =0$ and hence $\nabla_s \nabla_t f =0$.
Next from \eqref{f} one even infers that at the boundary 
\begin{align}
\label{f**}
\nabla_{s}\nabla_{t} \vec{\phi}=\nabla_{t} \nabla_{s} \vec{\phi}
\end{align}
for \emph{any} normal vector field $\vec{\phi}$. 
Thus we have
$0=\nabla_{t}\nabla_{s}\vec{V}=\nabla_{t}\nabla_{s}\nabla_{t}f = \nabla_{s} \nabla_{t}^{2} f$
at the boundary 
and applying \eqref{f**} repeatedly we obtain the claim.}

\emph{
The idea in \cite{Lin} is  to choose $\vec{\phi}=\nabla_t^m f$ since  both boundary terms in \eqref{eqgen0} disappear.}
\end{rem}

Following the idea of Lin in \cite{Lin} we take $\vec{\phi}=\nabla_t^m f$, $m \geq 1$, in Lemma \ref{lempartint}. Then $\vec{\phi}$ is zero at the boundary by \eqref{bch} and Lemma \ref{lembdybeh}. On the other hand in general none of the derivatives with respect to $s$ of $\vec{\phi}$ vanishes at the boundary. As a consequence, we have to work with Equation \eqref{eqgen}. The fact that the boundary term on the right-hand side of \eqref{eqgen} can also be controlled by \eqref{order} is a consequence of the boundary conditions \eqref{bch}. 
In the next section we present  computations that yield
this result. 


\subsubsection{Boundary term}

In this section we use the following notation 
\begin{equation}\label{psil}
\vec{\psi}^{m}:= \nabla_{t}^{m} f \quad \mbox{ for }\quad m \in \mathbb{N} \,  . 
\end{equation}
As already pointed out  we are going to take $\vec{\phi}=\vec{\psi}^{m}$ in \eqref{eqgen}. Therefore the boundary term reads
\begin{equation}\label{bterm}
[\langle \nabla_s \vec{\psi}^{m}, \nabla_s^2 \vec{\psi}^{m} \rangle ]_0^1\, .
\end{equation}
Due to the boundary condition \eqref{bch} relating the curvature to $\zeta$ and the tangent vector, we will be able to show that
\begin{equation*}
\nabla_s^2 \vec{\psi}^{m} = - \langle \zeta, \tau \rangle \nabla_s \vec{\psi}^{m} + \mbox{  lower order terms} \, ,
\end{equation*}
This observation is crucial to achieve  a control of  \eqref{bterm}  by \eqref{order}.

\smallskip
\underline{Notation for $\vec{R}^{m}_{n}$ and $\vec{S}^{m}_{n}$: } 
it is convenient to introduce two new vector fields.
\begin{itemize}
\item[-]  For $n$ odd: $\vec{R}^{m}_{n}$ denotes any linear combination of terms of the form
\begin{equation*}
\langle \nabla_{s} \vec{\psi}^{i_1}, \nabla_{s} \vec{\psi}^{i_2} \rangle \dots  \langle \nabla_{s} \vec{\psi}^{i_{n-2}}, \nabla_{s} \vec{\psi}^{i_{n-1}} \rangle \nabla_{s} \vec{\psi}^{i_{n}}
\end{equation*}
with $i_1+ \dots +i_{n}=m$, $i_{j} \geq 1$, and coefficients bounded by some universal constants.\newline
\item[-] For $n$ even: $\vec{S}^{m}_{n}$ denotes any linear combination of terms of the form
\begin{equation*}
\langle \zeta, \nabla_{s} \vec{\psi}^{i_1} \rangle \dots  \langle \nabla_{s} \vec{\psi}^{i_{n-2}}, \nabla_{s} \vec{\psi}^{i_{n-1}} \rangle \nabla_{s} \vec{\psi}^{i_{n}}
\end{equation*}
with $i_1+ \dots +i_{n}=m$, $i_{j} \geq 1$, and coefficients bounded by some universal constants. Here  $\zeta$ is as in \eqref{bch}.
\end{itemize}
We start by collecting some relations.

\begin{lemma}\label{lemsteps}
Suppose $\partial_{t}f =\vec{V}$ on $(0,T) \times I$. 
Then for any $m,n,i \in \mathbb{N}$ and $t \in (0,T)$ we have that at the boundary $\vec{\psi}^{i}=0$ and 
\begin{enumerate}
\item[i.] $\partial_{t} \langle \zeta, \tau \rangle = \langle \zeta, \nabla_{s} \vec{\psi}^{1} \rangle$ ;
\item[ii.] $\nabla_{t} [\langle \zeta, \tau \rangle \tau ] =\langle \zeta, \tau \rangle \nabla_{s} \vec{\psi}^{1} $;
\item[iii.] $\partial_{t} \langle \zeta, \nabla_{s} \vec{\psi}^{i} \rangle = \langle \zeta, \nabla_{s} \vec{\psi}^{i+1} \rangle - \langle \zeta, \tau \rangle \langle  \nabla_{s} \vec{\psi}^{1} , \nabla_{s} \vec{\psi}^{i} \rangle$;
\item[iv.]  for $n$ odd, $\nabla_{t} \vec{R}^{m}_{n} = \vec{R}^{m+1}_{n}$;
\item[v.] for $n$ even, $\nabla_{t} \vec{S}^{m}_{n}= \vec{S}^{m+1}_{n} + \langle \zeta, \tau \rangle \vec{R}^{m+1}_{n+1}$;
\item[vi.] for $n$ odd, $\nabla_{t} [\langle \zeta, \tau \rangle \vec{R}_{n}^{m}]= \langle \zeta, \tau \rangle \vec{R}_{n}^{m+1}+ \vec{S}_{n+1}^{m+1}$.
\end{enumerate}
\end{lemma}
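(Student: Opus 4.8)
The plan is to prove all six relations pointwise at the boundary $x \in \{0,1\}$, resting on two preliminary facts available there. First, by Lemma~\ref{lembdybeh} applied to the flow $\partial_t f = \vec{V}$ with fixed endpoints, we have $\vec{\psi}^i = 0$ for every $i \geq 1$ and $\vec{V} = \vec{\psi}^1 = 0$ at the boundary. Second, since the flow carries no tangential part ($\varphi \equiv 0$), formula~\eqref{c} gives $\partial_t \tau = \nabla_s \vec{V} = \nabla_s \vec{\psi}^1$, and formula~\eqref{d} gives, for any normal field $\vec{\phi}$, the splitting $\partial_t \vec{\phi} = \nabla_t \vec{\phi} - \langle \nabla_s \vec{V}, \vec{\phi} \rangle \tau$. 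Relation (i) is then immediate from $\partial_t \langle \zeta, \tau \rangle = \langle \zeta, \partial_t \tau \rangle = \langle \zeta, \nabla_s \vec{\psi}^1 \rangle$. For (ii) I would expand $\partial_t(\langle \zeta, \tau \rangle \tau)$ by the product rule, insert (i) together with $\partial_t \tau = \nabla_s \vec{\psi}^1$, and then project onto the normal bundle: the summand $\langle \zeta, \nabla_s \vec{\psi}^1 \rangle \tau$ is tangential and is discarded, while $\langle \zeta, \tau \rangle \nabla_s \vec{\psi}^1$ is already normal and survives.

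The genuine engine of the lemma is the boundary commutation $\nabla_t \nabla_s \vec{\psi}^i = \nabla_s \nabla_t \vec{\psi}^i = \nabla_s \vec{\psi}^{i+1}$. I would obtain this from the commutator formula~\eqref{f} with $\vec{\phi} = \vec{\psi}^i$ and $\varphi = 0$: evaluated at the boundary every term on the right-hand side carries a factor $\vec{V}$, $\vec{\psi}^i$, or $\langle \cdot, \vec{\psi}^i \rangle$, each of which vanishes there, so the commutator is zero. With this in hand, (iii) follows by writing $\partial_t \langle \zeta, \nabla_s \vec{\psi}^i \rangle = \langle \zeta, \partial_t \nabla_s \vec{\psi}^i \rangle$, applying the splitting from~\eqref{d} to the normal field $\nabla_s \vec{\psi}^i$, and using $\nabla_t \nabla_s \vec{\psi}^i = \nabla_s \vec{\psi}^{i+1}$ and $\nabla_s \vec{V} = \nabla_s \vec{\psi}^1$; the $\tau$-correction from~\eqref{d} produces precisely the term $-\langle \zeta, \tau \rangle \langle \nabla_s \vec{\psi}^1, \nabla_s \vec{\psi}^i \rangle$. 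The same computation records the auxiliary identity $\partial_t \langle \nabla_s \vec{\psi}^i, \nabla_s \vec{\psi}^j \rangle = \langle \nabla_s \vec{\psi}^{i+1}, \nabla_s \vec{\psi}^j \rangle + \langle \nabla_s \vec{\psi}^i, \nabla_s \vec{\psi}^{j+1} \rangle$, where the $\tau$-corrections are killed by the normality of the inner-product factors.

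Relations (iv)–(vi) are then a bookkeeping exercise via the Leibniz rule $\nabla_t(g\vec{w}) = (\partial_t g)\vec{w} + g\,\nabla_t\vec{w}$ for scalar $g$ and normal $\vec{w}$, checking that each differentiation preserves the structural template and raises the total index by one while all indices remain $\geq 1$. For (iv) every differentiated scalar product (using the auxiliary identity) and the differentiated free factor $\nabla_t \nabla_s \vec{\psi}^{i_n} = \nabla_s \vec{\psi}^{i_n+1}$ keep exactly the form $\vec{R}_n$ with total index $m+1$. For (v) the one new phenomenon is that differentiating $\langle \zeta, \nabla_s \vec{\psi}^{i_1} \rangle$ through (iii) yields the extra piece $-\langle \zeta, \tau \rangle \langle \nabla_s \vec{\psi}^1, \nabla_s \vec{\psi}^{i_1} \rangle$, which converts a $\zeta$-pairing into a genuine scalar product and thereby turns an $\vec{S}_n$-term into $\langle \zeta, \tau \rangle$ times an $\vec{R}_{n+1}$-term; a factor count confirms the parity $n+1$ (odd) and the index $m+1$. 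For (vi) I would apply the product rule to $\langle \zeta, \tau \rangle \vec{R}^m_n$, use (i) and (iv), and observe that $\langle \zeta, \nabla_s \vec{\psi}^1 \rangle \vec{R}^m_n$ has exactly one $\zeta$-pairing, $(n-1)/2$ scalar products and one free vector, i.e.\ the template of an $\vec{S}^{m+1}_{n+1}$-term with $n+1$ even.

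The main obstacle is not analytic but combinatorial: the careful template-matching in (iv)–(vi), namely tracking the number of factors, the parity of $n$, the position of the single $\zeta$ and the single free vector, and the increment of $m$. All of this reduces to the boundary commutation relation $\nabla_t \nabla_s \vec{\psi}^i = \nabla_s \vec{\psi}^{i+1}$ and relations (i)–(iii), so once those are secured the remaining verifications are routine.
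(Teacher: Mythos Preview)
Your proposal is correct and follows essentially the same route as the paper: both arguments rest on Lemma~\ref{lembdybeh} to make $\vec{\psi}^i$ and $\vec{V}$ vanish at the boundary, use \eqref{c} and \eqref{d} for (i)--(ii), and invoke the boundary commutation $\nabla_t\nabla_s\vec{\psi}^i=\nabla_s\vec{\psi}^{i+1}$ (which the paper quotes as \eqref{f1} while you re-derive it from \eqref{f}) for (iii); the paper then dismisses (iv)--(vi) with ``the other claims follow similarly,'' whereas you spell out the Leibniz bookkeeping explicitly and correctly.
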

\begin{proof}
By Lemma \ref{lembdybeh}, we know that $\vec{\psi}^i=0$ at the boundary for all $i \in \mathbb{N}$. This in particular implies $\vec{V}=0$ at the boundary. First of all recall that for a vector field $\phi$ and scalar function $f$ we have that
\begin{align*}
\nabla_{t}(f \phi)= \partial_{t} f  (\phi-\langle \phi, \tau \rangle \tau) + f \nabla_{t} \phi.
\end{align*}
Equations \textit{i.} and \textit{ii.} follow from \eqref{c}, the formula above and the equalities $\vec{V}=\partial_{t} f = \vec{\psi}^{1}$. Similarly, using \eqref{f1} and \eqref{c}
\begin{align*}
\partial_{t} \langle \zeta, \nabla_{s} \vec{\psi}^{i} \rangle & = \langle \zeta,  \nabla_{t} \nabla_{s} \vec{\psi}^{i} \rangle + \langle \zeta, \tau \rangle \langle \tau, \partial_{t} \nabla_{s} \vec{\psi}^{i} \rangle \\
 & = \langle \zeta, \nabla_{s} \vec{\psi}^{i+1} \rangle - \langle \zeta, \tau \rangle \langle \nabla_s \vec{\psi}^{1}, \nabla_{s} \vec{\psi}^{i} \rangle
\end{align*}
that is \textit{iii.}. The other claims follow similarly.
\end{proof}

\begin{lemma}[The boundary term]\label{lembterms}
Suppose $\partial_{t}f =\vec{V}$ on $(0,T) \times I$. Then for any $m \in \mathbb{N}$ and $t \in (0,T)$ we have that at the boundary $\vec{\psi}^{m}=0$ and 
\begin{align} \label{cl}
\nabla_{s}^2 \vec{\psi}^{m} & = -\langle \zeta, \tau \rangle \nabla_s \vec{\psi}^{m} + (\zeta -\langle \zeta, \tau \rangle \tau)  \sum_{\substack{i+j=m\\i,j \geq 1}} c^{m}_{i,j}\langle \nabla_{s} \vec{\psi}^{i}, \nabla_{s} \vec{\psi}^{j} \rangle \\ \nonumber
& \quad  + \langle \zeta, \tau \rangle  \sum_{\substack{3 \leq n \leq m\\n \mbox{ \tiny odd}}} \vec{R}_{n}^{m} + \sum_{\substack{2 \leq n \leq m\\n \mbox{ \tiny even}}} \vec{S}_{n}^{m} \, ,
\end{align}
with $c^{m}_{i,j}$ absolute constants.
\end{lemma}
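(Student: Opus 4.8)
The plan is to establish \eqref{cl} by induction on $m$, the engine being repeated time-differentiation of the boundary relation \eqref{bch} combined with the transformation rules collected in Lemma \ref{lemsteps}. Throughout one works at a fixed endpoint $x\in\{0,1\}$, where $\vec{\psi}^{i}=0$ and hence $\vec{V}=\vec{\psi}^{1}=0$ by Lemma \ref{lembdybeh}, and where $\vec{\kappa}=\zeta-\langle\zeta,\tau\rangle\tau$ by \eqref{bch}. The role of \eqref{bch} is exactly to convert derivatives of $\vec{\kappa}$ into the $\vec{R}^{m}_{n}$/$\vec{S}^{m}_{n}$ structures.

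For the base case $m=1$ I would evaluate the curvature transport \eqref{e} at the boundary: since $\varphi\equiv 0$ and $\vec{V}=0$ there, it collapses to $\nabla_{t}\vec{\kappa}=\nabla_{s}^{2}\vec{V}=\nabla_{s}^{2}\vec{\psi}^{1}$. On the other hand, differentiating the boundary identity $\vec{\kappa}=\zeta-\langle\zeta,\tau\rangle\tau$ in time and taking the normal part, using $\nabla_{t}\zeta=0$ and Lemma \ref{lemsteps}.\textit{ii}, gives $\nabla_{t}\vec{\kappa}=-\langle\zeta,\tau\rangle\nabla_{s}\vec{\psi}^{1}$. Comparing the two expressions yields $\nabla_{s}^{2}\vec{\psi}^{1}=-\langle\zeta,\tau\rangle\nabla_{s}\vec{\psi}^{1}$, which is \eqref{cl} for $m=1$ (all sums over $i+j$ and over $n$ being empty).

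For the inductive step, assuming \eqref{cl} at level $m$, I would write $\vec{\psi}^{m+1}=\nabla_{t}\vec{\psi}^{m}$ and commute the spatial second derivative past $\nabla_{t}$ using \eqref{g1}, obtaining $\nabla_{s}^{2}\vec{\psi}^{m+1}=\nabla_{t}(\nabla_{s}^{2}\vec{\psi}^{m})-E$, where $E$ is the bracketed correction in \eqref{g1} with $\vec{\phi}=\vec{\psi}^{m}$ and $\vec{V}=\vec{\psi}^{1}$. Using $\vec{\kappa}=\zeta-\langle\zeta,\tau\rangle\tau$ and the normality $\langle\tau,\nabla_{s}\vec{\psi}^{i}\rangle=0$, the correction $E$ collapses to terms of type $\vec{S}^{m+1}_{2}$ together with one contribution of the shape $(\zeta-\langle\zeta,\tau\rangle\tau)\,\langle\nabla_{s}\vec{\psi}^{1},\nabla_{s}\vec{\psi}^{m}\rangle$. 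It then remains to apply $\nabla_{t}$ to the four groups of terms in the inductive hypothesis: the leading term $-\langle\zeta,\tau\rangle\nabla_{s}\vec{\psi}^{m}$ produces, via Lemma \ref{lemsteps}.\textit{i} and the boundary commutation \eqref{f1}, the new leading term $-\langle\zeta,\tau\rangle\nabla_{s}\vec{\psi}^{m+1}$ plus a $\vec{S}^{m+1}_{2}$ term; the $(\zeta-\langle\zeta,\tau\rangle\tau)\times(\text{scalar})$ term reproduces a term of the same shape at level $m+1$ (from the time-derivative of the scalar coefficient) plus a $\langle\zeta,\tau\rangle\vec{R}^{m+1}_{3}$ term (from Lemma \ref{lemsteps}.\textit{ii}); and the $\langle\zeta,\tau\rangle\vec{R}^{m}_{n}$ and $\vec{S}^{m}_{n}$ terms transform by rules \textit{vi} and \textit{v} of Lemma \ref{lemsteps}, which precisely toggle the parity of $n$ between the $\vec{R}$- and $\vec{S}$-families and raise the upper index by one. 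Collecting all contributions by type, together with $-E$, and checking the index ranges then yields \eqref{cl} at level $m+1$ with universal coefficients $c^{m+1}_{i,j}$.

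The main obstacle is not any single identity but the combinatorial bookkeeping: one must verify that every term generated by $\nabla_{t}$ and by the correction $E$ lands in exactly one of the prescribed families, with $n$ of the correct parity and within the admissible range ($3\le n\le m+1$ odd for $\langle\zeta,\tau\rangle\vec{R}^{m+1}_{n}$, and $2\le n\le m+1$ even for $\vec{S}^{m+1}_{n}$), and that the coefficients stay universal. The structural facts that make the accounting close are the boundary identity $\vec{\kappa}=\zeta-\langle\zeta,\tau\rangle\tau$, the normality relations, the commutation \eqref{f1}, and the parity-shifting transformation rules \textit{iv}--\textit{vi} of Lemma \ref{lemsteps}.
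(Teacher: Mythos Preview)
Your proposal is correct and follows essentially the same approach as the paper: the same base case via \eqref{e} and Lemma~\ref{lemsteps}.\textit{ii}, the same inductive step via the commutator \eqref{g1} (your correction $E$ is precisely the paper's equation \eqref{compm}), and the same term-by-term application of $\nabla_{t}$ to the inductive hypothesis using \eqref{f1} and rules \textit{i}, \textit{ii}, \textit{iv}--\textit{vi} of Lemma~\ref{lemsteps}. The bookkeeping you flag as the main obstacle is exactly what the paper carries out explicitly in its displayed computation.
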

\begin{proof}
By Lemma \ref{lembdybeh}, we know that $\vec{\psi}^m=0$ at the boundary for all $m \in \mathbb{N}$. This in particular implies $\vec{V}=0$ at the boundary. In this case and at the boundary, we may write \eqref{g1} with the notation just introduced as follows
\begin{align}\nonumber
\nabla_{s}^2 \vec{\psi}^{m+1}  &  = \nabla_{t}\nabla_{s}^2 \vec{\psi}^{m} - \langle \zeta, \nabla_s \vec{\psi}^{1} \rangle \nabla_{s} \vec{\psi}^{m}\\ \nonumber
& \quad  -  2 \langle \zeta, \nabla_s \vec{\psi}^{m}\rangle \nabla_{s} \vec{\psi}^{1} + 2 \langle \nabla_{s} \vec{\psi}^{1}, \nabla_s \vec{\psi}^{m} \rangle (\zeta-\langle \zeta, \tau \rangle \tau) \\ \label{compm}
& =  \nabla_{t}\nabla_{s}^2 \vec{\psi}^{m} + \vec{S}^{m+1}_{2} - 2   (\zeta-\langle \zeta, \tau \rangle \tau)  \langle \nabla_{s} \vec{\psi}^{1}, \nabla_s \vec{\psi}^{m} \rangle .
\end{align}
Here we have used the boundary conditions \eqref{bch} and the fact that $\vec{V}=\partial_{t} f = \vec{\psi}^{1}$.

We prove \eqref{cl} by induction. Since $\vec{V}=0$ at the boundary, for $m=1$ we have with $\vec{V}= \vec{\psi}^{1}$, \eqref{e}, \eqref{bch} and \textit{ii.} in Lemma \ref{lemsteps}
\begin{align*}
\nabla_s^2 \vec{\psi}^1 & = \nabla_s^2 \vec{V}= \nabla_{t} \vec{\kappa} = \nabla_{t} \left(\zeta -\langle \zeta, \tau \rangle \tau \right) =    -\langle \zeta, \tau \rangle \nabla_{s} \vec{\psi}^1 \, ,
\end{align*}
that is \eqref{cl} in the special case $m=1$. 

Assuming that \eqref{cl} is valid for $m \geq 1$, we find using $\vec{V}=0$ at the boundary, \eqref{compm}, \eqref{f1} and Lemma \ref{lemsteps}
\begin{align*}
& \nabla_{s}^2 \vec{\psi}^{m+1} \\
& =  \nabla_{t} [ -\langle \zeta, \tau \rangle \nabla_s \vec{\psi}^{m} + (\zeta -\langle \zeta, \tau \rangle \tau)  \sum_{\substack{i+j=m\\i,j \geq 1}} c^{m}_{i,j}\langle \nabla_{s} \vec{\psi}^{i}, \nabla_{s} \vec{\psi}^{j} \rangle \\
&\quad  + \langle \zeta, \tau \rangle  \sum_{\substack{3 \leq n \leq m\\n \mbox{ \tiny odd}}} \vec{R}_{n}^{m} + \sum_{\substack{2 \leq n \leq m\\n \mbox{ \tiny even}}} \vec{S}_{n}^{m} ] + \vec{S}^{m+1}_{2} - 2   (\zeta-\langle \zeta, \tau \rangle \tau)  \langle \nabla_{s} \vec{\psi}^{1}, \nabla_s \vec{\psi}^{m} \rangle \\
& = -\langle \zeta, \tau \rangle \nabla_s \vec{\psi}^{m+1} -\langle \zeta, \nabla_s \vec{\psi}^{1} \rangle \nabla_s \vec{\psi}^{m} + (\zeta -\langle \zeta, \tau \rangle \tau)  \sum_{\substack{i+j=m\\i,j \geq 1}} c^{m}_{i,j} \partial_{t} \langle \nabla_{s} \vec{\psi}^{i}, \nabla_{s} \vec{\psi}^{j} \rangle \\
&\quad -\langle \zeta, \tau \rangle \nabla_s \vec{\psi}^{1}
\sum_{\substack{i+j=m\\i,j \geq 1}} c^{m}_{i,j}\langle \nabla_{s}
\vec{\psi}^{i}, \nabla_{s} \vec{\psi}^{j} \rangle  + \langle \zeta, \tau
\rangle  \sum_{\substack{3 \leq n \leq m\\n \mbox{ \tiny odd}}}
\vec{R}_{n}^{m+1} \\
& \quad  + \langle \zeta, \nabla_s \vec{\psi}^{1} \rangle  \sum_{\substack{3 \leq n \leq m\\n \mbox{ \tiny odd}}} \vec{R}_{n}^{m}+ \sum_{\substack{2 \leq n \leq m\\n \mbox{ \tiny even}}} \vec{S}_{n}^{m+1}  + \langle \zeta, \tau \rangle \sum_{\substack{2 \leq n \leq m\\n \mbox{ \tiny even}}} \vec{R}_{n+1}^{m+1} \\
& \quad  + \vec{S}^{m+1}_{2} - 2   (\zeta-\langle \zeta, \tau \rangle \tau)  \langle \nabla_{s} \vec{\psi}^{1}, \nabla_s \vec{\psi}^{m} \rangle\\
& = -\langle \zeta, \tau \rangle \nabla_s \vec{\psi}^{m+1}  + (\zeta -\langle \zeta, \tau \rangle \tau)  \sum_{\substack{i+j=m+1\\i,j \geq 1}} c^{m+1}_{i,j} \langle \nabla_{s} \vec{\psi}^{i}, \nabla_{s} \vec{\psi}^{j} \rangle \\
& \quad  + \sum_{\substack{2 \leq n \leq m+1\\n \mbox{ \tiny even}}} \vec{S}_{n}^{m+1}  + \langle \zeta, \tau \rangle \sum_{\substack{3 \leq n \leq m+1\\n \mbox{ \tiny odd}}} \vec{R}_{n}^{m+1} \, .
\end{align*}
\end{proof}


\section{A technical Lemma}
\label{Sez3}

In this section we derive the equation satisfied by $\nabla_t^{m} f$, $m \in \mathbb{N}$, so that we will be able to infer that  the term
$$\nabla_t (\nabla_t^{m} f)+ \nabla_s^4 \nabla_t^{m} f = Y \, , $$
in Lemma \ref{lempartint} (with $\vec{\phi}=\nabla^{m}_{t}f$) contains lower order terms than $\nabla_t(\nabla_t^m f)$. 

The results presented in the following Lemma \ref{lemLin} can essentially be found in \cite[Lemma 8]{Lin}. However we present here full proofs 
 for sake of completeness and also because we use a different notation that provides  more information than the one used in \cite{Lin}. This extra information is also crucial for the clarity and transparency  of some steps in the final proof of long-time existence.
 
The equation satisfied by $\nabla_t^{m} f$  can be derived by repeatedly differentiating equation \eqref{eqh} and by interchanging the operators $\nabla_s$ and $\nabla_t$. This generates extremely many terms (recall \eqref{f}). Similarly to \cite{Lin}, our strategy in the representation of the equations is  to single out the most singular term and to introduce a notation that takes care of all remaining ones. 
In addition it should be 
immediately clear: the number of derivatives present, the number of factors present and the maximal number of derivatives falling on one factor.

As in \cite{DKS}, for normal vector fields $\vec{\phi}_1, \dots, \vec{\phi}_k$, the product $\vec{\phi}_1 * \dots * \vec{\phi}_k$ defines for even $k$ a function  given by
$$\langle \vec{\phi}_1,  \vec{\phi}_2  \rangle \dots \langle \vec{\phi}_{k-1}, \vec{\phi}_{k} \rangle \, ,$$
while for $k $ odd it defines a normal vector field 
$$\langle \vec{\phi}_1,  \vec{\phi}_2  \rangle \dots \langle \vec{\phi}_{k-2}, \vec{\phi}_{k-1} \rangle \vec{\phi}_{k} \, .$$

For $\vec{\phi}$ a normal vector field, $P_{b}^{a,c}(\vec{\phi})$ denotes any linear combination of terms of  type 
$$\nabla_{s}^{i_1} \vec{\phi} * \dots * \nabla_{s}^{i_{b}} \vec{\phi} \mbox{ with }i_1 + \dots +i_{b}= a \mbox{ and } \max i_{j} \leq c \, ,$$
with coefficients bounded by some universal constant. Notice that $a$ gives the total number of derivatives, $b$ gives the number of factors and $c$ gives the highest number of derivatives falling on one factor. Comparing our notation with the one in \cite{Lin} one notices that we have added the parameter $c$. Furthermore,  for  sums over $a$, $b$ and $c$ we set
\begin{equation*}
\sum_{\substack{[[a,b]] \leq [[A,B]]\\c\leq C}} P^{a,c}_{b} (\vec{\phi}) : = \sum_{a=0}^{A} \sum_{b=1}^{2A+B-2a} \sum_{c=0}^{C}  P^{a,c}_{b}(\vec{\phi}) \, .
\end{equation*}
(The range of the $b$'s will also be often specified at the bottom of the  sum sign.) 

It is important to understand the relation between $a$ and $b$ in the sum: the more derivatives we take the less factors are present. In the other direction: if we take one derivative less we may allow for two factors more. This relation has its origin in the equation that $f$ satisfies. Indeed \eqref{eqh} may be written as
\begin{eqnarray*}
\partial_t f = - \nabla_s^2 \vec{\kappa}- \frac12 |\vec{\kappa}|^2 \vec{\kappa} + \lambda \vec{\kappa} = \sum_{\substack{[[a,b]] \leq [[2,1]]\\c\leq 2}} P^{a,c}_{b} (\vec{\kappa}) \, +\lambda P^{0,0}_{1} (\vec{\kappa}).
\end{eqnarray*}
This structure is maintained in the equations obtained by differentiation. Moreover it is important to keep track of this relation for the application of the interpolation inequalities. In particular notice that for all the terms in the sum, one has
\begin{equation}
\label{rel-ab}
a+ \frac12 b \leq a + \frac12 (2A +B -2 a) = A + \frac12 B . 
\end{equation}

In the following lemma we collect the formulas needed. 
\begin{lemma}\label{lemLin}
Suppose $f: [0,T) \times \bar{I} \rightarrow \mathbb{R}^n$ is a smooth regular solution to
\begin{equation*}
\partial_{t} f= -\nabla_s^2 \vec{\kappa} -\frac12 |\vec{\kappa}|^2 \vec{\kappa} + \lambda \vec{\kappa} = \vec{V} \, 
\end{equation*}
in $(0,T) \times I$. Then, the following formulas hold on $(0,T) \times I$. 
\begin{enumerate}
\item  For any $l \in \mathbb{N}_{0}$ and $k \in \mathbb{N}$
\begin{align}\label{E1}
\left[ \nabla_{t} \nabla_{s}^{k}  -\nabla_{s}^{k} \nabla_{t} \right] \nabla^{l}_{s}\vec{\kappa} & = \sum_{\substack{[[a,b]] \leq [[l+k+2,3]]\\c\leq \max\{l,2\}+k\\b \in [3,5], odd}} P^{a,c}_{b} (\vec{\kappa})  + \lambda
\sum_{\substack{[[a,b]] \leq [[l+k,3]]\\ c \leq l+k\\b=3}} P^{a,c}_{b} (\vec{\kappa}) \, .
\end{align}
\item For any $m, \nu \in \mathbb{N}$, $\nu$ odd, and $\mu, d \in \mathbb{N}_0$
\begin{align}\label{E2}
\nabla_{t}^{m} P^{\mu,d}_{\nu} (\vec{\kappa}) = \sum_{i=0}^{m} \lambda^{i} \sum_{\substack{[[a,b]] \leq [[4m + \mu -2 i,\nu]]\\c\leq 4 m -2i + d\\b \in [\nu,\nu+4m-2i], odd}} P^{a,c}_{b} (\vec{\kappa})  \, .
\end{align}
\item For any $A, C \in \mathbb{N}_0$, $B, N, M \in \mathbb{N}$, $B$ odd,
\begin{align}\label{E2sum}
\nabla_{t} \sum_{\substack{[[a,b]]\leq [[A,B]]\\c\leq C\\b\in [N,M], odd}}P^{a,c}_{b} (\vec{\kappa}) = \sum_{\substack{[[a,b]] \leq [[A +4,B]]\\c\leq C+4\\b \in [N,M+4], odd}} P^{a,c}_{b} (\vec{\kappa}) + \lambda \sum_{\substack{[[a,b]] \leq [[A +2,B]]\\c\leq C+2\\b \in [N,M+2], odd}} P^{a,c}_{b} (\vec{\kappa})  \, .
\end{align}
\item For any $m \in \mathbb{N}$
\begin{align}\label{E3}
& \nabla_{t}^{m} \vec{\kappa}- (-1)^m \nabla_s^{4m} \vec{\kappa} \\ \nonumber 
& \quad = \sum_{\substack{[[a,b]] \leq [[4m -2,3]]\\c\leq 4 m -2\\b\in[3,4m+1],odd}}
P^{a,c}_{b} (\vec{\kappa}) + \sum_{i=1}^{m} \lambda^{i} \sum_{\substack{[[a,b]] \leq [[4m -2i,1]]\\c\leq 4 m-2i \\b\in[1,4m+1-2i],odd}} P^{a,c}_{b} (\vec{\kappa}) \, .
\end{align}
\item For any $m,k \in \mathbb{N}$ and $l \in \mathbb{N}_0$
\begin{align}\label{E4}
& [\nabla_{t}^{m} \nabla_s^k -  \nabla_s^k \nabla_{t}^{m}] \nabla_{s}^{l}
\vec{\kappa} \\ \nonumber 
& \quad =  \sum_{\substack{[[a,b]] \leq [[4m+k+l -2,3]]\\c\leq 4 m
    +l+k-2\\b\in[3,4m+1],odd}} P^{a,c}_{b} (\vec{\kappa})  + \sum_{i=1}^{m} \lambda^{i} \sum_{\substack{[[a,b]] \leq [[4m+k+l-2i,1]]\\c\leq 4 m +l+k-2i\\b\in[1,4m-2i+1],odd}} P^{a,c}_{b} (\vec{\kappa}) \, .
\end{align}
\item For any $m \in \mathbb{N}$
\begin{align}\label{E5}
& \nabla_{t}^{m} f -  (-1)^{m} \nabla_s^{4m-2} \vec{\kappa}  \\ \nonumber
& \quad = \sum_{\substack{[[a,b]] \leq [[4m -4,3]]\\c\leq 4 m -4\\b\in[3, 4m-1],odd}}
P^{a,c}_{b} (\vec{\kappa})
+ \quad \sum_{i=1}^{m} \lambda^{i} \sum_{\substack{[[a,b]] \leq [[4m -2-2i,1]]\\c\leq 4 m -2-2i\\b\in[1, 4m-1-2i],odd}} P^{a,c}_{b} (\vec{\kappa}) \, .
\end{align}
\end{enumerate}
\end{lemma}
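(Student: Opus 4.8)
The plan is to derive all six identities by a single cascade of inductions whose only genuine inputs are the commutator rule \eqref{f} and the evolution equation \eqref{e} for $\vec{\kappa}$, specialised to our flow where $\varphi\equiv0$. In this case \eqref{f} reduces to $(\nabla_t\nabla_s-\nabla_s\nabla_t)\vec{\phi}=\langle\vec{\kappa},\vec{V}\rangle\nabla_s\vec{\phi}+\langle\vec{\kappa},\vec{\phi}\rangle\nabla_s\vec{V}-\langle\nabla_s\vec{V},\vec{\phi}\rangle\vec{\kappa}$ and \eqref{e} reads $\nabla_t\vec{\kappa}=\nabla_s^2\vec{V}+\langle\vec{\kappa},\vec{V}\rangle\vec{\kappa}$. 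Substituting the flow \eqref{eqh}, i.e. $\vec{V}=\sum_{[[a,b]]\leq[[2,1]],\,c\leq2}P^{a,c}_{b}(\vec{\kappa})+\lambda P^{0,0}_{1}(\vec{\kappa})$, converts every appearance of $\vec{V}$ and its $\nabla_s$-derivatives into $P$-terms with a controlled index budget, and in particular $\nabla_t\vec{\kappa}=-\nabla_s^4\vec{\kappa}+(\text{lower order})$. I would prove the formulas in the order \eqref{E1}, \eqref{E2sum}, \eqref{E2}, \eqref{E3}, \eqref{E4}, \eqref{E5}, each feeding into the next.

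For \eqref{E1} I would induct on $k$. The base case $k=1$ is \eqref{f} applied to $\vec{\phi}=\nabla_s^l\vec{\kappa}$: using $\nabla_s\vec{V}=-\nabla_s^3\vec{\kappa}+\dots$ the three terms produced are three- to five-factor $P$-terms with total order at most $l+3$ and $\lambda$-contributions of order $l+1$, matching $[[l+3,3]]$ and the $\lambda$-sum at $b=3$; the bound on $c$ is read off from the explicit top-order factor. The inductive step commutes one more $\nabla_s$ past $\nabla_t$ and $\nabla_s$-differentiates the $P$-terms already present, each such derivative raising $a$ and $c$ by one. For \eqref{E2sum} I would apply $\nabla_t$ factorwise (the normal connection is metric, so $\nabla_t$ satisfies the Leibniz rule across the $*$-product) and write $\nabla_t\nabla_s^{i}\vec{\kappa}=\nabla_s^{i}\nabla_t\vec{\kappa}+C_i$, where $C_i$ is the commutator supplied by \eqref{E1}; inserting $\nabla_t\vec{\kappa}=-\nabla_s^4\vec{\kappa}+\dots$ shows that acting on one factor adds four derivatives (or two, with a factor $\lambda$) and raises the factor count by at most four (two in the $\lambda$-branch), which is exactly the passage from $[[A,B]]$ to $[[A+4,B]]$ together with the $\lambda[[A+2,B]]$ branch. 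Formula \eqref{E2} then follows by iterating \eqref{E2sum} $m$ times, the power $\lambda^i$ recording the number of steps taken in the $\lambda$-branch and producing the $4m-2i$ bookkeeping in the exponents.

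Formula \eqref{E3} I would prove by induction on $m$: the case $m=1$ is $\nabla_t\vec{\kappa}=\nabla_s^2\vec{V}+\langle\vec{\kappa},\vec{V}\rangle\vec{\kappa}=-\nabla_s^4\vec{\kappa}+\dots$ read off from \eqref{e} and \eqref{eqh}. In the step I split $\nabla_t^{m+1}\vec{\kappa}=(-1)^m\nabla_t\nabla_s^{4m}\vec{\kappa}+\nabla_t R_m$, treat $\nabla_t\nabla_s^{4m}\vec{\kappa}=\nabla_s^{4m}\nabla_t\vec{\kappa}+(\text{commutator from }\eqref{E1})$ with $\nabla_s^{4m}\nabla_t\vec{\kappa}=-\nabla_s^{4m+4}\vec{\kappa}+\dots$, and differentiate the remainder $R_m$ by means of \eqref{E2sum}. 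The general commutator \eqref{E4} follows by the same peeling argument, moving the $m$ copies of $\nabla_t$ across $\nabla_s^k$ one at a time with \eqref{E1} and substituting \eqref{E3} and \eqref{E2} for the pure time-derivatives that arise. Finally \eqref{E5} comes from $\nabla_t^m f=\nabla_t^{m-1}\vec{V}$ (since $\partial_t f=\vec{V}$ is normal, $\nabla_t f=\vec{V}$), expanding $\vec{V}$ by \eqref{eqh} and applying \eqref{E3}, \eqref{E4} and \eqref{E2} to each summand; the leading piece $\nabla_t^{m-1}(-\nabla_s^2\vec{\kappa})=-\nabla_s^2\nabla_t^{m-1}\vec{\kappa}+\dots=(-1)^m\nabla_s^{4m-2}\vec{\kappa}+\dots$ yields the announced top-order term.

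The conceptual input is therefore modest — every identity is generated by \eqref{f}, \eqref{e} and the flow — and I expect the only real difficulty to be the combinatorial bookkeeping of the triple index $(a,b,c)$ together with the powers of $\lambda$. At each commutation and each Leibniz expansion one must check that the total order $a$, the number of factors $b$, and the maximal order $c$ on a single factor stay within the stated ranges, that every $\lambda$ is genuinely paired with a drop of two derivatives, and, crucially, that the balance \eqref{rel-ab} between $a$ and $b$ is preserved, since this is what makes the later interpolation estimates work. The tracking of $c$, which is absent in \cite{Lin}, needs the most care, as it must be followed separately through the $\nabla_s$-differentiation of existing $P$-terms and through the insertion of the high-order factors coming from $\nabla_s^{\,\cdot}\vec{V}$.
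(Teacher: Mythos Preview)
Your proposal is correct and follows essentially the same route as the paper's proof in Appendix~\ref{AppLin}: induction on $k$ for \eqref{E1} starting from \eqref{f}, Leibniz plus the $m=1$ formula (the paper's \eqref{riemcen1}) for \eqref{E2}/\eqref{E2sum}, and induction on $m$ for \eqref{E3}, \eqref{E4}, \eqref{E5} using the previously established identities. The only cosmetic differences are that the paper derives the single-step formula \eqref{riemcen1} inside the proof of \eqref{E2} and then reads off \eqref{E2sum}, whereas you do \eqref{E2sum} first and iterate, and the paper proves \eqref{E5} by a direct induction on $m$ rather than via $\nabla_t^{m}f=\nabla_t^{m-1}\vec{V}$; both organisations lead to the same computations.
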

\begin{proof}
See Appendix \ref{AppLin}.
\end{proof}

In the previous lemma we have chosen to express explicitely the dependence in $\lambda$ in the equations. This was not done in \cite[Lemma 8]{Lin} and could be useful for studying the flow  with a fixed length constraint.


\subsection{Estimates for some boundary terms}
\label{Sez3.1}

It is convenient here to collect some estimates on some boundary terms. In the following, $|P^{a,c}_b(\vec{\kappa})|$ denotes any linear combination with non-negative coefficients of terms of  type 
$$|\nabla_{s}^{i_1} \vec{\phi}|\cdot |\nabla_{s}^{i_2} \vec{\phi}| \cdot ... \cdot | \nabla_{s}^{i_{b}} \vec{\phi}| \mbox{ with }i_1 + \dots +i_{b}= a \mbox{ and } \max i_{j} \leq c \, .$$

\begin{lemma}
\label{hilfslemmabdry}
For $m \geq 1 $ we have that at the boundary ($x \in \partial I$)
\begin{align}\label{epsbdyht0}
|\nabla_s \nabla_t^m f|^2 (x) \leq \int_I \sum_{i=0}^{2m} \lambda^{i} \sum_{\substack{[[a,b]] \leq [[8m-1-2i,2]]\\c\leq 4m\\b\in[2,8m+1-2i]}} |P^{a,c}_{b} (\vec{\kappa})| ds,
\end{align}
and also for all $\epsilon>0$ 
\begin{align}\label{epsbdyht}
|\nabla_s \nabla_t^m f|^2 (x) & \leq \epsilon \int_I |\nabla_s^2 \nabla_t^{m}
f|^2 ds \\ \nonumber
& \quad + C(\epsilon) \int_I \sum_{\substack{i=0\\i \ even}}^{2m} \lambda^{i} \sum_{\substack{[[a,b]] \leq [[8m-2-2i,4]]\\c\leq 4m-1-i\\b\in[2,8m+2-2i]}} |P^{a,c}_{b} (\vec{\kappa})| ds.
\end{align}
\end{lemma}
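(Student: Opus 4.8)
The plan is to reduce the pointwise boundary quantity to integrals over $I$ by the fundamental theorem of calculus, and then to translate everything into the $\vec{\kappa}$--language of Lemma \ref{lemLin}. Set $h := |\nabla_s\nabla_t^m f|^2 = |\nabla_s \vec{\psi}^m|^2 \geq 0$. Since the curve joins the fixed distinct points $f_-, f_+$, its length $L$ satisfies $L \geq |f_+ - f_-| > 0$ for every $t$. Writing $h(x_0) = h(s) - \int$ (the FTC in arclength) for a boundary point $x_0$ and averaging in $s$ along the curve, I would obtain
\[
h(x_0) \leq \frac{1}{L}\int_I h\, ds + \int_I |\partial_s h|\, ds \leq C\Big(\int_I |\nabla_s \vec{\psi}^m|^2\, ds + \int_I |\nabla_s^2 \vec{\psi}^m|\,|\nabla_s \vec{\psi}^m|\, ds\Big),
\]
using $\partial_s h = 2\langle \nabla_s^2 \vec{\psi}^m, \nabla_s \vec{\psi}^m\rangle$ (the tangential part drops since $\nabla_s\vec{\psi}^m$ is normal) and $L^{-1}\leq |f_+-f_-|^{-1}$. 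The constant $C$ is absorbed into the universal constants hidden in the $|P^{a,c}_b|$--notation.

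For \eqref{epsbdyht0} I would insert the expansions coming from \eqref{E5}. Applying $\nabla_s$ once and twice, and using that $\nabla_s$ acting on a $P$--term raises the derivative count and the maximal per-factor count each by one, gives $\nabla_s \vec{\psi}^m = (-1)^m \nabla_s^{4m-1}\vec{\kappa} + (\text{$P$-sums})$ and $\nabla_s^2 \vec{\psi}^m = (-1)^m \nabla_s^{4m}\vec{\kappa} + (\text{$P$-sums})$, where every factor carries at most $4m$ derivatives and the powers of $\lambda$ are at most $m$. Multiplying these out, each summand of $|\nabla_s\vec{\psi}^m|^2$ and of $|\nabla_s^2\vec{\psi}^m|\,|\nabla_s\vec{\psi}^m|$ is again of $|P|$--type with the factor counts adding, so the proof reduces to index bookkeeping. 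The leading contribution $|\nabla_s^{4m}\vec{\kappa}|\,|\nabla_s^{4m-1}\vec{\kappa}|$ gives $a=8m-1$, $b=2$, $c=4m$, $i=0$; a product of a $\lambda^{j_1}$-- and a $\lambda^{j_2}$--factor gives $i=j_1+j_2\leq 2m$; and, crucially, the maximal derivative count carried by the $\lambda^j$--part of each expansion forces $a \leq 8m-1-2i$, whence $2a+b \leq 2(8m-1-2i)+2$, i.e. exactly $[[a,b]]\leq [[8m-1-2i,2]]$. This establishes \eqref{epsbdyht0}.

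For \eqref{epsbdyht} I keep $\nabla_s^2\vec{\psi}^m$ \emph{un-expanded} and apply Young's inequality to the mixed integrand, $2|\nabla_s^2\vec{\psi}^m|\,|\nabla_s\vec{\psi}^m| \leq \epsilon |\nabla_s^2\vec{\psi}^m|^2 + C(\epsilon)|\nabla_s\vec{\psi}^m|^2$, which produces precisely the term $\epsilon\int_I |\nabla_s^2\nabla_t^m f|^2\, ds$. It then remains to expand $C(\epsilon)\int_I |\nabla_s\vec{\psi}^m|^2\, ds$ through \eqref{E5}. Writing $\nabla_s\vec{\psi}^m$ as its leading term plus $\lambda$--sums, I would symmetrise each cross term by the weighted Young inequality $\lambda^{k} a\,b \leq \tfrac12 a^2 + \tfrac12 \lambda^{2k} b^2$ (and $\lambda^{k_1+k_2}a\,b \leq \tfrac12\lambda^{2k_1}a^2 + \tfrac12\lambda^{2k_2}b^2$ for two $\lambda$--factors). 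This is the decisive point: it forces every surviving power of $\lambda$ to be even, and since a factor of $\nabla_s\vec{\psi}^m$ carrying $\lambda^{k}$ has at most $4m-1-2k$ derivatives on any component, squaring it yields $c \leq 4m-1-2k$ with even power $i=2k$, i.e. exactly $c \leq 4m-1-i$. Tracking $a,b$ as before gives $a \leq 8m-2-2i$ and $b$ even with $b\in[2,8m+2-2i]$, which is \eqref{epsbdyht}.

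The FTC step and the two Young inequalities are routine. The main obstacle is the combinatorial bookkeeping of the four indices $(a,b,c,i)$ through the substitutions of \eqref{E5} and the products — in particular arranging the weighted Young splitting so that the $\lambda$--exponents become even \emph{while simultaneously} lowering the maximal per-factor derivative count from $4m$ to $4m-1-i$, which is what distinguishes \eqref{epsbdyht} from \eqref{epsbdyht0} and is exactly the form needed for the subsequent interpolation estimates.
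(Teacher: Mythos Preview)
Your approach is correct and parallels the paper's---FTC to reduce the boundary value to an integral, expansion via \eqref{E5}, then Young/Cauchy--Schwarz to force even $\lambda$-powers in \eqref{epsbdyht}---but with two differences worth noting. First, you work entirely with normal derivatives $\nabla_s,\nabla_s^2$, exploiting the clean identity $\partial_s|\nabla_s\vec{\psi}^m|^2=2\langle\nabla_s^2\vec{\psi}^m,\nabla_s\vec{\psi}^m\rangle$; the paper instead passes through the \emph{full} derivatives $\partial_s,\partial_s^2$ and converts back via $\partial_s g=\nabla_s g-\langle g,\vec{\kappa}\rangle\tau$ and the analogous second-order formula, which is slightly longer but otherwise equivalent. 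Second---and this is a small gap in your version---your FTC-plus-averaging step produces the factor $L^{-1}$, which you claim is absorbed into the universal constants of the $|P^{a,c}_b|$-notation; it cannot be, since those coefficients are by definition curve-independent. The paper sidesteps this entirely: because $\nabla_t^m f$ vanishes at \emph{both} endpoints, Rolle's theorem gives each component of $\partial_s(\nabla_t^m f)$ an interior zero $x_i^{*}$, and then the pure FTC bound
\[
|\nabla_s\nabla_t^m f|^2(x)\leq |\partial_s\nabla_t^m f|^2(x)\leq 2\int_I|\partial_s\nabla_t^m f|\,|\partial_s^2\nabla_t^m f|\,ds
\]
holds with no averaging term and hence with genuinely universal constants. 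For the applications in Section~\ref{Sez5} (where all constants are ultimately allowed to depend on $f_\pm$) your version would suffice, but the lemma as stated needs the Rolle argument.
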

\begin{proof}
Since $\nabla_{t}^{m}f=0$ at the boundary, for each space component $[\nabla_{t}^{m}f]^{i}$ there exists some $x_i^{*} \in I$ such that $\partial_s [\nabla_{t}^{m}f]^{i} (x_i^{*})=0$. Hence
\begin{align}
|\nabla_s \nabla_{t}^{m}f|^2 (x) &  \leq |\partial_s \nabla_{t}^{m}f|^2 (x)= \sum_{i=1}^{n} [(\partial_s [\nabla_{t}^{m}f]^{i})^2 (x) - (\partial_s [\nabla_{t}^{m}f]^{i})^2 (x_i^{*}) ] \nonumber \\
&\leq 2 \int_{I} |\partial_s  \nabla_{t}^{m}f| | \partial_s^2  \nabla_{t}^{m}f| \, ds \, . \label{smily1}
\end{align}
Using
\begin{align}
\label{hl1}
\nabla^{j}_s \nabla_t^m f=\sum_{i=0}^{m} \lambda^{i} \sum_{\substack{[[a,b]] \leq [[4m-2+j-2i,1]]\\c\leq 4m-2+j-2i\\b\in[1,4m-1-2i], \ odd}} P^{a,c}_{b} (\vec{\kappa}) \quad \mbox{  for }j=0,1,2 \, ,
\end{align}
and the fact that for a normal vector field $g$ its full derivatives can be written as $\partial_s g  = \nabla_s g -\langle g, \vec{\kappa}\rangle \tau $, and $
\partial_s^2 g  = \nabla_s^2 g -2 \langle \nabla_s g, \vec{\kappa}\rangle \tau -\langle  g, \nabla_s \vec{\kappa}\rangle \tau-\langle g,  \vec{\kappa}\rangle \vec{\kappa} $, it follows that
\begin{align}\label{smily2}
|\partial_s \nabla_t^m f|  
 & \leq \sum_{i=0}^{m} \lambda^{i} \sum_{\substack{[[a,b]] \leq [[4m-1-2i,1]]\\c\leq 4m-1-2i\\b\in[1,4m-2i]}} |P^{a,c}_{b} (\vec{\kappa})| \, ,
\end{align}
and
\begin{align}\label{smily3}
|\partial_s^2 \nabla_t^m f| & \leq   |\nabla_s^2 \nabla_t^m f|
  + \sum_{i=0}^{m} \lambda^{i} \sum_{\substack{[[a,b]] \leq [[4m-1-2i,2]]\\c\leq 4m-1-2i\\b\in[2,4m+1-2i] }} | P^{a,c}_{b} (\vec{\kappa})| \\ \nonumber
& \leq  \sum_{i=0}^{m} \lambda^{i} \sum_{\substack{[[a,b]] \leq [[4m-2i,1]]\\c\leq 4m-2i\\b\in[1,4m+1-2i]}} |P^{a,c}_{b} (\vec{\kappa}) |\, .
\end{align}

The first claim follows directly from \eqref{smily1}, \eqref{smily2} and \eqref{smily3}. 
Finally, for any $\epsilon \in (0,1)$ 
\begin{align*}
|\nabla_s \nabla_t^m f |^2 (x) & \leq \frac{\epsilon}{2} \int_{I} |\partial_s^2 \nabla_t^m f|^2 + C(\epsilon) \int_{I} |\partial_s \nabla_t^{m} f|^2. 
\end{align*}
Using the previous estimates and the fact that $(\sum_{i=1}^{n} a_i)^2 \leq n \sum_{i=1}^n a_i^2$ we infer the second claim.
\end{proof}

\begin{lemma}\label{hilfslemmabdry2}
 We have that for $x \in \partial I$
\begin{align*}
|\nabla_{s}\nabla_{t} f|^{4}(x) \leq \int_{I}
\sum_{i=0}^{4} \lambda^{i} \sum_{\substack{[[a,b]] \leq [[13-2i,4]]\\c\leq 4\\b\in[4,17-2i]}} |P^{a,c}_{b} (\vec{\kappa})|.
\end{align*}
\end{lemma}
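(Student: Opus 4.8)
The plan is to adapt the proof of Lemma~\ref{hilfslemmabdry}, replacing the second-power fundamental-theorem-of-calculus trick by its fourth-power analogue. By Lemma~\ref{lembdybeh}, $\nabla_t f$ vanishes at both endpoints, so each scalar component $[\nabla_t f]^i$ vanishes at $x=0$ and $x=1$; by Rolle's theorem there is an interior point $x_i^{*}\in I$ with $\partial_s[\nabla_t f]^i(x_i^{*})=0$, whence $(\partial_s[\nabla_t f]^i)^4(x_i^{*})=0$. Writing the fundamental theorem of calculus in the arc-length variable ($\partial_s\,ds=\partial_x\,dx$), I then get, for $x\in\partial I$,
\[
(\partial_s[\nabla_t f]^i)^4(x)=4\int_{x_i^{*}}^{x}(\partial_s[\nabla_t f]^i)^3\,\partial_s^2[\nabla_t f]^i\,ds\leq 4\int_I|\partial_s[\nabla_t f]^i|^3\,|\partial_s^2[\nabla_t f]^i|\,ds.
\]
Combining this with $|\nabla_s\nabla_t f|\leq|\partial_s\nabla_t f|$, the elementary inequality $(\sum_i b_i)^2\leq n\sum_i b_i^2$ applied to $b_i=(\partial_s[\nabla_t f]^i)^2$, and the bounds $|\partial_s[\nabla_t f]^i|\leq|\partial_s\nabla_t f|$, $|\partial_s^2[\nabla_t f]^i|\leq|\partial_s^2\nabla_t f|$, I arrive at
\[
|\nabla_s\nabla_t f|^4(x)\leq C_n\int_I|\partial_s\nabla_t f|^3\,|\partial_s^2\nabla_t f|\,ds,
\]
with $C_n$ depending only on the ambient dimension.

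Next I would insert the pointwise bounds on the full derivatives already established inside the proof of Lemma~\ref{hilfslemmabdry}, namely \eqref{smily2} and \eqref{smily3} specialised to $m=1$, which control $|\partial_s\nabla_t f|$ and $|\partial_s^2\nabla_t f|$ by sums of $|P^{a,c}_b(\vec{\kappa})|$-terms. Expanding $|\partial_s\nabla_t f|^3\,|\partial_s^2\nabla_t f|$ as a product of four such factors reduces the claim to a multi-index count, using that the $P$-notation is multiplicative: the total derivative count $a$ and the factor count $b$ add, while the top index $c$ passes to the maximum. Selecting the $\lambda^{i_k}$-contribution of each factor, with $i=i_1+i_2+i_3+i_4\in\{0,\dots,4\}$, the combined invariant $a+\tfrac12 b$ is the sum of the per-factor invariants and works out to $\leq 15-2i$, which by \eqref{rel-ab} is exactly the invariant of the index set $[[13-2i,4]]$; the explicit factor-count ranges add up to $b\in[4,17-2i]$ (the lower bound because each factor carries at least one $\vec{\kappa}$-term); and $c\leq 4$. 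These match the ranges in the statement, and after absorbing $C_n$ into the universal constants of the $P$-notation the estimate follows.

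I expect the only delicate point to be the combinatorial bookkeeping in the second step: one has to verify that both constraints defining the target sum—the $a+\tfrac12 b$ invariant \emph{and} the explicit bound $b\leq 17-2i$—are respected simultaneously by every term produced in the expansion, and that the constant bound $c\leq 4$ holds uniformly in $i$. The analytic ingredients (the boundary vanishing of $\nabla_t f$ and the fourth-power estimate) are routine once the $m=1$ derivative estimates \eqref{smily2} and \eqref{smily3} are in hand, so the real care lies in matching indices rather than in any new estimate.
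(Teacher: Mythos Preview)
Your proposal is correct and follows essentially the same approach as the paper: the paper also bounds $|\nabla_s\nabla_t f|^4(x)\leq |\partial_s\nabla_t f|^4(x)\leq 4n\int_I|\partial_s\nabla_t f|^3|\partial_s^2\nabla_t f|\,ds$ via the fourth-power fundamental-theorem-of-calculus argument, then inserts \eqref{smily2}, \eqref{smily3} with $m=1$ (together with the cube estimate $|\partial_s\nabla_t f|^3\leq\sum_{i=0}^{3}\lambda^i\sum_{[[a,b]]\leq[[9-2i,3]],\,c\leq 3,\,b\in[3,12-2i]}|P^{a,c}_b(\vec{\kappa})|$) and multiplies out. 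Your index bookkeeping matches the paper's ranges exactly.
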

\begin{proof}
Proceeding as in the proof of the previous lemma we find
\begin{align*}
|\nabla_{s}\nabla_{t} f|^{4}(x) \leq |\partial_s \nabla_{t}f|^4 (x) \leq  4n \int_{I} |\partial_s \nabla_{t}f|^3 | \partial_s^2 \nabla_{t}f| \, ds \, . 
\end{align*}
Using \eqref{smily2}, \eqref{smily3} (with $m=1$) 
and 
\begin{align*}
|\partial_s \nabla_t f|^3 & \leq \sum_{i=0}^{3} \lambda^{i} \sum_{\substack{[[a,b]] \leq [[9-2i,3]]\\c\leq 3\\b\in[3,12-2i]}} |P^{a,c}_{b} (\vec{\kappa})| \, ,
\end{align*}
the claim follows.
\end{proof}


\section{Interpolation inequalities}
\label{Sez4}

The main result in this section is the following inequality (see Lemma \ref{lemineq} for more details): one has that for any $\epsilon \in (0,1)$
\begin{equation*}
\sum_{\substack{[[a,b]]\leq[[A,B]]\\c\leq k-1\\ b \in [2, M]}} \int_I |P^{a,c}_{b}(\vec{\kappa})| \, ds  \leq \epsilon \int_I |\nabla_s^{k} \vec{\kappa}|^2 ds +C(\epsilon, \mathcal{L}[f], \| \vec{\kappa}\|_{L^2}, A,B,k,n,M) \, ,
\end{equation*}
whenever $A + \frac12 B < 2k+1$. This is the key ingredient to control the terms in the right-hand side of \eqref{eqgen} in Lemma \ref{lempartint}. 

The inequality stated above follows from suitable interpolation inequalities for which 
it is useful to introduce the following norms
\begin{equation*}
\| \vec{\kappa}\|_{k,p} := \sum_{i=0}^{k} \| \nabla_s^{i} \vec{\kappa}\|_{p} \quad \mbox{ with } \quad \| \nabla_s^{i} \vec{\kappa} \|_{p} := \mathcal{L}[f]^{i+1-1/p} \Big( \int_I |\nabla_s^{i} \vec{\kappa}|^p ds \Big)^{1/p} \, ,
\end{equation*}
as opposed to
\begin{equation*}
\| \nabla_s^{i} \vec{\kappa}\|_{L^{p}} := \Big( \int_I |\nabla_s^{i} \vec{\kappa}|^p ds \Big)^{1/p} \, .
\end{equation*}
These norms  are motivated by suitable  scaling properties (see Appendix \ref{AppendixInter}). 

The following Lemma \ref{leminter}, Lemma \ref{lemineq}, and Lemma \ref{Qlemma} are adaptations to the present setting and notation of those used in \cite{DKS} and \cite{Lin}. We choose to state the results in details for sake of completeness. 
Moreover we indicate the precise dependence of the appearing constants. 

\begin{lemma}\label{leminter}
Let $f: I \rightarrow \mathbb{R}^n$ be a smooth regular curve. Then for all $k \in \mathbb{N}$, $p \geq 2$ and $0 \leq i<k$ we have
\begin{equation*}
\| \nabla_s^{i} \vec{\kappa}\|_{p} \leq C \|\vec{\kappa}\|_{2}^{1-\alpha} \|\vec{\kappa}\|_{k,2}^{\alpha} \, ,  
\end{equation*}
with $\alpha= (i+\frac12 -\frac{1}{p})/k$ and $C=C(n,k,p)$.
\end{lemma}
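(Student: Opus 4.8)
The plan is to prove the Gagliardo--Nirenberg type interpolation inequality
$$\| \nabla_s^{i} \vec{\kappa}\|_{p} \leq C \|\vec{\kappa}\|_{2}^{1-\alpha} \|\vec{\kappa}\|_{k,2}^{\alpha}, \qquad \alpha = \frac{i + \tfrac12 - \tfrac1p}{k},$$
by reducing it to the standard Euclidean Gagliardo--Nirenberg inequality on an interval and then checking that the scaling-invariant norms $\|\cdot\|_p$ and $\|\cdot\|_{k,2}$ are precisely engineered so that the inequality becomes scale-free. The key observation is that $\vec{\kappa}$, although a normal vector field along $f$, can for the purpose of estimating $\int_I |\nabla_s^i \vec{\kappa}|^p\, ds$ be treated componentwise: the quantities $\nabla_s^j \vec{\kappa}$ and the arc-length measure $ds$ are all intrinsic, so after passing to the arc-length parametrization the problem becomes one about a vector-valued function on an interval of length $\mathcal{L}[f]$ with the measure $ds$ playing the role of Lebesgue measure.

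First I would reparametrize by arc-length, so that $\nabla_s$ acts as ordinary differentiation along an interval $[0,L]$ with $L = \mathcal{L}[f]$, and recall the classical interpolation inequality: for a function $u$ on an interval and $0 \le i < k$, $p \ge 2$,
$$\Big(\int |u^{(i)}|^p\Big)^{1/p} \le C\, \|u\|_{L^2}^{1-\alpha}\, \|u\|_{W^{k,2}}^{\alpha} + C\, L^{-\beta}\,\|u\|_{L^2},$$
the standard Gagliardo--Nirenberg--Sobolev statement on a bounded interval (with the lower-order term reflecting that the interval is bounded and $u$ need not have compact support). The exponent $\alpha = (i + \tfrac12 - \tfrac1p)/k$ is exactly the dimensional-counting exponent in one space dimension that makes the leading term scale correctly. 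Then I would substitute the definitions of the weighted norms $\|\nabla_s^i\vec{\kappa}\|_p = \mathcal{L}[f]^{i+1-1/p}(\int_I|\nabla_s^i\vec{\kappa}|^p\,ds)^{1/p}$ and $\|\vec{\kappa}\|_{k,2} = \sum_{j=0}^k \|\nabla_s^j\vec{\kappa}\|_2$ and verify that each power of $L$ introduced by these weights cancels with the powers of $L$ appearing in the Euclidean inequality, so that both the leading term and the lower-order remainder collapse into the single right-hand side $C\|\vec{\kappa}\|_2^{1-\alpha}\|\vec{\kappa}\|_{k,2}^\alpha$.

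The main obstacle, and the step I expect to require the most care, is the treatment of the lower-order remainder term $C L^{-\beta}\|u\|_{L^2}$ from the Euclidean inequality: I would need to check that after inserting the scaling weights this term is dominated by the product $\|\vec{\kappa}\|_2^{1-\alpha}\|\vec{\kappa}\|_{k,2}^\alpha$ uniformly in $L$. Here the crucial algebraic fact is that the weighted norms are constructed to be \emph{scale invariant}: under the rescaling $f \mapsto r f$ one has $\mathcal{L} \mapsto r\mathcal{L}$, $\nabla_s^j\vec{\kappa} \mapsto r^{-j-2}\nabla_s^j\vec{\kappa}$, and the power $\mathcal{L}^{j+1-1/p}$ in the definition of $\|\nabla_s^j\vec{\kappa}\|_p$ is chosen exactly so that $\|\nabla_s^j\vec{\kappa}\|_p$ is invariant. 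Both sides of the claimed inequality are then invariant under this rescaling (the exponents $1-\alpha$ and $\alpha$ on the right sum to $1$, matching the single weighted norm on the left), which means it suffices to prove the inequality after normalizing, say, $L = 1$; on the fixed unit interval the inequality is precisely the classical one with no $L$-dependence, and the remainder term is then simply absorbed. I would carry out the argument first for a single scalar component of $\vec{\kappa}$ and then sum over the $n$ components, with the constant $C$ depending only on $n$, $k$, $p$, as claimed, and I would record the detailed scaling computation in the referenced Appendix~\ref{AppendixInter}.
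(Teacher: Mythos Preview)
Your proposal has a genuine gap in the very first reduction step. You write that after reparametrizing by arc-length, ``$\nabla_s$ acts as ordinary differentiation along an interval $[0,L]$'', and then propose to apply the classical Gagliardo--Nirenberg inequality componentwise in the fixed Euclidean coordinates of $\mathbb{R}^n$. But arc-length reparametrization only turns $\partial_s$ into $\partial_x$; it does \emph{not} turn the normal covariant derivative $\nabla_s\vec{\phi}=\partial_s\vec{\phi}-\langle\partial_s\vec{\phi},\tau\rangle\tau$ into componentwise differentiation. In fixed coordinates the $j$-th component of $\nabla_s\vec{\kappa}$ is not $\partial_s\kappa^j$; for instance $\partial_s\vec{\kappa}=\nabla_s\vec{\kappa}-|\vec{\kappa}|^2\tau$ (see Lemma~\ref{Qlemma}), and the discrepancy iterates and compounds at higher order. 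So the classical one-dimensional GN inequality applied to each $\kappa^j$ bounds $\|\partial_s^i\vec{\kappa}\|_{L^p}$ in terms of $\sum_j\|\partial_s^j\vec{\kappa}\|_{L^2}$, which is neither the left-hand side nor the right-hand side you want.

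There are two ways to close this gap. One is to observe that the normal bundle of a curve carries a flat connection, so one may choose a \emph{parallel} orthonormal normal frame $\{e_2,\dots,e_n\}$ along $f$; in those frame components $\nabla_s$ genuinely becomes $\partial_s$, and then your componentwise application of Euclidean GN goes through verbatim. The other is the route the paper actually takes in Appendix~\ref{AppendixInter}: it never identifies $\nabla_s$ with $\partial_s$, but instead builds the interpolation inequalities from scratch for $\nabla_s$ by (i) integrating by parts directly with $\nabla_s$ (which is legitimate because $\langle\nabla_s\vec{\phi},\vec{\psi}\rangle=\langle\partial_s\vec{\phi},\vec{\psi}\rangle$ for normal $\vec{\psi}$), (ii) using the pointwise inequality $|\partial_s|\vec{\phi}||\le|\nabla_s\vec{\phi}|$ (Lemma~\ref{lemmafacile}) to feed the scalar $|\nabla_s^i\vec{\kappa}|$ into standard one-dimensional Sobolev embeddings, and (iii) an induction on $k$ (Lemma~\ref{indstep1} and the lemma following it). Your scaling argument via $\mathcal{L}[f]=1$ is correct and is exactly how the paper disposes of the length dependence; what is missing is the mechanism relating $\nabla_s$ to an operator to which scalar interpolation applies.
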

\begin{proof}
A proof of this fact is hinted at in \cite[Lemma 2.4]{DKS} and \cite[Lemma~5]{Lin}. We give all details in Appendix \ref{AppendixInter}.
\end{proof}

\begin{cor}\label{corinter}
Let $f: I \rightarrow \mathbb{R}^n$ be a smooth regular curve. Then for all $k \in \mathbb{N}$ we have
\begin{equation*}
\|\vec{\kappa}\|_{k,2} \leq C ( \|\nabla^{k}_{s}\vec{\kappa}\|_{2} + \|\vec{\kappa}\|_{2}) \, ,
\end{equation*}
with $C=C(n,k)$.
\end{cor}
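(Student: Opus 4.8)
The plan is to reduce everything to the interpolation inequality of Lemma \ref{leminter} followed by a standard absorption argument. Writing out the definition of the norm,
$$\|\vec{\kappa}\|_{k,2} = \|\vec{\kappa}\|_2 + \sum_{i=1}^{k-1} \|\nabla_s^i \vec{\kappa}\|_2 + \|\nabla_s^k \vec{\kappa}\|_2 \, ,$$
I observe that the first and last summands are already exactly of the form appearing on the right-hand side of the claim, so it suffices to control each \emph{intermediate} term $\|\nabla_s^i \vec{\kappa}\|_2$, $1 \le i \le k-1$, by $\|\nabla_s^k \vec{\kappa}\|_2$ and $\|\vec{\kappa}\|_2$.

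For each such $i$ I would apply Lemma \ref{leminter} with $p=2$. This gives the exponent $\alpha = (i + \tfrac12 - \tfrac12)/k = i/k \in (0,1)$ and hence
$$\|\nabla_s^i \vec{\kappa}\|_2 \le C \, \|\vec{\kappa}\|_2^{\,1-i/k} \, \|\vec{\kappa}\|_{k,2}^{\,i/k} \, .$$
The decisive point is that the exponent of the stronger norm $\|\vec{\kappa}\|_{k,2}$ is strictly less than $1$, so the product splits via Young's inequality (weighted arithmetic--geometric mean with a free parameter): for every $\epsilon > 0$,
$$C \, \|\vec{\kappa}\|_2^{\,1-i/k}\|\vec{\kappa}\|_{k,2}^{\,i/k} \le \epsilon \, \|\vec{\kappa}\|_{k,2} + C(\epsilon, i, k, n)\, \|\vec{\kappa}\|_2 \, .$$
Summing over the $k-1$ intermediate indices and reinserting the two endpoint terms yields
$$\|\vec{\kappa}\|_{k,2} \le \|\nabla_s^k \vec{\kappa}\|_2 + (k-1)\epsilon \, \|\vec{\kappa}\|_{k,2} + C(\epsilon,k,n)\, \|\vec{\kappa}\|_2 \, .$$
I would then fix $\epsilon$ so small that $(k-1)\epsilon \le \tfrac12$ and absorb the term $(k-1)\epsilon\,\|\vec{\kappa}\|_{k,2}$ into the left-hand side, which produces the claimed estimate with a constant $C=C(n,k)$.

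The only step requiring a word of care -- and the main (and rather mild) obstacle -- is the absorption itself, which is legitimate only because $\|\vec{\kappa}\|_{k,2}$ is \emph{finite}. This is guaranteed by the smoothness and regularity of $f$ on the compact interval $\bar{I}$, so moving the intermediate norm to the left is harmless. Everything else is routine bookkeeping, and one should note that no interpolation is needed for the endpoints $i=0$ and $i=k$, since those summands are literally $\|\vec{\kappa}\|_2$ and $\|\nabla_s^k \vec{\kappa}\|_2$.
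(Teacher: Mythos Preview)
Your proof is correct and follows essentially the same approach as the paper: apply Lemma~\ref{leminter} to control intermediate derivatives by $\|\vec{\kappa}\|_{k,2}$ and $\|\vec{\kappa}\|_2$, then use Young's inequality and absorb. The only cosmetic difference is that the paper organizes this as an induction on $k$ (applying the lemma once per step to $\|\nabla_s^{k}\vec{\kappa}\|_2$ inside $\|\vec{\kappa}\|_{k+1,2}$), whereas you treat all intermediate indices $1\le i\le k-1$ simultaneously.
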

\begin{proof}
We proceed by induction on $k$. The claim for $k=1$ follows directly from the definition of the norm. Let us assume the claim is true up to some $k \geq 1$. Then by Lemma \ref{leminter}
\begin{align*}
 & \| \vec{\kappa}\|_{k+1,2}  = \| \vec{\kappa}\|_{k,2} + \| \nabla_s^{k+1} \vec{\kappa}\|_{2} \leq C (\|\nabla^{k}_{s}\vec{\kappa}\|_{2} + \|\vec{\kappa}\|_{2}) + \| \nabla_s^{k+1} \vec{\kappa}\|_{2} \\
&  \leq C \|\vec{\kappa}\|_{k+1,2}^{\frac{k}{k+1}}
\|\vec{\kappa}\|_{2}^{1-\frac{k}{k+1}}\!\! + \! C \|\vec{\kappa}\|_{2} + \! \|
\nabla_s^{k+1} \vec{\kappa}\|_{2}  \leq \frac{1}{2} \|\vec{\kappa}\|_{k+1,2} + \!C \|\vec{\kappa}\|_{2} + \! \| \nabla_s^{k+1} \vec{\kappa}\|_{2}  
\end{align*}
from which the claim follows.
\end{proof}

\begin{lemma}\label{lemineq}
For any $a,c \in \mathbb{N}_0$, $b \in \mathbb{N}$, $b\geq 2$, $c \leq k-1$ we find
\begin{equation*}
\int_I |P^{a,c}_{b} (\vec{\kappa})| \, ds \leq C \mathcal{L}[f]^{1-a-b} \|\vec{\kappa}\|_{2}^{b-\gamma} \| \vec{\kappa}\|_{k,2}^{\gamma} \, ,
\end{equation*}
with $\gamma=(a+\frac12 b-1)/k$ and $C=C(n,k, b)$. Further if $A,B,M\in \mathbb{N}$, $M \geq 2$ with $A+\frac12 B<2k+1$, then for any $\epsilon \in (0,1)$
\begin{align*}
\sum_{\substack{[[a,b]]\leq[[A,B]]\\c\leq k-1\\b \in [2,M]}} \int_I |P^{a,c}_{b}(\vec{\kappa})| & \leq \epsilon \int_I |\nabla_s^{k} \vec{\kappa}|^2 ds +C \epsilon^{-\frac{\overline{\gamma}}{2-\overline{\gamma}}} \max\{1, \|\vec{\kappa}\|^2_{L^{2}}\}^{\frac{M-\overline{\gamma}}{2-\overline{\gamma}}}\\
& \quad  + C \min\{1, \mathcal{L}[f]\}^{1-A-\frac{B}{2}} \max\{1, \|\vec{\kappa}\|_{L^{2}}\}^{M}  + C \| \vec{\kappa} \|_{L^{2}}^{2}\, ,
\end{align*}
with $\overline{\gamma}= (A + \frac12 B-1)/k$ and $C=C(n,k,A,B)$.
\end{lemma}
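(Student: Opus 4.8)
The plan is to prove the two statements in sequence, treating the pointwise interpolation inequality (first display) as the workhorse from which the summed $\epsilon$-inequality (second display) follows by standard manipulations. For the first inequality, I would begin with a single term $\nabla_s^{i_1}\vec{\kappa}*\dots*\nabla_s^{i_b}\vec{\kappa}$ with $i_1+\dots+i_b=a$ and $\max i_j\leq c\leq k-1$. Integrating its absolute value over $I$ and applying the generalized H\"older inequality with exponents $p_j$ chosen so that $\sum 1/p_j=1$, one bounds $\int_I|P^{a,c}_b(\vec{\kappa})|\,ds$ by a product $\prod_{j=1}^b\|\nabla_s^{i_j}\vec{\kappa}\|_{L^{p_j}}$. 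The natural choice is $p_j$ reflecting the ``weight'' $i_j$ of each factor; I would pick the exponents so that Lemma \ref{leminter} applies to each factor with a common interpolation parameter. Rewriting everything in terms of the scale-invariant norms $\|\cdot\|_p$ (which absorb the powers of $\mathcal{L}[f]$) is essential here: the exponent $1-a-b$ on $\mathcal{L}[f]$ and the homogeneity $b-\gamma$, $\gamma$ in the two norms should emerge precisely from matching the scaling, which is exactly why those weighted norms were introduced.

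The cleanest route is to exploit the scaling/homogeneity directly. Both sides of the claimed inequality are homogeneous under the rescaling $f\mapsto Rf$ (which sends $\mathcal{L}[f]\mapsto R\mathcal{L}[f]$ and $\nabla_s^i\vec{\kappa}\mapsto R^{-(i+1)}\nabla_s^i\vec{\kappa}$), and the weighted norms are built to be scale-invariant. Hence it suffices to prove the estimate after normalizing, say, $\mathcal{L}[f]=1$, where the $\mathcal{L}[f]^{1-a-b}$ factor disappears. In that normalized setting I would apply Lemma \ref{leminter} to each factor $\nabla_s^{i_j}\vec{\kappa}$ with exponent $p_j$ and parameter $\alpha_j=(i_j+\tfrac12-\tfrac1{p_j})/k$, then multiply the resulting bounds. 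The total power of $\|\vec{\kappa}\|_{k,2}$ comes out as $\sum_j\alpha_j$, and the bookkeeping $\sum_j(i_j+\tfrac12-\tfrac1{p_j})=a+\tfrac12 b-1$ (using $\sum 1/p_j=1$) gives exactly $\gamma=(a+\tfrac12 b-1)/k$, while the power of $\|\vec{\kappa}\|_2$ is $\sum_j(1-\alpha_j)=b-\gamma$. Restoring the scaling reinserts $\mathcal{L}[f]^{1-a-b}$. One must check that each $\alpha_j\in[0,1]$ so Lemma \ref{leminter} is applicable; this is where the hypothesis $c\leq k-1$ (i.e. $i_j<k$ for every factor) is used, guaranteeing each individual interpolation is legitimate.

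For the second, summed inequality I would first invoke Corollary \ref{corinter} to replace $\|\vec{\kappa}\|_{k,2}^\gamma$ by a constant times $(\|\nabla_s^k\vec{\kappa}\|_2+\|\vec{\kappa}\|_2)^\gamma$, and observe that the condition $A+\tfrac12 B<2k+1$ forces the overall exponent $\overline{\gamma}=(A+\tfrac12 B-1)/k$ on the top-order norm to satisfy $\overline{\gamma}<2$. That strict inequality is precisely what makes Young's inequality applicable: writing the dominant term as $\|\nabla_s^k\vec{\kappa}\|_2^{\overline{\gamma}}$ times lower-order factors and applying Young with exponents $2/\overline{\gamma}$ and $2/(2-\overline{\gamma})$ converts $\|\nabla_s^k\vec{\kappa}\|_2^{\overline{\gamma}}$ into $\epsilon\|\nabla_s^k\vec{\kappa}\|_2^2$ plus a constant, where the constant carries the characteristic $\epsilon^{-\overline{\gamma}/(2-\overline{\gamma})}$ dependence. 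The powers of $\max\{1,\|\vec{\kappa}\|_{L^2}\}$ and the $\min\{1,\mathcal{L}[f]\}^{1-A-B/2}$ factor then arise from tracking, across all terms in the finite sum, the worst-case exponents in $\mathcal{L}[f]$ and in $\|\vec{\kappa}\|_2$; converting the scale-invariant $\|\vec{\kappa}\|_2$ back to $\|\vec{\kappa}\|_{L^2}=\mathcal{L}[f]^{-1/2}\|\vec{\kappa}\|_2$ produces these residual length factors. I expect the \textbf{main obstacle} to be the second part rather than the first: the bookkeeping needed to uniformize the many terms in the sum into the single stated right-hand side, in particular getting the $\min/\max$ structure and the exact exponents $\tfrac{M-\overline{\gamma}}{2-\overline{\gamma}}$ and $1-A-\tfrac B2$ correct, requires careful case analysis according to whether $\mathcal{L}[f]$ and $\|\vec{\kappa}\|_{L^2}$ are large or small. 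The pointwise interpolation itself is essentially a disciplined application of H\"older, Lemma \ref{leminter}, and scaling.
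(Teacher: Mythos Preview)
Your proposal is correct and follows essentially the same route as the paper. The only simplification worth noting is that the paper makes the trivial H\"older choice $p_j=b$ for every factor (which is legitimate since $b\geq 2$ ensures Lemma~\ref{leminter} applies), so there is no need to tailor the exponents to the weights $i_j$ or to normalize $\mathcal{L}[f]=1$; the length power $1-a-b$ and the exponent $\gamma=\sum_j\gamma_j$ drop out directly from $\prod_j\|\nabla_s^{i_j}\vec{\kappa}\|_{L^b}=\mathcal{L}[f]^{1-a-b}\prod_j\|\nabla_s^{i_j}\vec{\kappa}\|_b$ and $\gamma_j=(i_j+\tfrac12-\tfrac1b)/k$.
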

It is interesting to note that the right-hand side of the second inequality depends only on the \emph{lower bound} of the length of the curve.
\begin{proof}
First of all note that $\gamma=0$ if and only if $a=0$ and $b=2$. In this case the first claim follows immediately using the definition of the norm.
Next let $0<\gamma$.
Each of the terms in $|P^{a,c}_{b}(\vec{\kappa})|$ is of the form $|\nabla_s^{i_1} \vec{\kappa} |\cdot \ldots \cdot| \nabla_s^{i_{b}}\vec{\kappa}|$ with $i_1+ ...+i_{b}=a$ and $i_{j} \leq c \leq k-1$. Then by H\"older's inequality and Lemma~\ref{leminter}
\begin{align*}
\int_I |\nabla_s^{i_1} \vec{\kappa}|\cdot \ldots \cdot| \nabla_s^{i_{b}}\vec{\kappa} | \, ds & \leq \prod_{j=1}^{b} \| \nabla_s^{i_j} \vec{\kappa} \|_{L^b} = \mathcal{L}[f]^{1-a-b} \prod_{j=1}^{b} \| \nabla_s^{i_j} \vec{\kappa} \|_{b}\\
& \leq \mathcal{L}[f]^{1-a-b} \prod_{j=1}^{b} C \|\vec{\kappa} \|_{2}^{1-\gamma_j} \|\vec{\kappa} \|_{k,2}^{\gamma_j} \, 
\end{align*}
with $\gamma_j=(i_{j}+\frac12 -\frac{1}{b})/k$, from which the first claim follows directly.

For the second claim, notice that each element of the sum is of  type $|\nabla_s^{i_1} \vec{\kappa} |\cdot \ldots \cdot| \nabla_s^{i_{b}}\vec{\kappa}|$ with $i_1+ ...+i_{b}=a \leq A$, $2 \leq b \leq \min\{M, 2A+B-2a \}$ and $i_{j} \leq c \leq k-1$. In particular, $a+\frac12 b \leq A + \frac12 B$ (see \eqref{rel-ab}). Then by the first claim and Corollary \ref{corinter} we obtain
\begin{align*}
\int_I |\nabla_s^{i_1} \vec{\kappa} |\cdot \ldots \cdot| \nabla_s^{i_{b}}\vec{\kappa} | \, ds & \leq C \mathcal{L}[f]^{1-a-b} \|\vec{\kappa} \|_{2}^{b-\gamma} \|\vec{\kappa} \|_{k,2}^{\gamma} \\
& \leq  C \mathcal{L}[f]^{1-a-b} \|\vec{\kappa} \|_{2}^{b-\gamma} ( \|\nabla_s^{k} \vec{\kappa} \|_{2}^{\gamma} +\|\vec{\kappa} \|_{2}^{\gamma} )\\
& \leq C  \|\vec{\kappa} \|_{L^2}^{b-\gamma} \|\nabla_s^{k} \vec{\kappa} \|_{L^2}^{\gamma} +C \mathcal{L}[f]^{1-a-\frac12 b}  \|\vec{\kappa} \|_{L^2}^{b} \, ,
\end{align*}
with $0<\gamma=(a+\frac12 b-1)/k \leq (A+\frac12 B-1)/k<2$ (the last inequality being true by assumption). Then by Young's inequality
\begin{align*}
\int_I |\nabla_s^{i_1} \vec{\kappa} |\cdot \ldots \cdot| \nabla_s^{i_{b}}\vec{\kappa} | \, ds
& \leq \epsilon \|\nabla_s^{k} \vec{\kappa} \|_{L^2}^2 +C \epsilon^{-\frac{\gamma}{2-\gamma}} \| \vec{\kappa} \|_{L^2}^{2 \frac{b-\gamma}{2-\gamma}}  +C\mathcal{L}[f]^{1-a-\frac12 b}  \|\vec{\kappa} \|_{L^2}^{b} ,
\end{align*}
that gives the claim using that $0<\gamma \leq \overline{\gamma}$ and $2 \leq b \leq M$. The term with $\gamma=0$ is taken care of by $C \| \vec{\kappa}\|_{L^{2}}^{2}$.
\end{proof}

The following estimates will also be useful in the proof of long-time existence.
\begin{lemma}
\label{bfm}
Assume that $\|\vec{\kappa}\|_{L^{2}} \leq C$. If $\|\nabla_{t}^{m}f\|_{L^{2}} \leq C$, for some  $1 \leq m$, then it follows that $$\| \nabla_{s}^{i} \vec{\kappa} \|_{L^{2}} \leq C, \qquad \text{ for all }\,  0\leq i \leq 4m-2.$$
The constant $C$ depends on $\lambda$, $n$, $m$ and on the lower bound on $\mathcal{L}[f]$.
\end{lemma}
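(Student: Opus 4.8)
The plan is to use the representation formula \eqref{E5} for $\nabla_t^m f$ to trade the temporal derivatives for the single top-order spatial term $\nabla_s^{4m-2}\vec{\kappa}$ plus genuinely lower-order contributions, and then to close the estimate by absorption. Since $\|\vec{\kappa}\|_{L^2}\le C$ is assumed, it suffices by interpolation to bound $\int_I|\nabla_s^{4m-2}\vec{\kappa}|^2\,ds$; all lower derivatives $\nabla_s^i\vec{\kappa}$ with $0\le i\le 4m-3$ are then recovered at the very end by a single application of Lemma \ref{lemineq} (with $b=2$, $A=2i$, $B=2$, $k=4m-2$, which is admissible since $2i+1<2(4m-2)+1$ and $c=i\le k-1$), feeding in the already-established bound on $\int_I|\nabla_s^{4m-2}\vec{\kappa}|^2\,ds$ on the right-hand side.

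For the top-order term I would rewrite \eqref{E5} as $\nabla_s^{4m-2}\vec{\kappa}=(-1)^m(\nabla_t^m f-G)$, where $G$ collects all the $P_b^{a,c}(\vec{\kappa})$ terms on the right-hand side of \eqref{E5}. Each such term is a normal vector field (all admissible $b$ are odd), with total derivative count $a+\tfrac12 b\le 4m-\tfrac52$ and, crucially, highest per-factor derivative $c\le 4m-4$. Testing against $\nabla_s^{4m-2}\vec{\kappa}$ yields
\begin{equation*}
\int_I |\nabla_s^{4m-2}\vec{\kappa}|^2\,ds=(-1)^m\int_I\langle\nabla_s^{4m-2}\vec{\kappa},\nabla_t^m f\rangle\,ds-(-1)^m\int_I\langle\nabla_s^{4m-2}\vec{\kappa},G\rangle\,ds.
\end{equation*}
The first integral is handled by Cauchy--Schwarz together with the hypothesis $\|\nabla_t^m f\|_{L^2}\le C$, giving $\epsilon\|\nabla_s^{4m-2}\vec{\kappa}\|_{L^2}^2+C(\epsilon)$.

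The delicate point, which I expect to be the main obstacle, is the second integral. The naive identity $\langle\nabla_s^{4m-2}\vec{\kappa},P_b^{a,c}\rangle=P_{b+1}^{a+4m-2,\,4m-2}$ produces a factor carrying $4m-2=k$ derivatives, which violates the requirement $c\le k-1$ of Lemma \ref{lemineq} and so cannot be interpolated directly. The remedy is to apply Cauchy--Schwarz \emph{first}, splitting off the top-order factor via $|\int_I\langle\nabla_s^{4m-2}\vec{\kappa},G\rangle\,ds|\le\epsilon\|\nabla_s^{4m-2}\vec{\kappa}\|_{L^2}^2+C(\epsilon)\|G\|_{L^2}^2$, and then to exploit that squaring leaves the per-factor derivative count unchanged: each $|P_b^{a,c}(\vec{\kappa})|^2$ equals $|P_{2b}^{2a,c}(\vec{\kappa})|$, so the highest derivative occurring in $|G|^2$ is still $\le 4m-4<k$, while the total derivative count becomes $2a+b=2(a+\tfrac12 b)\le 8m-5<2k+1$. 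Hence Lemma \ref{lemineq} applies to $\|G\|_{L^2}^2=\int_I|G|^2\,ds$ and delivers $\|G\|_{L^2}^2\le\epsilon'\int_I|\nabla_s^{4m-2}\vec{\kappa}|^2\,ds+C$.

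Collecting the three estimates gives $\int_I|\nabla_s^{4m-2}\vec{\kappa}|^2\,ds\le\big(2\epsilon+C(\epsilon)\epsilon'\big)\int_I|\nabla_s^{4m-2}\vec{\kappa}|^2\,ds+C$; choosing $\epsilon$ and then $\epsilon'$ small enough to absorb the gradient term on the left yields the bound $\|\nabla_s^{4m-2}\vec{\kappa}\|_{L^2}\le C$, which combined with the interpolation step above finishes the proof. All constants depend only on $\lambda$ (through the $\lambda^i$-terms of \eqref{E5}), on $n$, $m$, the assumed bounds on $\|\vec{\kappa}\|_{L^2}$ and $\|\nabla_t^m f\|_{L^2}$, and — via the length-dependent factor in Lemma \ref{lemineq} — on the lower bound for $\mathcal{L}[f]$, exactly as claimed; note that no \emph{upper} bound on the length is required, consistently with the remark following Lemma \ref{lemineq}.
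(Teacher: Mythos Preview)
Your proof is correct and follows essentially the same route as the paper: rewrite \eqref{E5} to isolate $\nabla_s^{4m-2}\vec{\kappa}$, observe that the remainder $G$ has per-factor derivative count $c\le 4m-4<k-1$ so that after squaring one can invoke Lemma~\ref{lemineq} with $k=4m-2$ and absorb, and finally recover the intermediate derivatives by a second application of Lemma~\ref{lemineq}. The only cosmetic difference is that the paper squares directly (i.e.\ uses $|\nabla_s^{4m-2}\vec{\kappa}|^2\le C|\nabla_t^m f|^2+C|G|^2$ pointwise) rather than testing against $\nabla_s^{4m-2}\vec{\kappa}$ and then splitting via Cauchy--Schwarz; the two are equivalent and your explicit remark on why one must separate the top-order factor \emph{before} applying Lemma~\ref{lemineq} is a useful clarification.
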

\begin{proof}
The result follows using \eqref{E5} in Lemma \ref{lemLin}, Lemma
\ref{lemineq}, and the bound $\|\vec{\kappa}\|_{L^{2}} \leq C$.  
Indeed from \eqref{E5} we derive
{\allowdisplaybreaks 
\begin{align*}
\|\nabla_{s}^{4m-2}\vec{\kappa}\|_{L^{2}}^{2} & \leq C \|\nabla_{t}^{m}f\|^{2}_{L^{2}} +
\int_{I} \sum_{\substack{[[a,b]] \leq [[8m-8,6]]\\c\leq 4m-4\\b\in[6, 8m-2],even}} | P^{a,c}_{b} (\vec{\kappa}) | \; ds  +\\
& \quad +  \sum_{\substack{i=2\\ even}}^{2m} \lambda^{i} \int_{I} \sum_{\substack{[[a,b]] \leq [[8m-4-2i,2]]\\c\leq 4m-2-i\\b\in[2, 8m-2-2i],even}} | P^{a,c}_{b} (\vec{\kappa}) | \; ds.
\end{align*}
}
Applying Lemma \ref{lemineq} to the second and third term (with $k=4m-2$) we can bound $\|\nabla_{s}^{4m-2}\vec{\kappa}\|_{L^{2}}$.
The bound on $\|\nabla_{s}^{j}\vec{\kappa}\|_{L^{2}}^{2}$ for $j \leq 4m-3$ follows
again by using Lemma \ref{lemineq} and $\|\nabla_{s}^{4m-2}\vec{\kappa}\|_{L^{2}} \leq C$. 
\end{proof}

So far we have derived bounds for the normal component of the derivatives of the curvature. The following lemmas indicate how to gain control over the whole derivative.

\begin{lemma}
\label{Qlemma}
We have the identities
\begin{align*}
& \partial_{s} \vec{\kappa}= \nabla_{s}\vec{\kappa} -|\vec{\kappa}|^{2}\tau,\\
& \partial_{s}^{m} \vec{\kappa} = \nabla_{s}^{m} \vec{\kappa} +\tau \sum_{\substack{[[a,b]] \leq [[m-1,2]]\\c \leq m-1\\b \in [2,2[\frac{m+1}{2}]], even}} P^{a,c}_{b} 
(\vec{\kappa}) + \sum_{\substack{[[a,b]] \leq [[m-2,3]]\\c \leq m-2\\b \in [3,2[\frac{m}{2}]+1], odd}} P^{a,c}_{b} (\vec{\kappa})  \quad \text{ for } m\geq 2 \, .
\end{align*}
\end{lemma}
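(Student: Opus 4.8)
The plan is to derive both identities from the single elementary rule that for \emph{any} normal vector field $g$ along $f$ one has
\begin{equation*}
\partial_s g = \nabla_s g - \langle \vec{\kappa}, g \rangle \tau \, ,
\end{equation*}
which follows by differentiating $\langle g, \tau \rangle = 0$ and using $\partial_s \tau = \vec{\kappa}$. Taking $g = \vec{\kappa}$ yields the first identity at once. For the second I would argue by induction on $m$, applying $\partial_s$ to the formula for $\partial_s^m \vec{\kappa}$ and sorting the resulting terms into a tangential part (a multiple of $\tau$) and a normal part.

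To keep track of the combinatorics I abbreviate the claimed decomposition as $\partial_s^m \vec{\kappa} = \nabla_s^m \vec{\kappa} + \tau A_m + B_m$, where $A_m$ is the even--factor scalar sum carrying the $\tau$--coefficient and $B_m$ the odd--factor normal--vector sum. It is worth noting first that the first identity already has this shape, with $A_1 = -\langle \vec{\kappa}, \vec{\kappa} \rangle$ (which meets the index constraints of the general formula read at $m=1$) and $B_1$ empty; hence the induction can be started at $m=1$, and I would check $m=2$ by hand as a sanity test, obtaining $\partial_s^2 \vec{\kappa} = \nabla_s^2 \vec{\kappa} - 3 \langle \nabla_s \vec{\kappa}, \vec{\kappa} \rangle \tau - |\vec{\kappa}|^2 \vec{\kappa}$.

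The inductive step rests on how $\partial_s$ acts on the three building blocks. First, $\partial_s (\nabla_s^m \vec{\kappa}) = \nabla_s^{m+1} \vec{\kappa} - \langle \vec{\kappa}, \nabla_s^m \vec{\kappa} \rangle \tau$ supplies the new leading normal term together with the tangential term $\langle \vec{\kappa}, \nabla_s^m \vec{\kappa} \rangle \tau$, of type $\tau\, P^{m,m}_2(\vec{\kappa})$. Second, $\partial_s(\tau A_m) = \vec{\kappa} A_m + \tau\, \partial_s A_m$: the product $\vec{\kappa} A_m$ turns an even scalar product into an odd normal one (feeding $B_{m+1}$), while $\partial_s A_m$ remains an even scalar product, because differentiating an inner product of normal fields merely raises the derivative counts---the $\tau$--contributions drop out against the normal factors---so this feeds $\tau A_{m+1}$. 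Third, expanding $\partial_s B_m$ by the product rule, differentiating any inner--product factor or the $\nabla_s^{i+1}\vec{\kappa}$ part of the free vector factor keeps an odd normal $P$--term (feeding $B_{m+1}$), whereas the $-\langle \vec{\kappa}, \nabla_s^{i} \vec{\kappa} \rangle \tau$ piece of the free factor turns it into $\tau$ times an even scalar product (feeding $A_{m+1}$).

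The only substantive work left is the parameter bookkeeping: for each term produced above I would verify that it lands in the index set claimed for $A_{m+1}$ or $B_{m+1}$, tracking the total number of derivatives $a$, the number of factors $b$ with its parity, and the top derivative $c$. Under the various operations $a + \tfrac12 b$ increases by $\tfrac12$ or $1$, the factor count moves $b \mapsto b$ or $b \mapsto b+1$, and $c$ rises by at most one; using $[\tfrac{m+2}{2}] = [\tfrac{m}{2}] + 1$ one then checks that the ranges $b \in [2, 2[\tfrac{m+2}{2}]]$ (even) and $b \in [3, 2[\tfrac{m+1}{2}]+1]$ (odd) are reproduced exactly, together with $c \leq m$ respectively $c \leq m-1$. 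The main obstacle is simply that several of these terms saturate the bounds---for instance $\langle \vec{\kappa}, \nabla_s^m \vec{\kappa} \rangle \tau$ attains $a + \tfrac12 b = m+1$ and $c = m$ in $A_{m+1}$, and $\vec{\kappa} A_m$ attains $a + \tfrac12 b = m + \tfrac12$ in $B_{m+1}$---so the crux is to confirm that these extremal terms still meet the stated inequalities rather than overshoot them.
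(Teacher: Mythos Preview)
Your proposal is correct and follows essentially the same approach as the paper: prove the first identity from $\partial_s g = \nabla_s g - \langle \vec{\kappa}, g\rangle \tau$ for normal $g$, then induct on $m$ by applying $\partial_s$ to the decomposition $\nabla_s^m\vec{\kappa} + \tau A_m + B_m$ and sorting the output into tangential and normal parts. The paper's version is terser---it simply records the two formulas for $\partial_s$ applied to the $\tau$-sum and the normal sum and leaves the index bookkeeping implicit---whereas you spell out the mechanism and the extremal checks, but the underlying argument is identical.
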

\begin{proof}
The first claim is obtained directly using that 
$$\partial_s \vec{\kappa}= \nabla_s \vec{\kappa} + \langle \partial_s \vec{\kappa}, \tau \rangle \tau  = \nabla_s \vec{\kappa} - |\vec{\kappa}|^2 \tau \, .$$ 
The second claim follows by induction using that 
\begin{align*}
\partial_s \Big( \tau \sum_{\substack{[[a,b]] \leq [[m-1,2]]\\c \leq m-1\\b
    \in [2,2[\frac{m+1}{2}]], even}}  \hspace{-.2cm} P^{a,c}_{b} (\vec{\kappa}) \Big) & = \tau
\hspace{-.2cm}\sum_{\substack{[[a,b]] \leq [[m,2]]\\c \leq m\\b \in
    [2,2[\frac{m+1}{2}]], even}}  \hspace{-.2cm} P^{a,c}_{b} (\vec{\kappa}) +
\hspace{-.2cm}\sum_{\substack{[[a,b]] \leq [[m-1,3]]\\c \leq m-1\\b \in
    [3,2[\frac{m+1}{2}]+1], odd}}  \hspace{-.2cm} P^{a,c}_{b} (\vec{\kappa}) \, ,\\
\partial_s \Big( \sum_{\substack{[[a,b]] \leq [[m-2,3]]\\c \leq m-2\\b \in
    [3,2[\frac{m}{2}]+1], odd}}  \hspace{-.2cm} P^{a,c}_{b} (\vec{\kappa}) \Big) & =
 \hspace{-.2cm} \sum_{\substack{[[a,b]] \leq [[m-1,3]]\\c \leq m-1\\b \in
    [3,2[\frac{m}{2}]+1], odd}} P^{a,c}_{b} (\vec{\kappa}) + \tau
\hspace{-.2cm} \sum_{\substack{[[a,b]] \leq [[m-2,4]]\\c \leq m-2\\b \in [4,2[\frac{m}{2}]+2], even}} P^{a,c}_{b} (\vec{\kappa})  \, .
\end{align*}
\end{proof}

\begin{lemma}
\label{bvollder}
Given $m\geq 1$,
assume that $\| \nabla_{s}^{m} \vec{\kappa} \|_{L^{2}} \leq C$ and $\| \vec{\kappa}\|_{L^{2}} \leq C$. Then we have that
\begin{align*}
\| \partial_{s}^{l} \vec{\kappa} \|_{L^{2}} \leq C \; \text{ for } 0\leq l \leq m.
\end{align*}
The constant $C$ depends on $n$, $m$ and on the lower bound on $\mathcal{L}[f]$.
\end{lemma}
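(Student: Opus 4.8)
The plan is to express each full derivative $\partial_s^l \vec{\kappa}$ in terms of the normal derivatives $\nabla_s^i \vec{\kappa}$ using Lemma \ref{Qlemma}, and then to estimate the $L^2$-norm of the resulting terms using the interpolation inequality of Lemma \ref{lemineq}, together with the hypotheses $\|\nabla_s^m \vec{\kappa}\|_{L^2} \leq C$ and $\|\vec{\kappa}\|_{L^2} \leq C$. First I would observe that it suffices to treat $l \geq 2$, since the cases $l=0$ and $l=1$ are either the hypothesis itself or follow directly from the identity $\partial_s \vec{\kappa} = \nabla_s \vec{\kappa} - |\vec{\kappa}|^2 \tau$ in Lemma \ref{Qlemma}: the term $|\vec{\kappa}|^2$ is a $P^{0,0}_2(\vec{\kappa})$ to which Lemma \ref{lemineq} applies with $k=m$ (note $A+\frac12 B = 1 < 2m+1$), while $\|\nabla_s\vec{\kappa}\|_{L^2}$ is controlled by interpolation between $\|\vec{\kappa}\|_{L^2}$ and $\|\nabla_s^m\vec{\kappa}\|_{L^2}$ via Corollary \ref{corinter} and Lemma \ref{leminter}.

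For general $2 \leq l \leq m$, I would apply Lemma \ref{Qlemma} to write
\begin{equation*}
\partial_s^l \vec{\kappa} = \nabla_s^l \vec{\kappa} + \tau \sum_{\substack{[[a,b]] \leq [[l-1,2]]\\ c \leq l-1\\ b \text{ even}}} P^{a,c}_b(\vec{\kappa}) + \sum_{\substack{[[a,b]] \leq [[l-2,3]]\\ c \leq l-2\\ b \text{ odd}}} P^{a,c}_b(\vec{\kappa}) \, .
\end{equation*}
Taking $L^2$-norms and using the triangle inequality, the leading term $\|\nabla_s^l\vec{\kappa}\|_{L^2}$ is bounded by interpolating (Corollary \ref{corinter} and Lemma \ref{leminter}, since $l \leq m$) between the two controlled quantities $\|\vec{\kappa}\|_{L^2}$ and $\|\nabla_s^m\vec{\kappa}\|_{L^2}$. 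For the remaining sums I would bound $\|P^{a,c}_b\|_{L^2}^2$ by an integral of the form $\int_I |P^{2a,c}_{2b}(\vec{\kappa})|\,ds$ (squaring multiplies the number of derivatives and factors by two, so a term with parameters $(a,b)$ contributes one with parameters $(2a,2b)$) and then invoke Lemma \ref{lemineq} with $k=m$. The crucial point is to verify the hypotheses of that lemma: one needs $c \leq k-1 = m-1$, which holds because $c \leq l-1 \leq m-1$, and one needs the scaling condition $2a + \frac12(2b) < 2m+1$, i.e. $2a+b < 2m+1$.

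The main obstacle, and the step deserving care, is checking this scaling inequality $2a + b \leq 2m$ for every term produced. For a term in the even sum one has $a \leq l-1$ and the factor count satisfies $b \leq 2[\tfrac{l+1}{2}] \leq l+1$, while the relation embodied in Lemma \ref{lemineq} (see \eqref{rel-ab}) permits restricting to $b \leq 2A + B - 2a$ with $(A,B)=(l-1,2)$, giving $a + \frac12 b \leq l-1+1 = l \leq m$; hence after squaring $2a + b \leq 2l \leq 2m < 2m+1$. The odd sum is handled identically with $(A,B)=(l-2,3)$, yielding $a+\frac12 b \leq l-2+\frac32 = l-\frac12 < m+\frac12$, so again the squared term satisfies the strict inequality. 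With the hypotheses of Lemma \ref{lemineq} confirmed, each summand is bounded by $\epsilon\|\nabla_s^m\vec{\kappa}\|_{L^2}^2$ plus a constant depending only on $\|\vec{\kappa}\|_{L^2}$ and the lower bound on $\mathcal{L}[f]$; choosing $\epsilon$ small and absorbing, and using $\|\nabla_s^m\vec{\kappa}\|_{L^2} \leq C$, completes the estimate for each $l$ and hence the proof.
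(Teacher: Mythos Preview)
Your proof is correct and follows essentially the same approach as the paper: decompose $\partial_s^l\vec\kappa$ via Lemma~\ref{Qlemma}, then control the lower-order $P^{a,c}_b(\vec\kappa)$ terms using the interpolation Lemma~\ref{lemineq}. The only cosmetic differences are that the paper applies Lemma~\ref{lemineq} with $k=l$ (after first noting that all $\|\nabla_s^l\vec\kappa\|_{L^2}$, $0\le l\le m$, are bounded) rather than with $k=m$, and that your phrase ``choosing $\epsilon$ small and absorbing'' is unnecessary here since $\|\nabla_s^m\vec\kappa\|_{L^2}$ is a hypothesis rather than a term to be absorbed---you may simply take $\epsilon=1$.
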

\begin{proof}
It follows directly from Lemma \ref{Qlemma} and Lemma \ref{lemineq}. First of all note that a bound on  $\| \vec{\kappa}\|_{L^{2}} \leq C$ and on $\| \nabla_{s}^{m} \vec{\kappa} \|_{L^{2}} \leq C$ imply that 
\begin{equation}\label{m1}
\| \nabla_{s}^{l} \vec{\kappa} \|_{L^{2}} \leq C \quad \mbox{ for all }0 \leq l\leq m \, ,
\end{equation} 
by a direct application of Lemma \ref{lemineq}. Morever Lemma \ref{Qlemma} yields for any $l \in \mathbb{N}$
\begin{align}\label{m2}
\| \partial_{s}^{l} \vec{\kappa} \|^{2}_{L^{2}} & \leq C \| \nabla_{s}^{l} \vec{\kappa} \|^{2}_{L^{2}} + \int_{I}
\sum_{\substack{[[a,b]] \leq [[2l-2,4]]\\c \leq l-1\\b \in [4,2l+2], even}} |P^{a,c}_{b} 
(\vec{\kappa})| \, ds \, .
\end{align}
Then the claim follows using \eqref{m2}, \eqref{m1} and applying Lemma \ref{lemineq} with $k=l$.
\end{proof}


\section{Long-time existence}
\label{Sez5}
This entire Section is dedicated to the proof of the main result, Theorem~\ref{mainTh} (see Section \ref{Sez1} for a precise statement), which states global existence of the flow.

A detailed proof of short-time existence is outside the scope of this paper:
for main ideas and useful arguments we refer to \cite{MM}, \cite{Poppin}, \cite{LPS}, and \cite{EZ}.

\begin{proof}[Proof of Theorem \ref{mainTh}] In the following $C$ denotes a generic constant that may vary from line to line. We will explicitly write down  what the constant depends on.

We assume by contradiction that the solution of \eqref{ivp} does not exist globally. Then there exists a maximal time  $0<T<\infty$,  such that the solution exists only for $t \in [0,T)$.\smallskip

\textit{First Step:} \underline{Bounds on the length $\mathcal{L}[f]$ and on $\int_{I} |\vec{\kappa}|^2 \, ds$}.

By Lemma \ref{endecr} we know that the energy is decreasing in $t$. Hence for all $t \in [0,T)$, 
\begin{equation}\label{mm1}
W_{\lambda} (f(t)) \leq W_{\lambda}(f_0) \, ,
\end{equation} 
which directly implies a upper bound on the length when $\lambda>0$: indeed
\begin{equation}\label{upblength}
\mathcal{L}[f(t)] \leq \frac{1}{\lambda} W_{\lambda}(f(t)) \leq \frac{1}{\lambda} W_{\lambda}(f_0) \, . 
\end{equation}
(The case $\lambda=0$ will be dealt with later on, see \eqref{luplength2}.)
On the other hand, the lower bound on the length follows directly from the boundary conditions as follows
\begin{equation}\label{loblength}
\mathcal{L}[f(t)] \geq  |f_{+}-f_{-}|\, . 
\end{equation}
We also find
\begin{align*}
\frac12 \int_{I} |\vec{\kappa}|^2 \, ds & \leq \frac12 \int_{I} |\vec{\kappa}|^2 \, ds - \int_{I} \langle \vec{\kappa}, \zeta \rangle \, ds  + \Big| \int_{I} \langle \vec{\kappa}, \zeta \rangle \, ds \Big| 
& \leq W_{\lambda}(f_0) +  \Big| [ \langle \tau, \zeta \rangle ]_0^1 \Big|  ,
\end{align*}
 hence
\begin{align}
\label{upbcurv}
\int_{I} |\vec{\kappa}|^2 \, ds \leq C(\zeta, W_{\lambda}(f_0))\,  .
\end{align}
Note that the above inequality is independent of any control of the length of the curve.
 
\newpage

\textit{Second Step:} \underline{Expression for $\frac12 \frac{d}{dt} \int_{I}
  |\nabla_t^{m} f|^2 \, ds + \int_{I} |\nabla_t^{m} f|^2  \, ds$, $m \in \mathbb{N}$.}

As in \cite{Lin} we get using \eqref{E5} in Lemma \ref{lemLin}
\begin{align*}
& \nabla_t \nabla_t^{m} f + \nabla_s^4 \nabla_t^m f  = \nabla_t^{m+1} f + \nabla_s^4 \nabla_t^m f \\
& \qquad = \sum_{\substack{[[a,b]]\leq [4m,3]\\c\leq 4m\\b\in[3, 4m+3], odd}} P^{a,c}_{b} (\vec{\kappa}) + \sum_{i=1}^{m+1} \lambda^{i} \sum_{\substack{[[a,b]]\leq [4m+2-2i,1]\\c\leq 4m+2-2i\\b\in[1, 4m+3-2i], odd}} P^{a,c}_{b}(\vec{\kappa}) =: Y \, ,
\end{align*}
since there is a cancellation on the highest order terms. 
Note that for some universal constants $C_1,C_2,C_3$
\begin{align}
\label{Yexpand}
Y &= C_1\nabla_{s}^{4m} \vec{\kappa} * \vec{\kappa}*\vec{\kappa} + C_2 \lambda \nabla_{s}^{4m} \vec{\kappa}  +C_3 \nabla_{s}^{4m} \vec{\kappa} \\ \nonumber
& \quad + \hspace{-.3cm}\sum_{\substack{[[a,b]]\leq [4m,3]\\c\leq 4m-1\\b\in[3, 4m+3], odd}}\hspace{-.2cm}
P^{a,c}_{b} (\vec{\kappa}) + \lambda \hspace{-.2cm} \sum_{\substack{[[a,b]]\leq
    [4m-1,3]\\c\leq 4m-1\\b\in[3, 4m+1], odd}}\hspace{-.3cm} P^{a,c}_{b}(\vec{\kappa}) + \sum_{i=2}^{m+1} \lambda^{i}\hspace{-.2cm} \sum_{\substack{[[a,b]]\leq [4m+2-2i,1]\\c\leq 4m+2-2i\\b\in[1, 4m+3-2i], odd}}\hspace{-.3cm} P^{a,c}_{b}(\vec{\kappa}).
\end{align}
In this way we have singled out the most critical terms in $Y$, namely those on which $4m$ derivatives fall all on one  factor. 

By Equation \eqref{eqgen} in Lemma \ref{lempartint} with $\vec{\phi}=\nabla_t^{m}f$ (recall that $\nabla_t^{m}f=0$ at the boundary) we get
\begin{align} \label{pente1}
 \frac{d}{dt} \frac{1}{2}\int_{I} |\nabla_t^{m}f|^{2} ds  + \int_{I}
 |\nabla_{s}^{2} \nabla_t^{m} f|^{2 }& ds - [ \langle \nabla_s \nabla_t^{m} f,
 \nabla_s^2 \nabla_t^{m} f \rangle ]_0^1 \\  \nonumber
& = \int_{I} \langle Y, \nabla_t^{m}f \rangle ds - \frac{1}{2} \int_{I} |\nabla_t^{m}f|^{2} \langle \vec{\kappa}, \vec{V} \rangle ds \, ,
\end{align}
with $\vec{V}= - \nabla^2_{s} \vec{\kappa}-\frac12 |\vec{\kappa}|^2 \vec{\kappa}+ \lambda \vec{\kappa}$. 
We first look at the order of the term $\nabla_s^2 \nabla_t^m f$. Again by \eqref{E5} in Lemma \ref{lemLin} (see also \eqref{hl1}) we infer
\begin{equation*}
\nabla_s^2\nabla_{t}^{m} f =  (-1)^{m} \nabla_s^{4m} \vec{\kappa} + \sum_{\substack{[[a,b]] \leq [[4m -2,3]]\\c\leq 4 m -2\\b\in[3, 4m-1],odd}} P^{a,c}_{b} (\vec{\kappa}) + \sum_{i=1}^{m} \lambda^{i} \sum_{\substack{[[a,b]] \leq [[4m-2i,1]]\\c\leq 4 m-2i\\b\in[1, 4m-1-2i],odd}} P^{a,c}_{b} (\vec{\kappa})\,. 
\end{equation*}
Using the fact that $(\sum_{i=1}^{q} a_{i})^{2} \leq q \sum_{i=1}^{q} a_{i}^{2}$ 
we derive
\begin{align*}
& |\nabla_s^2\nabla_{t}^{m} f |^2  \\
& = | \nabla_s^{4m} \vec{\kappa}|^2 \\
& \qquad + 2 (-1)^m  \nabla_s^{4m} \vec{\kappa} \Big[ \sum_{\substack{[[a,b]] \leq [[4m -2,3]]\\c\leq 4 m -2\\b\in[3, 4m-1],odd}} P^{a,c}_{b} (\vec{\kappa}) + \sum_{i=1}^{m} \lambda^{i} \sum_{\substack{[[a,b]] \leq [[4m-2i,1]]\\c\leq 4 m-2i\\b\in[1, 4m-1-2i],odd}} P^{a,c}_{b} (\vec{\kappa}) \Big]\\
&\qquad + \Big[ \sum_{\substack{[[a,b]] \leq [[4m -2,3]]\\c\leq 4 m -2\\b\in[3, 4m-1],odd}} P^{a,c}_{b} (\vec{\kappa}) + \sum_{i=1}^{m} \lambda^{i} \sum_{\substack{[[a,b]] \leq [[4m-2i,1]]\\c\leq 4 m-2i\\b\in[1, 4m-1-2i],odd}} P^{a,c}_{b} (\vec{\kappa}) \Big]^{2}\\
&  \geq (1-\epsilon_1) |\nabla_s^{4m} \vec{\kappa}|^2 \\
& \qquad - C(\epsilon_1) \Big( \sum_{\substack{[[a,b]] \leq [[8m -4,6]]\\c\leq 4 m -2\\b\in[6, 8m-2],even}} |P^{a,c}_{b} (\vec{\kappa})| + \sum_{\substack{i=2\\   even}}^{2m} \lambda^{i} \sum_{\substack{[[a,b]] \leq [[8m-2i,2]]\\c\leq 4 m-i\\b\in[2, 8m-2-2i],even}} |P^{a,c}_{b} (\vec{\kappa})| \Big).
\end{align*} 
In \eqref{pente1} we write
$|\nabla_s^2\nabla_{t}^{m} f |^2=\varepsilon_{2} |\nabla_s^2\nabla_{t}^{m} f |^2 +(1-\varepsilon_{2}) |\nabla_s^2\nabla_{t}^{m} f |^2$ 
 and we apply the inequality above to the second term. 
 Thus we obtain for \linebreak 
 any~$\varepsilon_1, \varepsilon_2 \in (0,1)$
{\allowdisplaybreaks\begin{align*}
& \frac{d}{dt} \frac12 \int_{I} |\nabla_t^{m}f|^{2} ds + \epsilon_2 \int_{I} |\nabla_{s}^{2} \nabla_t^{m} f|^{2 } ds   \\
&\quad + (1-\epsilon_2)(1-\epsilon_1) \int_{I} |\nabla_s^{4m} \vec{\kappa}|^2 ds  - [ \langle \nabla_s \nabla_t^{m} f, \nabla_s^2 \nabla_t^{m} f \rangle ]_0^1 \\
& \leq \int_{I} \langle Y, \nabla_t^{m}f \rangle ds - \frac{1}{2} \int_{I} |\nabla_t^{m}f|^{2} \langle \vec{\kappa}, \vec{V} \rangle ds \\
& \quad + C(\epsilon_1, \epsilon_2)\int_{I} \Big(\hspace{-.2cm} \sum_{\substack{[[a,b]] \leq
    [[8m -4,6]]\\c\leq 4 m -2\\b\in[6, 8m-2],even}}\hspace{-.2cm} | P^{a,c}_{b}
(\vec{\kappa})| + \sum_{\substack{i=2,\\ even }}^{2m} \lambda^{i}
\hspace{-.2cm} \sum_{\substack{[[a,b]] \leq [[8m-2i,2]]\\c\leq 4 m-i\\b\in[2, 8m-2-2i],even}} \hspace{-.2cm}| P^{a,c}_{b} (\vec{\kappa}) | \Big) ds .
\end{align*}}

Next we aim at writing the first two integrals on the right-hand side of the previous inequality in the form of  the last one. We cannot do it directly since in $Y$ there are terms where $4m$ derivatives fall on one factor (see \eqref{Yexpand}) and these would be too singular when interpolating. But using \eqref{E5} in Lemma \ref{lemLin}, the fact that $\nabla_{t}^{m}f=0$ at the boundary, and integrating by parts once the highest order terms, we obtain
\begin{align*}
\int_{I} \langle Y, \nabla_t^{m}f \rangle ds & = \int_{I}\Big(
\hspace{-.2cm}\sum_{\substack{[[a,b]] \leq [[8m -2,4]]\\c\leq 4 m -1\\b\in[4,
    8m+2],even}}  \hspace{-.2cm} P^{a,c}_{b} (\vec{\kappa}) +
\sum_{i=1}^{2m+1} \lambda^{i}  \hspace{-.2cm} \sum_{\substack{[[a,b]] \leq
    [[8m-2i,2]]\\c\leq 4 m-1\\b\in[2, 8m+2-2i],even}}  \hspace{-.2cm} P^{a,c}_{b} (\vec{\kappa}) \Big) ds \,. 
\end{align*}
Next note that since
\begin{align*}
|\nabla_{t}^{m}f|^{2} &\leq C  | \nabla_s^{4m-2} \vec{\kappa}|^{2} \\
& \quad  + C( \sum_{\substack{[[a,b]] \leq [[8m -8,6]]\\c\leq 4 m -4\\b\in[6, 8m-2],even}} |P^{a,c}_{b} (\vec{\kappa}) |+ \sum_{\substack{i=2, \\even}}^{2m} \lambda^{i} \sum_{\substack{[[a,b]] \leq [[8m -4-2i,2]]\\c\leq 4 m -2-i\\b\in[2, 8m-2-2i],even}} |P^{a,c}_{b} (\vec{\kappa}) | )\\
&\leq\sum_{\substack{[[a,b]] \leq [[8m -4,2]]\\c\leq 4 m -2\\b\in[2, 8m-2],even}} |P^{a,c}_{b} (\vec{\kappa}) |+ \sum_{\substack{i=2, \\even}}^{2m} \lambda^{i} \sum_{\substack{[[a,b]] \leq [[8m -4-2i,2]]\\c\leq 4 m -2-i\\b\in[2, 8m-2-2i],even}} |P^{a,c}_{b} (\vec{\kappa}) | )\,,
\end{align*}
we derive 
\begin{align*}
 |\int_{I} &|\nabla_t^{m}f|^2 \langle \vec{\kappa}, \vec{V} \rangle ds | \\
& \leq \int_{I}\Big( \sum_{\substack{[[a,b]] \leq [[8m -2,4]]\\c\leq 4 m -2\\b\in[4, 8m+2],even}} |P^{a,c}_{b} (\vec{\kappa})| + \sum_{i=1}^{2m+1} \lambda^{i} \sum_{\substack{[[a,b]] \leq [[8m-2-2i,4]]\\c\leq 4 m-2\\b\in[4, 8m+2-2i],even}} P^{a,c}_{b} (\vec{\kappa}) \Big) ds\,,
\end{align*}
yielding (add $\int_{I} |\nabla_t^{m}f|^{2} ds$ to both sides of \eqref{pente1})
\begin{align}\label{emlr}
& \frac{d}{dt} \frac12 \int_{I} |\nabla_t^{m}f|^{2} ds + \int_{I} |\nabla_t^{m}f|^{2} ds + \epsilon_2 \int_{I} |\nabla_{s}^{2} \nabla_t^{m} f|^{2 } ds \\ \nonumber
& \quad + (1-\epsilon_2)(1-\epsilon_1) \int_{I} |\nabla_s^{4m} \vec{\kappa}|^2 ds  - [ \langle \nabla_s \nabla_t^{m} f, \nabla_s^2 \nabla_t^{m} f \rangle ]_0^1 \\  \nonumber
& \leq C(\epsilon_1, \epsilon_2) \int_{I} \Big( \hspace{-.2cm}
\sum_{\substack{[[a,b]] \leq [[8m -2,4]]\\c\leq 4 m -1\\b\in[2, 8m+2],even}}
\hspace{-.2cm} | P^{a,c}_{b} (\vec{\kappa})| +\hspace{-.2cm} \sum_{i=1}^{2m +1} \lambda^{i} \sum_{\substack{[[a,b]] \leq [[8m-2i,2]]\\c\leq 4 m-1\\b\in[2, 8m+2-2i],even}}\hspace{-.2cm} | P^{a,c}_{b} (\vec{\kappa}) | \Big) ds .
\end{align}
\newpage

\textit{Third Step:} \underline{The boundary terms.}

By Lemma \ref{lembterms} (recall \eqref{psil}) we may write the boundary terms as
\begin{align*}
 [\langle \nabla_s & \nabla_t^{m} f,  \nabla_s^2 \nabla_t^{m} f \rangle ]_0^1 \\
& =[ \, -   \langle \zeta, \tau \rangle |\nabla_s \nabla_t^{m} f|^2  +  \langle \zeta , \nabla_s \nabla_t^{m} f \rangle \sum_{\substack{i+j=m\\i,j \geq 1}} c^{m}_{i,j}\langle \nabla_{s} \nabla_t^{i} f, \nabla_{s} \nabla^{j}_t f \rangle  \\ 
& \qquad  + \langle \zeta, \tau \rangle  \sum_{\substack{3 \leq n \leq m\\n \mbox{ \tiny odd}}} \langle\vec{R}_{n}^{m}, \nabla_s \nabla^{m}_t f \rangle  + \sum_{\substack{2 \leq n \leq m\\n \mbox{ \tiny even}}} \langle \vec{S}_{n}^{m},\nabla_s \nabla^{m}_t f \rangle  \, ]_0^1\\
& =[ I +II +III+IV ]_0^1\, .
\end{align*}
We need to bound these terms in absolute value from above.

a) We start by looking at the first boundary term $I:=-  \langle \zeta, \tau \rangle |\nabla_s \nabla_t^{m} f|^2 (x) $, for $x \in \partial I$.
Using \eqref{epsbdyht} in Lemma \ref{hilfslemmabdry} and the fact that $\zeta$ is fixed we  find for any $\epsilon>0$
\begin{align*}
 & \Big| \langle \zeta,  \tau \rangle |\nabla_s \nabla_t^{m} f|^2  \Big| \\
& \quad \leq  \epsilon \int_I |\nabla_s^2 \nabla_t^{m} f|^2 ds 
+ C(\epsilon,\zeta) \int_I \sum_{\substack{i=0,\\ i \ even}}^{2m} \lambda^{i}
\sum_{\substack{[[a,b]] \leq [[8m-2-2i,4]]\\c\leq 4m-1-i\\b\in[2,8m+2-2i]}} |P^{a,c}_{b} (\vec{\kappa})| ds\, . 
\end{align*} 

b) Next we consider the second boundary term
 $$II:= \langle \zeta , \nabla_s \nabla_t^{m} f \rangle \sum_{\substack{i+j=m\\i,j \geq 1}} c^{m}_{i,j}\langle \nabla_{s} \nabla_t^{i} f, \nabla_{s} \nabla^{j}_t f \rangle \, .$$ 
At $x \in \partial I$ it  can be estimated as follows
\begin{align*}
|II|\leq |\nabla_s \nabla_t^m f|^2 (x)    + C(\zeta)\sum_{\substack{i+j=m\\i,j \geq 1}}| \nabla_{s} \nabla_t^{i} f|^2 | \nabla_{s} \nabla^{j}_t f |^2 (x) \, .
\end{align*}

c) Similarly for 
\begin{align*}
III & := \langle \zeta, \tau \rangle  \sum_{\substack{3 \leq n \leq m\\n \mbox{ \tiny odd}}} \langle\vec{R}_{n}^{m}, \nabla_s \nabla^{m}_t f \rangle  \, , \qquad 
IV  := \sum_{\substack{2 \leq n \leq m\\n \mbox{ \tiny even}}} \langle \vec{S}_{n}^{m},\nabla_s \nabla^{m}_t f \rangle  \, ,
\end{align*}
we have that for $x \in \partial I$
{\allowdisplaybreaks 
\begin{align*}
|III| & \leq |\nabla_s \nabla_t^m f|^2 (x) + C(\zeta) \sum_{\substack{i_{1}+ \ldots+ i_{n}=m, i_{j} \geq1 \\ n \in [3,m], odd}}
|\nabla_{s } \nabla_{t}^{i_{1}} f|^{2} \cdot \ldots \cdot |\nabla_{s }
\nabla_{t}^{i_{n}} f|^{2} (x)\,, 
\\
|IV| & \leq |\nabla_s \nabla_t^m f|^2 (x) + C( \zeta) \sum_{\substack{i_{1}+ \ldots+ i_{n}=m, i_{j} \geq1 \\ n \in [2,m], even}}
|\nabla_{s } \nabla_{t}^{i_{1}} f|^{2} \cdot \ldots \cdot |\nabla_{s } \nabla_{t}^{i_{n}} f|^{2} (x) \, .
\end{align*}
}

From a), b), c), and treating the first term in the right-hand side of b) and c) as in \eqref{epsbdyht} in Lemma \ref{hilfslemmabdry}, we get that for any $\epsilon_3>0$
\begin{align*}
 - [ \langle \nabla_s & \nabla_t^{m} f, \nabla_s^2 \nabla_t^{m} f \rangle ]_0^1 \\
& \geq  - \epsilon_3 \int_I |\nabla_s^2 \nabla_t^{m} f|^2 ds -
C(\epsilon_3,\zeta) \int_I \hspace{-.2cm} \sum_{\substack{i=0,\\ i \ even}}^{2m} \lambda^{i}
\sum_{\substack{[[a,b]] \leq [[8m-2-2i,4]]\\c\leq 4m-1-i\\b\in[2,8m+2-2i]
    }} \hspace{-.2cm} |P^{a,c}_{b} (\vec{\kappa})| ds  \\
& \qquad - C(\epsilon_3,\zeta) \sum_{\substack{i_{1}+ \ldots+ i_{n}=m, i_{j} \geq1 \\ n \in [2,m]}} \max_{x \in \partial I}
|\nabla_{s } \nabla_{t}^{i_{1}} f|^{2} \cdot \ldots \cdot |\nabla_{s } \nabla_{t}^{i_{n}} f|^{2}(x) \, .
\end{align*}
Choosing $\epsilon_3=\epsilon_2$ we get from \eqref{emlr}
\begin{align}\label{emlr1}
& \frac{d}{dt} \frac12 \int_{I} |\nabla_t^{m}f|^{2} ds + \int_{I} |\nabla_t^{m}f|^{2} ds + (1-\epsilon_2)(1-\epsilon_1) \int_{I} |\nabla_s^{4m} \vec{\kappa}|^2 ds \\ \nonumber
& \quad - C(\epsilon_2,\zeta) \sum_{\substack{i_{1}+ \ldots+ i_{n}=m, i_{j} \geq1 \\ n \in [2,m]}} \max_{x \in \partial I}
|\nabla_{s } \nabla_{t}^{i_{1}} f|^{2} \cdot \ldots \cdot |\nabla_{s } \nabla_{t}^{i_{n}} f|^{2}(x)  \\  \nonumber
& \leq C(\epsilon_1, \epsilon_2) \int_{I} \Big(\hspace{-.2cm} \sum_{\substack{[[a,b]] \leq [[8m -2,4]]\\c\leq 4 m -1\\b\in[4, 8m+2]}}\hspace{-.2cm} | P^{a,c}_{b} (\vec{\kappa})| + \sum_{i=1}^{2m +1} \lambda^{i}\hspace{-.2cm} \sum_{\substack{[[a,b]] \leq [[8m-2i,2]]\\c\leq 4 m-1\\b\in[2, 8m+2-2i]}}\hspace{-.2cm} | P^{a,c}_{b} (\vec{\kappa}) | \Big) ds .
\end{align}
\smallskip

\emph{Fourth Step:} \underline{Bound on $\| \nabla_{t}^{m} f\|_{L^{2}}$, for $m =1,2$.}

We start from inequality \eqref{emlr1}. For $m=1$ it becomes
\begin{align}\label{emlr2}
& \frac{d}{dt} \frac12 \int_{I} |\nabla_t f|^{2} ds + \int_{I} |\nabla_t f|^{2} ds + (1-\epsilon_2)(1-\epsilon_1) \int_{I} |\nabla_s^{4} \vec{\kappa}|^2 ds \\  \nonumber
& \quad \leq C(\epsilon_1, \epsilon_2) \int_{I} \Big( \sum_{\substack{[[a,b]] \leq [[6,4]]\\c\leq 3\\b\in[4, 10]}} | P^{a,c}_{b} (\vec{\kappa})| + \sum_{i=1}^{3} \lambda^{i} \sum_{\substack{[[a,b]] \leq [[8-2i,2]]\\c\leq 3\\b\in[2, 10-2i]}} | P^{a,c}_{b} (\vec{\kappa}) | \Big) ds .
\end{align}
By \eqref{loblength}, \eqref{upbcurv}, and Lemma \ref{lemineq} (with $k=4$) we  find for any $\epsilon_4>0$
\begin{align*}
& \mbox{RHS in \eqref{emlr2} } \leq  \epsilon_4 \int_{I} | \nabla_s^{4} \vec{\kappa}|^2 ds + C(n,\epsilon_1,\epsilon_2,\epsilon_4,\lambda, W_{\lambda}(f_0),\zeta,f_{+},f_{-}) \, .
\end{align*}
It is interesting to note that so far we have needed only a lower bound on the length of the curve.
The above estimate in \eqref{emlr2} yields
\begin{align*}
\frac{d}{dt} \frac12 \int_{I} |\nabla_tf|^{2} ds  + \int_{I} | \nabla_t f|^{2 } ds  \leq C(n, \lambda, W_{\lambda}(f_0),\zeta,f_{+},f_{-}) \, ,
\end{align*}
which implies $\| \nabla_{t} f\|^{2}_{L^{2}}(t) \leq \| \nabla_t f
\|^{2}_{L^2} (0)+ C(n, \lambda,W_{\lambda}(f_0),\zeta,f_{+},f_{-} )$. Since
$\| \nabla_t f \|_{L^2} (0) \leq C(f_0)$ ($f_0$ is attained
                                smoothly and 
we may take the limit $t \searrow 0$ in \eqref{eqh}) we will
simply write $\| \nabla_{t} f\|_{L^{2}} \leq C(n,
\lambda,W_{\lambda}(f_0),\zeta,f_{+},f_{-}, f_{0} )$.

For $m=2$ inequality \eqref{emlr1} becomes
\begin{align*}
& \frac{d}{dt} \frac12 \int_{I} |\nabla_t^{2}f|^{2} ds + \int_{I}
|\nabla_t^{2}f|^{2} ds \\
& + (1-\epsilon_2)(1-\epsilon_1) \int_{I} |\nabla_s^{8} \vec{\kappa}|^2 ds  - C(\epsilon_2,\zeta) \max_{x \in \partial I}|\nabla_{s } \nabla_{t} f|^{4} (x) \\  
& \quad \leq C(\epsilon_1, \epsilon_2) \int_{I} \Big( \sum_{\substack{[[a,b]] \leq [[14,4]]\\c\leq 7\\b\in[4, 18]}} | P^{a,c}_{b} (\vec{\kappa})| + \sum_{i=1}^{5} \lambda^{i} \sum_{\substack{[[a,b]] \leq [[16-2i,2]]\\c\leq 7\\b\in[2, 18-2i]}} | P^{a,c}_{b} (\vec{\kappa}) | \Big) ds .
\end{align*}
Lemma \ref{hilfslemmabdry2} yields
\begin{align*}
& \frac{d}{dt} \frac12 \int_{I} |\nabla_t^{2}f|^{2} ds + \int_{I} |\nabla_t^{2}f|^{2} ds + (1-\epsilon_2)(1-\epsilon_1) \int_{I} |\nabla_s^{8} \vec{\kappa}|^2 ds   \\  
& \quad \leq C(\epsilon_1, \epsilon_2,\zeta) \int_{I} \Big( \sum_{\substack{[[a,b]] \leq [[14,4]]\\c\leq 7\\b\in[4, 18]}} | P^{a,c}_{b} (\vec{\kappa})| + \sum_{i=1}^{5} \lambda^{i} \sum_{\substack{[[a,b]] \leq [[16-2i,2]]\\c\leq 7\\b\in[2, 18-2i]}} | P^{a,c}_{b} (\vec{\kappa}) | \Big) ds ,
\end{align*}
and then using \eqref{loblength}, \eqref{upbcurv} and Lemma \ref{lemineq} (with $k=8$) we get
\begin{align*}
\frac{d}{dt} \frac12 \int_{I} |\nabla_t^{2}f|^{2} ds  + \int_{I} | \nabla_t^{2} f|^{2 } ds  \leq C(n,\lambda, W_{\lambda}(f_0),\zeta,f_{+},f_{-}).
\end{align*}
Thus $\| \nabla_{t}^{2} f\|_{L^{2}}^{2} (t)\leq \| \nabla_t^2 f\|^{2}_{L^2}(0)+C(n,\lambda, W_{\lambda}(f_0),\zeta,f_{+},f_{-})$ and as before we simply write
$\| \nabla_{t}^{2} f\|_{L^{2}} \leq C(n,\lambda, W_{\lambda}(f_0),\zeta,f_{+},f_{-},f_{0})$.
\smallskip

\emph{Fifth Step:} \underline{Bound on $\| \nabla_{t}^{m} f\|_{L^{2}}$, for $m \geq 3$.}

In order to bound  $\| \nabla_{t}^{m} f\|_{L^{2}}$ for $m \geq 3$ we proceed by induction. Let us assume that for some $m \in \mathbb{N}$, $m\geq 2$,
\begin{equation}\label{eqemlr3}
\| \nabla_{t}^{i} f\|_{L^{2}} \leq C(n,\lambda,W_{\lambda}(f_0),\zeta,f_{+},f_{-},f_0,m) \quad \mbox{ for all }1\leq i \leq m \, .
\end{equation} 
We need to show that the bound holds also for $m+1$. We first observe that \eqref{eqemlr3} implies the following estimate
\begin{align}
\label{inftybdry}
| \nabla_{s} \nabla_t^{i} f|^2(x) \leq C(n,\lambda,W_{\lambda}(f_0),\zeta,f_{+},f_{-},f_0,m), 
\end{align}
for $1\leq i\leq m -1$ and $x \in \partial I$. Indeed, if we know that $\|\nabla_{t}^{m}f\|_{L^{2}} \leq C$, ($m \geq 2$), then by Lemma \ref{bfm} we infer that $\| \nabla_{s}^{l} \vec{\kappa} \|_{L^{2}} \leq C$
 for $0 \leq l \leq 4m-2$. By \eqref{epsbdyht0}, Lemma \ref{lemineq} (take the second inequality with $k=4i+1$ and $\epsilon =1$), \eqref{upbcurv}, and \eqref{loblength}, we have for $x \in \partial I$
\begin{align*}
| \nabla_{s} \nabla_t^{i} f(x)|^2 & \leq \int_I \sum_{j=0}^{2i} \lambda^{j} \sum_{\substack{[[a,b]] \leq [[8i-2j-1,2]]\\c\leq 4i\\b\in[2, 8i+1-2j]}} | P^{a,c}_{b} (\vec{\kappa}) | ds \\
& \leq \int_I |\nabla_s^{4i+1} \vec{\kappa}|^2 ds +C (n,\lambda,W_{\lambda}(f_0),\zeta,f_{+},f_{-},m).
\end{align*}
which implies \eqref{inftybdry} directly.

Therefore using \eqref{inftybdry} formula \eqref{emlr1} with $m+1$ becomes
\begin{align} \nonumber
& \frac{d}{dt} \frac12 \int_{I} |\nabla_t^{m+1}f|^{2} ds + \int_{I} |\nabla_t^{m+1}f|^{2} ds + (1-\epsilon_2)(1-\epsilon_1) \int_{I} |\nabla_s^{4(m+1)} \vec{\kappa}|^2 ds \\ \nonumber
& \quad - C(\epsilon_2,\zeta) \max_{x \in \partial I}
|\nabla_{s } \nabla_{t}^{m} f|^{2} \cdot |\nabla_{s } \nabla_{t} f|^{2}(x)  -C\\  \nonumber
& \leq C(\epsilon_1, \epsilon_2) \int_{I} \sum_{\substack{[[a,b]] \leq [[8(m+1) -2,4]]\\c\leq 4 (m+1) -1\\b\in[4, 8(m+1)+2]}} | P^{a,c}_{b} (\vec{\kappa})| ds\\ \label{emlr4}
& \quad +  C(\epsilon_1, \epsilon_2) \int_{I}  \sum_{i=1}^{2(m+1) +1} \lambda^{i} \sum_{\substack{[[a,b]] \leq [[8(m+1)-2i,2]]\\c\leq 4 (m+1)-1\\b\in[2, 8(m+1)+2-2i]}} | P^{a,c}_{b} (\vec{\kappa}) | ds .
\end{align}
By \eqref{epsbdyht0} in Lemma \ref{hilfslemmabdry} and \eqref{inftybdry} the boundary term can be absorbed in the right-hand side of \eqref{emlr4}, namely
{\allowdisplaybreaks
\begin{align*}
& \frac{d}{dt} \frac12 \int_{I} |\nabla_t^{m+1}f|^{2} ds + \int_{I} |\nabla_t^{m+1}f|^{2} ds + (1-\epsilon_2)(1-\epsilon_1) \int_{I} |\nabla_s^{4(m+1)} \vec{\kappa}|^2 ds \\   
&\leq C+ C(\epsilon_1, \epsilon_2, \zeta) \int_{I}
\sum_{\substack{[[a,b]] \leq [[8(m+1) -2,4]]\\c\leq 4 (m+1) -1\\b\in[2,
    8(m+1)+2]}} | P^{a,c}_{b} (\vec{\kappa})|\,
ds \, +\\
&\quad  + C(\epsilon_1, \epsilon_2, \zeta) \int_{I}  \sum_{i=1}^{2(m+1) +1} \lambda^{i} \sum_{\substack{[[a,b]] \leq [[8(m+1)-2i,2]]\\c\leq 4 (m+1)-1\\b\in[2, 8(m+1)+2-2i]}} | P^{a,c}_{b} (\vec{\kappa}) |  ds .
\end{align*}
}
The right-hand side of the above inequality can be estimated using \eqref{loblength}, \eqref{upbcurv} and Lemma \ref{lemineq} (with $k=4(m+1)$). Finally we get
\begin{align*}
\frac{d}{dt} \frac12 \int_{I} |\nabla_t^{m+1}f|^{2} ds  + \int_{I} | \nabla_t^{m+1} f|^{2 } ds  \leq C(n,\lambda,W_{\lambda}(f_0),\zeta,f_{+},f_{-},f_0,m) \, ,
\end{align*}
and hence $\| \nabla_t^{m+1}f\|_{L^{2}} \leq C(n,\lambda,W_{\lambda}(f_0),\zeta,f_{+},f_{-},f_0,m+1)$.
\smallskip

\emph{Sixth Step:} \underline{Bound on $\| \partial_{s}^{l} \vec{\kappa}\|_{L^{\infty}}$ for $l \in \mathbb{N}_0$.}

By the result in the previous step, Lemma \ref{bfm}, Lemma \ref{bvollder} and \eqref{upbcurv} we can find bounds 
\begin{equation}\label{kdertotal}
\| \nabla_{s}^{l} \vec{\kappa}\|_{L^{2}} \, , \| \partial_{s}^{l} \vec{\kappa}\|_{L^{2}} \leq C(n,\lambda,W_{\lambda}(f_0),\zeta,f_{+},f_{-},f_0,l) \, ,
\end{equation}
for any $l  \in \mathbb{N}_0$. 

We prove now that the length remains bounded in $[0,T)$ when $T<\infty$ for any  $\lambda \geq 0$ (recall that \eqref{upblength} holds for positive $\lambda$ only). 
As we will see below a control of the length (from below \emph{and} above) is needed when applying embedding theory.
Using \eqref{a}, \eqref{eqh}, \eqref{kdertotal} and Lemma \ref{lemineq} (with $A=0$, $B=4$, $k=1$) we get
\begin{equation*}
\frac{d}{dt} \mathcal{L}[f] + \lambda \int_I |\vec{\kappa}|^2 \, ds \leq C(n,\lambda,W_{\lambda}(f_0),\zeta,f_{+},f_{-},f_0) \, ,
\end{equation*}
from which we infer that
\begin{equation}\label{luplength2}
\mathcal{L}[f] \leq C(n,\lambda,W_{\lambda}(f_0),\zeta,f_{+},f_{-},f_0,T)  \, .
\end{equation}
By Lemma \ref{lemBGH} we find for any $l \in \mathbb{N}_0$
\begin{align*}
\| \partial_s^{l} \vec{\kappa}\|_{L^{\infty}} & \leq c(n) \|  \partial_s^{l+1} \vec{\kappa}\|_{L^1} + \frac{c(n)}{\mathcal{L}[f]} \|\partial_s^{l} \vec{\kappa}\|_{L^{1}} \\ 
& \leq c(n) \mathcal{L}[f]^{\frac12} \|  \partial_s^{l+1} \vec{\kappa}\|_{L^2} + \frac{c(n)}{\mathcal{L}[f]^{\frac12}} \|\partial_s^{l} \vec{\kappa}\|_{L^{2}} \, ,
\end{align*}
which together with \eqref{kdertotal}, \eqref{loblength} and \eqref{luplength2} yields
\begin{equation}\label{sept1}
\| \partial_s^{l} \vec{\kappa}\|_{L^{\infty}}  \leq  C(n,\lambda,W_{\lambda}(f_0),\zeta,f_{+},f_{-},f_0,T) \, .
\end{equation}
From \eqref{kdertotal} and \eqref{sept1} we also easily derive 
\begin{align}
\label{sept1b}
\|\partial_{s}^{l} V \|_{L^{2}}, \|\partial_{s}^{l} V \|_{L^{\infty}} \leq C(n,\lambda,W_{\lambda}(f_0),\zeta,f_{+},f_{-},f_0,T) 
\end{align}
for any $l  \in \mathbb{N}_0$.
\smallskip

\emph{Seventh Step:} \underline{Bound on $\| \partial_{x}^{l} \vec{\kappa}\|_{L^{\infty}}$ for $l \in \mathbb{N}_0$.}

Here we follow the reasoning presented in \cite[page 1234]{DKS}. For simplicity of notation let $\gamma:= |\partial_x f|$. Then, $\partial_x = \gamma \,\partial_s$. By induction it can be proven that for any function $h:\bar{I} \to \R$ or vector field $h:\bar{I} \to \R^{n}$, and for any $m \in \mathbb{N}$
\begin{equation}\label{sept4}
\partial_x^m h= \gamma^m \partial_s^m h + \sum_{j=1}^{m-1} P_{m-1}(\gamma, .. , \partial_x^{m-j} \gamma) \partial_s^j h \, ,
\end{equation}
with $P_{m-1}$ a polynomial of degree at most $m-1$. A bound on $\| \partial_{x}^{l} \vec{\kappa}\|_{L^{\infty}}$ follows from \eqref{sept4} taking $h= \vec{\kappa}$ and from bounds on  $\| \partial_{s}^{l} \vec{\kappa}\|_{L^{\infty}}$ (see \eqref{sept1}) and on $\|\partial_x^l \gamma\|_{L^{\infty}}$. 

Thus it remains  to estimate $\|\partial_x^l \gamma\|_{L^{\infty}}$ for $l \in \mathbb{N}_0$.
We start by showing that $\gamma = |\partial_x f|$ is uniformly bounded from above and below. This fact is also important because we want the flow to be regular over time. The function $\gamma$ satisfies the following parabolic equation
\begin{equation}\label{sept2}
\partial_t \gamma = \langle \tau , \partial_x \vec{V} \rangle =  - \langle \vec{\kappa}, \vec{V} \rangle \gamma \, .
\end{equation}
Moreover by assumption on the initial datum we know that $1/c_{0} \leq \gamma (0) \leq c_{0} $ for some positive $c_{0}$.
From the estimates \eqref{sept1} and \eqref{sept1b} it follows that  the coefficient $\| \langle \vec{\kappa}, \vec{V} \rangle\|_{L^{\infty}}$  in \eqref{sept2} is uniformly bounded and hence we  infer that $1/C \leq \gamma \leq C$, with $C$ having the same dependencies as the constant in~\eqref{sept1}.

In order to prove bounds on $\partial_x^m \gamma$ we proceed by induction. Let us assume that 
\begin{equation}\label{sept3}
\| \partial_x^m \gamma \|_{L^{\infty}} \leq C(n,\lambda,W_{\lambda}(f_0),\zeta,f_{+},f_{-},f_0,T,m) \, \mbox{ for some }m\geq 0 \, .
\end{equation}  
Choosing $h=\langle \vec{\kappa}, \vec{V} \rangle$ in \eqref{sept4}, the induction assumption and \eqref{sept1} yield that 
\begin{equation}\label{sept5}
\| \partial_x^{i} \langle \vec{\kappa}, \vec{V} \rangle \|_{L^{\infty}} \leq C(n,\lambda,W_{\lambda}(f_0),\zeta,f_{+},f_{-},f_0,T,m)  
\end{equation}
for all $ 0 \leq i \leq m+1 $. 
Differentiating \eqref{sept2} $(m+1)$-times with respect to $x$, we find
\begin{equation*}
\partial_t \partial_x^{m+1} \gamma = - \langle \vec{\kappa}, \vec{V} \rangle \partial_x^{m+1} \gamma - \sum_{\substack{i+j=m+1\\j \leq m}} c(i,j,m) \partial_x^{i}(\langle \vec{\kappa}, \vec{V} \rangle) \partial_x^j \gamma\, , 
\end{equation*}
for some coefficients $c(i,j,m)$. Together
 with \eqref{sept3}, \eqref{sept5} we derive
\begin{equation*}
\partial_t \partial_x^{m+1} \gamma \leq  - \langle \vec{\kappa}, \vec{V} \rangle \partial_x^{m+1} \gamma +  C(n,\lambda,W_{\lambda}(f_0),\zeta,f_{+},f_{-},f_0,T,m)\, , 
\end{equation*}
which implies
\begin{equation*}
\| \partial_x^{m+1} \gamma \|_{L^{\infty}} \leq C(n,\lambda,W_{\lambda}(f_0),\zeta,f_{+},f_{-},f_0,T,m+1) \, .
\end{equation*}

Finally note that from \eqref{sept1b}  and \eqref{sept4} we obtain  also uniform estimates for $\|\partial_{x}^{m}\vec{V}\|_{L^{\infty}}$.
\smallskip

\emph{Eighth Step: }\underline{Long-time existence.}

The uniform $L^{\infty}$-bounds on the curvature $\vec{\kappa}$, the velocity $\vec{V}$, $\gamma$, and all their derivatives, allow for a smooth  extension of $f$ up  to $t=T$ and then by the short-time existence result even beyond. In view of this contradiction, the flow must exist globally. 
\smallskip   

\emph{Ninth Step: }\underline{Subconvergence to a critical point for $\lambda >0$.}

Here we follow the reasoning given in \cite[Page 1235]{DKS}. Since $\lambda >0$ we can use \eqref{upblength} instead of \eqref{luplength2} to estimate the length from above, so that together with \eqref{loblength} we obtain
\begin{equation}
\label{fullboundlength}
|f_{+}-f_{-}| \leq \mathcal{L}[f] \leq C(\lambda, W_{\lambda}(f_{0})) \qquad\text{ for all } t \in [0,\infty).
\end{equation}
 In this way  we get for \eqref{sept1} and \eqref{sept1b}  estimates independent of $T$, thus 
\begin{equation}\label{sept7}
\| \partial_s^{l} \vec{\kappa}\|_{L^{\infty}}, \| \partial_s^{l}
\vec{V}\|_{L^{\infty}}   \leq
C(n,\lambda,W_{\lambda}(f_0),\zeta,f_{+},f_{-},f_0) 
\end{equation}
for any $l \in \mathbb{N}_0$, for all $t \in [0,\infty)$. Next we observe that $\| f\|_{L^{\infty}} \leq C$ for all $t \in [0,\infty)$ due to the upper bound on the length and the fixed end-points of the curve. Hence one naturally expects some sort of convergence of subsequences. Using \eqref{fullboundlength} and reparametrizing $f$ by arc-length in order to have a control on the parametrization (which otherwise could become non regular at $T=\infty$) one can show  that there exist sequences of times $t_i \rightarrow \infty$ such that the curves $f(t_i, \cdot)$ converges smoothly to a smooth curve $f_{\infty}$.

 It remains to show that $f_{\infty}$ is a critical point for the Willmore-Helfrich functional, that is, a solution to $\vec{V}=0$. We prove this by considering the function  $u(t) := \| \vec{V}\|_{L^2}^2(t)$ and showing that $\lim_{t \rightarrow \infty} u(t)=0$. First observe that
\begin{equation*} 
 \frac{d}{dt} u(t) = - \int_{I} |\vec{V}|^2 \langle \vec{\kappa}, \vec{V} \rangle \, ds + \int_{I} \langle \vec{V}, \nabla_t \vec{V} \rangle \, ds . 
\end{equation*}
Since $\nabla_t \vec{V} = \nabla_t^2 f$ we infer from \eqref{upblength},
\eqref{sept7}  and the bounds derived in the Fourth Step that
\begin{equation*} 
\Big|\frac{d}{dt} u(t)\Big| \leq C(n,\lambda,W_{\lambda}(f_0),\zeta,f_{+},f_{-},f_0) \, . 
\end{equation*}
On the other hand from 
\begin{equation*}
\frac{d}{dt} W_{\lambda}(t) = - \int_{I} |\vec{V}|^2\, ds \, ,
\end{equation*}
(see proof of Lemma \ref{endecr}) it follows that $u \in L^1((0,\infty))$ and hence necessarily $u(t) \rightarrow 0$ for $t \rightarrow \infty$. The limit curve $f_{\infty}$ is therefore a critical point of the Willmore-Helfrich functional.
\end{proof}


\renewcommand{\thesection}{}

\appendix\renewcommand{\thesection}{\Alph{section}}
\setcounter{equation}{0}
\renewcommand{\theequation}{\Alph{section}\arabic{equation}}


\section{First variation and decrease of the energy}
\label{AppVar}

Let $f:\bar{I} \to \R^{n}$
be a regular parametrization of a smooth curve in $\R^{n}$.
Define the following functionals
\begin{align*}
\mathcal{L} (f) & := \int_{I}  ds = \int_{I} |\partial_{x} f| dx \,,\\ 
\mathcal{E} (f) &:=  \frac12 \int_{I} |\vec{\kappa}|^2 \, ds \,, \qquad \qquad 
\mathcal{K}_{\zeta} (f) := \int_{I} \langle \vec{\kappa}, \zeta \rangle \, ds  \, ,
\end{align*}
with $\zeta \in \mathbb{R}^n$ a fixed vector.

\begin{lemma}[The first variation]\label{lemfv}
Suppose $f:\bar{I}=[0,1] \rightarrow \mathbb{R}^n$ is a smooth regular curve in $\mathbb{R}^n$. Then for any  perturbation of $f$ of the kind $f_{\epsilon}=f+\epsilon \eta$ with $\eta \in C^{\infty}(\bar{I}; \mathbb{R}^n)$  and satisfying $\eta(0)=\eta(1)=0$, one has the following formulas
\begin{align*}
\left. \frac{d}{d \epsilon} \mathcal{L}[f_{\epsilon}] \right|_{\epsilon=0} & = - \int_{I} \langle \vec{\kappa}, \eta \rangle \; ds , \quad \quad \left. \frac{d}{d \epsilon} \mathcal{K}_{\zeta}[f_{\epsilon}] \right|_{\epsilon=0}  = [\langle \nabla_s \eta, \zeta \rangle  ]_{0}^{1} \, ,\\
\left. \frac{d}{d \epsilon} \mathcal{E}[f_{\epsilon}] \right|_{\epsilon=0} & = \int_{I} \langle \nabla_s^2\vec{\kappa}+\frac12 |\vec{\kappa}|^2 \vec{\kappa}, \eta \rangle \; ds + [ \langle \nabla_s \eta, \vec{\kappa} \rangle  ]_{0}^{1} .\end{align*}
In particular, $f$ is a critical point for the Willmore-Helfrich functional given in \eqref{Wh} among all curves with fixed endpoints $f_{-}, f_{+} \in \mathbb{R}^n$ if $f$ satisfies~\eqref{ebvp}.
\end{lemma}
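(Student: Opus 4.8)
The plan is to compute the three first variations $\frac{d}{d\epsilon}\mathcal{L}$, $\frac{d}{d\epsilon}\mathcal{K}_{\zeta}$, $\frac{d}{d\epsilon}\mathcal{E}$ separately and then assemble them, reading off the Euler--Lagrange equation and the natural boundary condition at the end. Throughout I would treat the perturbation parameter $\epsilon$ exactly as the time variable $t$ in Lemma \ref{lemform}, so that $\partial_\epsilon f = \eta$ plays the role of the velocity, with tangential part $\varphi = \langle \eta, \tau \rangle$ and normal part $\vec{V} = \eta - \varphi\,\tau$. The single most useful preliminary observation is that, since $|\tau|=1$, differentiating $\tau = \partial_x f_\epsilon/|\partial_x f_\epsilon|$ in $\epsilon$ gives $\partial_\epsilon \tau = \partial_s \eta - \langle \partial_s \eta, \tau \rangle \tau = \nabla_s \eta$; comparing with \eqref{c} this also shows $\nabla_s \vec{V} = \nabla_s \eta$ at any point where $\varphi = 0$, in particular at the boundary, since $\eta$ vanishes there.

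For $\mathcal{L}$ I would use \eqref{a}, i.e. $\partial_\epsilon(ds) = (\partial_s \varphi - \langle \vec{\kappa}, \vec{V} \rangle)\, ds$, integrate over $I$, and note that the total-derivative term contributes $[\varphi]_0^1 = 0$ because $\eta = 0$ on $\partial I$, leaving $-\int_I \langle \vec{\kappa}, \vec{V} \rangle\, ds = -\int_I \langle \vec{\kappa}, \eta \rangle\, ds$ (the tangential part of $\eta$ being invisible to the normal field $\vec{\kappa}$). The functional $\mathcal{K}_{\zeta}$ is even simpler: since $\zeta$ is constant, $\langle \vec{\kappa}, \zeta \rangle\, ds = \partial_s \langle \tau, \zeta \rangle\, ds$, so $\mathcal{K}_{\zeta}(f) = [\langle \tau, \zeta \rangle]_0^1$ is purely a boundary quantity, and differentiating together with $\partial_\epsilon \tau = \nabla_s \eta$ yields $[\langle \nabla_s \eta, \zeta \rangle]_0^1$ at once.

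The genuine computation is the variation of $\mathcal{E}$. I would start from $\partial_\epsilon(\tfrac12 |\vec{\kappa}|^2) = \langle \vec{\kappa}, \nabla_\epsilon \vec{\kappa} \rangle$ (the tangential correction in \eqref{d} drops against the normal field $\vec{\kappa}$), insert the evolution \eqref{e} for $\nabla_\epsilon \vec{\kappa}$, and combine with \eqref{a} for $\partial_\epsilon(ds)$. The terms carrying the tangential speed $\varphi$ should be shown to disappear: writing $\langle \vec{\kappa}, \nabla_s \vec{\kappa}\rangle = \tfrac12 \partial_s |\vec{\kappa}|^2$, the contribution $\int_I \varphi \langle \vec{\kappa}, \nabla_s \vec{\kappa}\rangle\, ds$ together with $\tfrac12 \int_I |\vec{\kappa}|^2 \partial_s \varphi\, ds$ combines into the total derivative $\tfrac12\int_I \partial_s(\varphi |\vec{\kappa}|^2)\, ds = \tfrac12[\varphi|\vec{\kappa}|^2]_0^1$, which vanishes. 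What remains is
\begin{equation*}
\frac{d}{d\epsilon} \mathcal{E}[f_\epsilon]\big|_{\epsilon=0} = \int_I \langle \vec{\kappa}, \nabla_s^2 \vec{V} \rangle\, ds + \frac12 \int_I |\vec{\kappa}|^2 \langle \vec{\kappa}, \vec{V} \rangle\, ds.
\end{equation*}
Integrating the first term by parts twice (using $\partial_s \langle \vec{a}, \vec{b} \rangle = \langle \nabla_s \vec{a}, \vec{b} \rangle + \langle \vec{a}, \nabla_s \vec{b} \rangle$ for normal fields) produces $\int_I \langle \nabla_s^2 \vec{\kappa}, \vec{V} \rangle\, ds + [\langle \vec{\kappa}, \nabla_s \vec{V} \rangle]_0^1 - [\langle \nabla_s \vec{\kappa}, \vec{V} \rangle]_0^1$. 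Since $\vec{V}=0$ on $\partial I$ the last bracket vanishes, while $\nabla_s \vec{V} = \nabla_s \eta$ there turns the remaining bracket into $[\langle \nabla_s \eta, \vec{\kappa} \rangle]_0^1$; replacing $\vec{V}$ by $\eta$ in the interior integrals (legitimate because the integrands are normal fields) gives the claimed formula.

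Finally, with $W_\lambda = \mathcal{E} - \mathcal{K}_{\zeta} + \lambda \mathcal{L}$ I would sum the three variations to obtain $\int_I \langle \nabla_s^2 \vec{\kappa} + \tfrac12 |\vec{\kappa}|^2 \vec{\kappa} - \lambda \vec{\kappa}, \eta \rangle\, ds + [\langle \nabla_s \eta, \vec{\kappa} - \zeta \rangle]_0^1$. If $f$ solves \eqref{ebvp}, then the bulk integrand equals $-\langle \vec{V}, \eta\rangle = 0$, and, because $\vec{\kappa} - \zeta$ has vanishing normal part at the boundary while $\nabla_s \eta$ is normal, the boundary term vanishes too; hence $f$ is critical. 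I expect the only delicate bookkeeping to lie in the energy step: tracking which tangential and boundary contributions cancel, and correctly recognizing that it is precisely $[\langle \vec{\kappa}, \nabla_s \vec{V}\rangle]_0^1$ (and not $[\langle \nabla_s \vec{\kappa}, \vec{V}\rangle]_0^1$) that survives and equals $[\langle \nabla_s \eta, \vec{\kappa}\rangle]_0^1$.
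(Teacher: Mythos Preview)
Your proof is correct and follows essentially the same route as the paper: compute the three variations separately via the evolution formulas of Lemma~\ref{lemform} (treating $\epsilon$ as $t$), integrate by parts, and use $\eta=0$ on $\partial I$ to kill the unwanted boundary terms. Your treatment of $\mathcal{K}_\zeta$ as a pure boundary quantity is a slight shortcut over the paper's direct use of $\partial_\epsilon\vec{\kappa}$, and your explicit normal/tangential decomposition $\eta=\vec{V}+\varphi\tau$ makes the cancellation of $\varphi$-terms in $\mathcal{E}$ a bit more transparent, but these are organizational differences only.
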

\begin{proof}
Since 
$\left. \frac{d}{d \epsilon} (ds_{\epsilon})  \right|_{\epsilon=0} =\langle \tau,
\partial_{s} \eta \rangle ds $,
we have
\begin{equation*}
\left. \frac{d}{d \epsilon} \mathcal{L}[f_{\epsilon}]\right|_{\epsilon=0}  = \int_{I} \langle \tau , \partial_{s} \eta \rangle ds = - \int_{I} \langle \vec{\kappa}, \eta \rangle ds ,
\end{equation*}
since $\eta$ is zero on the boundary. Using that
$
\left. \frac{d}{d \epsilon} \vec{\kappa}_{\epsilon} \right|_{\epsilon=0}  =
\partial_{s}\nabla_{s} \eta  - \langle \tau, \partial_{s} \eta \rangle \vec{\kappa} 
$
the expression for the first variation of $\mathcal{K}_{\zeta}$ follows immediately.
Finally, for the elastic energy we derive
\begin{align*}
\left. \frac{d}{d \epsilon} \mathcal{E}[f_{\epsilon}] \right|_{\epsilon=0} & = 
\int_{I} \langle \vec{\kappa}, \partial_{s} \nabla_{s} \eta \rangle ds 
-\frac{1}{2} \int_{I} |\vec{\kappa}|^{2} \langle \tau, \partial_{s} \eta \rangle ds\\
& =[ \langle \nabla_s \eta, \vec{\kappa} \rangle  ]_{0}^{1} -[ \langle \nabla_s \vec{\kappa}, \eta \rangle  ]_{0}^{1} + \int_{I}\langle \eta, \partial_{s} \nabla_{s} \vec{\kappa} \rangle ds  \\
&\qquad  -\frac{1}{2} \int_{I}|\vec{\kappa}|^{2} \partial_{s}(\langle \tau,  \eta \rangle) ds +\frac{1}{2} \int_{I}|\vec{\kappa}|^{2} \langle \vec{\kappa},  \eta \rangle ds\\
& = [ \langle \nabla_s \eta, \vec{\kappa} \rangle  
-\langle \nabla_s \vec{\kappa}, \eta \rangle  
-\frac{1}{2} 
|\vec{\kappa}|^{2}\langle \tau, \eta \rangle  ]_{0}^{1} +
\int_{I} \langle \nabla_s^2\vec{\kappa}+\frac12 |\vec{\kappa}|^2 \vec{\kappa}, \eta \rangle  ds.
\end{align*}
The second part of the claim follows directly from the formulas of the first variation.
\end{proof}

\begin{lemma}[The energy decreases]\label{endecr}
Let $f:[0,T) \times \bar{I} \rightarrow \mathbb{R}^n$ be a sufficiently smooth solution of \eqref{eqh} satisfying \eqref{bch} for all $t$. Let the Willmore-Helfrich energy be defined as in \eqref{Wh}. Then, 
\begin{equation*}
\frac{d}{dt} W_{\lambda}(f) \leq 0 \, .
\end{equation*}
\end{lemma}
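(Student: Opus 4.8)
The plan is to write $W_{\lambda}=\mathcal{E}-\mathcal{K}_{\zeta}+\lambda\mathcal{L}$ and to differentiate each summand along the flow, the point being that the result collapses to $-\int_{I}|\vec{V}|^{2}\,ds$ once one uses both the definition of $\vec{V}$ in \eqref{eqh} and the boundary condition \eqref{bch}.

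First I would note that because the endpoints $f(t,0)=f_{-}$, $f(t,1)=f_{+}$ are fixed in time we have $\partial_{t}f=0$ at $x\in\{0,1\}$, and since the flow \eqref{eqh} is purely normal, $\eta:=\partial_{t}f=\vec{V}$ is an admissible normal variation vanishing on $\partial I$. Consequently $\frac{d}{dt}W_{\lambda}(f(t))$ equals the first variation of $W_{\lambda}$ in the direction $\eta=\vec{V}$, so the three formulas of Lemma \ref{lemfv} can be read off directly. Summing them gives
\begin{equation*}
\frac{d}{dt}W_{\lambda}(f)=\int_{I}\Big\langle \nabla_{s}^{2}\vec{\kappa}+\tfrac12|\vec{\kappa}|^{2}\vec{\kappa}-\lambda\vec{\kappa},\,\vec{V}\Big\rangle\,ds+\big[\langle\nabla_{s}\vec{V},\vec{\kappa}-\zeta\rangle\big]_{0}^{1}.
\end{equation*}
The interior integrand is exactly $\langle-\vec{V},\vec{V}\rangle=-|\vec{V}|^{2}$ by the definition $\vec{V}=-\nabla_{s}^{2}\vec{\kappa}-\tfrac12|\vec{\kappa}|^{2}\vec{\kappa}+\lambda\vec{\kappa}$. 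For the boundary term I would use the natural boundary condition \eqref{bch}: there $\vec{\kappa}-\zeta=-\langle\zeta,\tau\rangle\tau$ is tangential, whereas $\nabla_{s}\vec{V}$ is a normal field, so their inner product vanishes at both endpoints. Hence $\frac{d}{dt}W_{\lambda}(f)=-\int_{I}|\vec{V}|^{2}\,ds\le 0$, which is the claim (and also records the sharper identity used later in the Ninth Step).

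The only genuinely delicate point is the boundary term: the boundary contributions coming separately from $\mathcal{E}$ and from $\mathcal{K}_{\zeta}$ do not vanish on their own, and it is precisely their combination $[\langle\nabla_{s}\vec{V},\vec{\kappa}-\zeta\rangle]_{0}^{1}$, killed by \eqref{bch}, that makes the energy monotone --- this is exactly why \eqref{bch} deserves to be called a natural boundary condition. If one prefers to avoid appealing to the static first-variation formulas for a time-dependent direction, the same computation can be carried out directly from Lemma \ref{lemform}: with $\varphi\equiv0$ one has $\partial_{t}(ds)=-\langle\vec{\kappa},\vec{V}\rangle\,ds$ and $\nabla_{t}\vec{\kappa}=\nabla_{s}^{2}\vec{V}+\langle\vec{\kappa},\vec{V}\rangle\vec{\kappa}$, and two integrations by parts (each producing a boundary term) reproduce the display above.
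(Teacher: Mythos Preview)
Your proof is correct and follows essentially the same route as the paper's own argument: both arrive at $\frac{d}{dt}W_{\lambda}(f)=-\int_{I}|\vec{V}|^{2}\,ds$ after producing the boundary term $[\langle\nabla_{s}\vec{V},\vec{\kappa}-\zeta\rangle]_{0}^{1}$ and killing it with \eqref{bch}. The only cosmetic difference is that the paper carries out the computation directly from the evolution formulas \eqref{a}, \eqref{e0}, \eqref{e} of Lemma~\ref{lemform} (exactly the alternative you sketch in your last paragraph), whereas you invoke the first-variation identities of Lemma~\ref{lemfv} with $\eta=\vec{V}$; since $\vec{V}$ vanishes on $\partial I$, this is legitimate and the two computations are equivalent.
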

\begin{proof}
From the definition of the energy and Lemma \ref{lemform} formulas \eqref{e}, \eqref{e0}, \eqref{a}  we obtain
\begin{align*}
\frac{d}{dt} W_{\lambda}(f) & = \int_{I} \left( \langle \vec{\kappa}, \nabla_{t} \vec{\kappa} \rangle - \langle \zeta, \partial_{t} \vec{\kappa} \rangle \right) ds + \int_{I} \left( \frac12 |\vec{\kappa}|^2-\langle \zeta,\vec{\kappa} \rangle +\lambda \right) \partial_{t}(ds) \\ 
& =  \int_{I} \left( \langle \vec{\kappa}, \nabla_s^2 \vec{V} + \langle \vec{\kappa}, \vec{V} \rangle \vec{\kappa} \rangle - \langle \zeta, \partial_s \nabla_s \vec{V} + \langle \vec{\kappa}, \vec{V} \rangle \vec{\kappa}  \rangle \right) ds \\
&\quad  - \int_{I} \left( \frac12 |\vec{\kappa}|^2-\langle \zeta,\vec{\kappa} \rangle +\lambda \right) \langle \vec{\kappa}, \vec{V} \rangle  ds \\
& =  \int_{I}  \langle \vec{\kappa}, \nabla_s^2 \vec{V}  \rangle  ds
 - \int_{I} \langle \zeta, \partial_s \nabla_s \vec{V} \rangle ds + \int_{I}  \langle \frac12  \vec{\kappa} | \vec{\kappa}|^2 -\lambda \vec{\kappa}, \vec{V} \rangle  ds  ,
\end{align*}
and integrating by parts
\begin{align*}
\frac{d}{dt} W_{\lambda}(f) & = [ \langle \vec{\kappa} -\zeta, \nabla_s \vec{V}  \rangle ]_0^1
 -  \int_{I}  \langle \nabla_s \vec{\kappa}, \nabla_s \vec{V}  \rangle  ds  + \int_{I}  \langle \frac12  \vec{\kappa} | \vec{\kappa}|^2 -\lambda \vec{\kappa}, \vec{V} \rangle  ds \\
& =  -  [ \langle \nabla_s \vec{\kappa}, \vec{V}  \rangle ]_0^1  + \int_{I}  \langle \nabla_{s}^2 \vec{\kappa} + \frac12  \vec{\kappa} | \vec{\kappa}|^2 -\lambda \vec{\kappa}, \vec{V} \rangle  ds \\
& = - \int_{I}  | \vec{V} |^2  ds \leq 0 ,
\end{align*}
using the boundary conditions, the fact that $\vec{V}$ is zero at the boundary and the equation~\eqref{eqh}.
\end{proof}


\section{Proof of Lemma \ref{lemLin}}\label{AppLin}


\begin{proof}[Proof of \eqref{E1} in Lemma \ref{lemLin}]
For simplicity of notation let $\vec{\xi}^{l}=\nabla_s^{l}\vec{\kappa}$. We prove the claim by induction on $k$. For $k=1$ and any $l \in \mathbb{N}_{0}$ we have by~\eqref{f}
\begin{align}\nonumber
[ \nabla_{t} \nabla_{s}  -\nabla_{s} \nabla_{t}]  \vec{\xi}^{l} & =
\langle \vec{\kappa} , -\nabla_s^2 \vec{\kappa} -\frac12 |\vec{\kappa}|^2 \vec{\kappa} + \lambda \vec{\kappa} \rangle \nabla_{s} \vec{\xi}^{l} \\ \nonumber
& \quad  + \langle \vec{\kappa}, \vec{\xi}^{l} \rangle \left( -\nabla_s^3 \vec{\kappa} -\frac12 |\vec{\kappa}|^2 \nabla_s  \vec{\kappa} - \langle \vec{\kappa}, \nabla_s \vec{\kappa} \rangle \vec{\kappa}  + \lambda \nabla_s \vec{\kappa} \right) \\ \nonumber
& \quad - \langle \left( -\nabla_s^3 \vec{\kappa} -\frac12 |\vec{\kappa}|^2 \nabla_s  \vec{\kappa} - \langle \vec{\kappa}, \nabla_s \vec{\kappa} \rangle \vec{\kappa}  + \lambda \nabla_s \vec{\kappa} \right) , \vec{\xi}^{l} \rangle \vec{\kappa} \\ \nonumber
 & = P_{3}^{l+3,\max\{l+1,2\}}(\vec{\kappa}) +  P_{5}^{l+1,l+1}(\vec{\kappa}) +  \lambda P_{3}^{l+1,l+1}(\vec{\kappa}) \\ \nonumber
& \quad +  P_{3}^{l+3,\max\{l,3\}}(\vec{\kappa}) +  P_{5}^{l+1,\max \{ l,1 \}}(\vec{\kappa}) + \lambda  P_{3}^{l+1,\max \{ l,1 \}}(\vec{\kappa})\\ \label{riemcen}
& =  \sum_{\substack{[[a,b]] \leq [[l+1+2,3]]\\ c \leq \max\{l,2\}+1\\b\in[3,5], odd}} P^{a,c}_{b} (\vec{\kappa}) + \lambda
\sum_{\substack{[[a,b]] \leq [[l+1,3]]\\ c \leq l+1\\b=3}} P^{a,c}_{b} (\vec{\kappa}) 
\, .
\end{align}

We assume that the claim holds true for some $k\geq 1$ and any $l \in \mathbb{N}_0$. Then for any $ l \in \mathbb{N}_0$
\begin{align*}
[ &\nabla_{t} \nabla_{s}^{k+1} -\nabla_{s}^{k+1} \nabla_{t}]  \vec{\xi}^{l}  = \nabla_{s} \left[ (\nabla_{t} \nabla_{s}^{k}  -\nabla_{s}^{k} \nabla_{t}) \vec{\xi}^{l} \right] + [\nabla_{t} \nabla_{s} -\nabla_s \nabla_t] \nabla_s^{k} \vec{\xi}^{l} \\
& = \nabla_{s} \Big[
\sum_{\substack{[[a,b]] \leq [[l+k+2,3]]\\c\leq \max\{l,2\}+k\\b\in[3,5], odd}} P^{a,c}_{b} (\vec{\kappa})  + \lambda
\sum_{\substack{[[a,b]] \leq [[l+k,3]]\\ c \leq l+k\\b=3}} P^{a,c}_{b} (\vec{\kappa})
\Big]  
+ [ \nabla_t \nabla_s -\nabla_s \nabla_t ]\vec{\xi}^{k+l} \\
& = \sum_{\substack{[[a,b]] \leq [[l+k+3,3]]\\c\leq \max\{l,2\}+k+1\\b\in[3,5], odd}} P^{a,c}_{b} (\vec{\kappa}) \,  + \lambda
\sum_{\substack{[[a,b]] \leq [[l+k+1,3]]\\ c \leq l+k+1\\b=3}} P^{a,c}_{b} (\vec{\kappa})
,
\end{align*}
using in the last step \eqref{riemcen} with $k+l$ instead of $l$.
\end{proof}


\begin{proof}[Proof of \eqref{E2} in Lemma \ref{lemLin}]
We prove the claim by induction on $m$.
Since $P^{\mu,d}_{\nu}$ is linear combination of terms of the type 
$$ \langle \nabla_{s}^{i_{1}}\vec{\kappa}, \nabla_{s}^{i_{2}}\vec{\kappa} \rangle \ldots 
\langle \nabla_{s}^{i_{\nu-2}} \vec{\kappa}, \nabla_{s}^{i_{\nu-1}}\vec{\kappa} \rangle \nabla_{s}^{i_{\nu}}\vec{\kappa} $$
with $i_1+ \dots +i_{\nu}=\mu$ and $\max \{ i_{j} \} \leq d$, by Leibnitz's rule we need to understand terms of the kind
$$ \langle \nabla_{s}^{i_{1}}\vec{\kappa}, \nabla_{s}^{i_{2}}\vec{\kappa} \rangle \ldots \langle \nabla_t \nabla_{s}^{i_{j}} \vec{\kappa}, ..  \rangle \dots 
\langle \nabla_{s}^{i_{\nu-2}} \vec{\kappa}, \nabla_{s}^{i_{\nu-1}}\vec{\kappa} \rangle \nabla_{s}^{i_{\nu}}\vec{\kappa} $$
for $j \in \{1, \dots, \nu-1\}$ or 
\begin{equation}\label{bel1}
 \langle \nabla_{s}^{i_{1}}\vec{\kappa}, \nabla_{s}^{i_{2}}\vec{\kappa} \rangle \dots 
\langle \nabla_{s}^{i_{\nu-2}} \vec{\kappa}, \nabla_{s}^{i_{\nu-1}}\vec{\kappa} \rangle \nabla_{t} \nabla_{s}^{i_{\nu}}\vec{\kappa}
\end{equation}
with as before $i_1+ \dots +i_{\nu}=\mu$ and $\max \{ i_{k} \} \leq d$. If $i_{j}=0$, by \eqref{e} 
\begin{align}\nonumber
\nabla_{t} \nabla_s^{i_j} \vec{\kappa}= \nabla_{t} \vec{\kappa} & = -\nabla_s^4 \vec{\kappa} - \frac12 |\vec{\kappa}|^2 \nabla_s^2 \vec{\kappa}- 2 \langle \vec{\kappa}, \nabla_s \vec{\kappa} \rangle \nabla_s \vec{\kappa} - |\nabla_s \vec{\kappa}|^2 \vec{\kappa} \\ \nonumber
& \quad - 2 \langle \vec{\kappa}, \nabla_s^2 \vec{\kappa} \rangle \vec{\kappa} + \lambda \nabla_s^2 \vec{\kappa} - \frac12 |\vec{\kappa}|^4 \vec{\kappa} + \lambda |\vec{\kappa}|^2 \vec{\kappa} \\ \label{ee}
& = \sum_{\substack{[[a,b]] \leq [[4,1]]\\c\leq 4\\b\in [1,5], odd}} P^{a,c}_{b} (\vec{\kappa}) 
+ \lambda \sum_{\substack{[[a,b]] \leq [[2,1]]\\c\leq 2\\b\in [1,3], odd }} P^{a,c}_{b} (\vec{\kappa}) \, ,
\end{align}  
while if $i_{j} \geq 1$ we find using \eqref{E1}, \eqref{ee}
\begin{align*}
\nabla_t \nabla_{s}^{i_{j}} \vec{\kappa} & = \nabla_{s}^{i_{j}} \nabla_t  \vec{\kappa} + \sum_{\substack{[[a,b]] \leq [[i_j+2,3]]\\c\leq 2+i_1\\b \in [3,5], odd}} P^{a,c}_{b} (\vec{\kappa})  + \lambda
\sum_{\substack{[[a,b]] \leq [[i_j,3]]\\ c \leq i_1\\b=3}} P^{a,c}_{b} (\vec{\kappa}) \\
& =  \sum_{\substack{[[a,b]] \leq [[4+i_{j},1]]\\c\leq 4+i_{j}\\b\in [1,5], odd}} P^{a,c}_{b} (\vec{\kappa}) 
+ \lambda \sum_{\substack{[[a,b]] \leq [[2+i_{j},1]]\\c\leq 2+i_{j}\\b\in [1,3], odd }} P^{a,c}_{b} (\vec{\kappa}) \, ,
\end{align*}
and this formula is valid also for $i_{j}=0$. It follows that
\begin{align*}
&  \langle \nabla_{s}^{i_{1}}\vec{\kappa}, \nabla_{s}^{i_{2}}\vec{\kappa} \rangle \ldots \langle \nabla_t \nabla_{s}^{i_{j}} \vec{\kappa}, .. \rangle \dots 
\langle \nabla_{s}^{i_{\nu-2}} \vec{\kappa}, \nabla_{s}^{i_{\nu-1}}\vec{\kappa} \rangle \nabla_{s}^{i_{\nu}}\vec{\kappa} \\
& =  \sum_{\substack{[[a,b]] \leq [[\mu+4,\nu]]\\c\leq 4+d\\b \in [\nu,\nu+4], odd}} P^{a,c}_{b} (\vec{\kappa})  + \lambda
\sum_{\substack{[[a,b]] \leq [[\mu+2,\nu]]\\ c \leq d+2\\b \in [\nu,2+\nu]}} P^{a,c}_{b} (\vec{\kappa}) 
\end{align*}
for any $ j \in \{1, \dots, \nu -1\}$ and the same formula holds for the term in \eqref{bel1}. We get
\begin{align} 
\nabla_{t} P^{\mu,d}_{\nu} (\vec{\kappa}) &  = \sum_{\substack{[[a,b]] \leq [[\mu+4,\nu]]\\c\leq 4+d\\b \in [\nu,\nu+4], odd }} P^{a,c}_{b} (\vec{\kappa}) +\lambda \sum_{\substack{[[a,b]] \leq [[\mu+2,\nu]]\\c\leq 2+d\\b \in [\nu,2+\nu] }} P^{a,c}_{b} (\vec{\kappa})  \, \label{riemcen1},
\end{align}
that is \eqref{E2} for $m=1$.

Assuming that the claim holds true for some $m\geq 1$ and any $\nu \in \mathbb{N}$, $\nu$ odd, $\mu, d \in \mathbb{N}_0$ we find by \eqref{riemcen1}
\begin{align*}
& \nabla_{t}^{m+1} P^{\mu,d}_{\nu} (\vec{\kappa}) \\
& = \sum_{i=0}^{m} \lambda^{i} \sum_{\substack{[[a,b]] \leq [[4m + \mu -2 i,\nu]]\\c\leq 4 m -2i + d\\b \in [\nu,\nu+4m-2i], odd}}  \nabla_{t}  P^{a,c}_{b} (\vec{\kappa}) \\
& = \sum_{i=0}^{m} \lambda^{i} \sum_{\substack{[[a,b]] \leq [[4m + \mu -2 i,\nu]]\\c\leq 4 m -2i + d\\b \in [\nu, \nu+4m-2i], odd}}   \Big(\sum_{\substack{[[ \alpha, \beta]] \leq [[a+4,b]]\\ \gamma \leq 4+ c \\ \beta \in [b,b+4], odd }} P^{\alpha,\gamma}_{\beta} (\vec{\kappa}) +\lambda \sum_{\substack{[[\alpha,\beta]] \leq [[a+2,b]]\\ \gamma\leq 2+c\\\beta \in [b,2+b] }} P^{\alpha,\gamma}_{\beta} (\vec{\kappa})  \Big) \\
& = \sum_{i=0}^{m} \lambda^{i} \hspace{-.2cm} \sum_{\substack{[[a,b]] \leq [[4 (m+1) + \mu -2 i,\nu]]\\c\leq 4 (m+1) -2i + d\\b \in [\nu,\nu+4(m+1)-2i], odd}} \hspace{-.2cm}  P^{a,c}_{b} (\vec{\kappa}) 
+ \sum_{i=0}^{m} \lambda^{i+1}  \hspace{-.2cm} \sum_{\substack{[[a,b]] \leq
    [[4 (m+1) + \mu -2 (i+1),\nu]]\\c\leq 4 (m+1) -2(i+1) + d\\b \in [\nu,
    \nu+4(m+1)-2(i+1)], odd}}  \hspace{-.2cm} P^{a,c}_{b} (\vec{\kappa})  \, .
\end{align*}
The claim follows.
\end{proof}


\begin{proof}[Proof of \eqref{E2sum} in Lemma \ref{lemLin}]
Indeed, formula \eqref{riemcen1} implies that
\begin{align*}
& \nabla_{t} \sum_{\substack{[[a,b]]\leq [[A,B]]\\c\leq C\\b\in [N,M], odd}}P^{a,c}_{b} (\vec{\kappa}) \\
& = \sum_{\substack{[[a,b]] \leq [[A,B]]\\c\leq C\\b\in[N,M],odd}} \Big( \sum_{\substack{[[\alpha,\beta]] \leq [[a+4,b]]\\\gamma\leq 4+c \\\beta \in [b,b+4], odd }} P^{\alpha,\gamma}_{\beta} (\vec{\kappa}) +\lambda \sum_{\substack{[[\alpha,\beta]] \leq [[a+2,b]]\\\gamma\leq 2+c\\\beta \in [b,2+b] }} P^{\alpha,\gamma}_{\beta} (\vec{\kappa})  \Big)\\
& =  \sum_{\substack{[[a,b]] \leq [[A+4,B]]\\c\leq C+4\\b\in[N,M+4],odd}}  P^{a,c}_{b} (\vec{\kappa}) +\lambda  \sum_{\substack{[[a,b]] \leq [[A+2,B]]\\c\leq C+2\\b\in[N,M+2],odd}} P^{a,c}_{b} (\vec{\kappa}) \, .
\end{align*}
\end{proof}


\begin{proof}[Proof of \eqref{E3} in Lemma \ref{lemLin}]
Equation \eqref{ee} gives us that
\begin{equation}\label{1mai1}
\nabla_{t} \vec{\kappa} = - \nabla_s^{4} \vec{\kappa} + \sum_{\substack{[[a,b]] \leq [[2,3]]\\c\leq 2\\b\in [3,5], odd}} P^{a,c}_{b} (\vec{\kappa}) + \lambda \sum_{\substack{[[a,b]] \leq [[2,1]]\\c\leq 2\\b\in [1,3], odd}} P^{a,c}_{b} (\vec{\kappa})  \, ,
\end{equation}
that is the claim for $m=1$. Assuming that \eqref{E3} holds for some $m \geq 1$, we get using \eqref{E1}, \eqref{E2sum}, \eqref{1mai1}
\begin{align*}
& \nabla^{m+1}_{t} \vec{\kappa} \\
& \quad = (-1)^m  \nabla_t \nabla_s^{4m}
\vec{\kappa}\\
& \qquad +  \nabla_t \Big(  \sum_{\substack{[[a,b]] \leq [[4m -2,3]]\\c\leq 4 m -2\\b\in[3,4m+1],odd}} P^{a,c}_{b} (\vec{\kappa}) + \sum_{i=1}^{m} \lambda^{i} \sum_{\substack{[[a,b]] \leq [[4m -2i,1]]\\c\leq 4 m-2i \\b\in[1,4m+1-2i],odd}} P^{a,c}_{b} (\vec{\kappa}) \Big)\\
& \quad = (-1)^m \Big( \nabla_s^{4m} \nabla_t \vec{\kappa} + \sum_{\substack{[[a,b]] \leq [[4m+2,3]]\\c\leq 2+4m\\b \in [3,5], odd}} P^{a,c}_{b} (\vec{\kappa})  + \lambda
\sum_{\substack{[[a,b]] \leq [[4m,3]]\\ c \leq 4m\\b=3}} P^{a,c}_{b} (\vec{\kappa})  \Big) \\
& \qquad  + \sum_{\substack{[[a,b]] \leq [[4(m+1)-2,3]]\\c\leq 4(m+1)-2\\b \in [3,4(m+1)+1], odd}} P^{a,c}_{b} (\vec{\kappa})  + \lambda
\sum_{\substack{[[a,b]] \leq [[4m,3]]\\ c \leq 4m\\b\in [3, 4m+3]}} P^{a,c}_{b} (\vec{\kappa}) \\
& \qquad + \sum_{i=1}^{m} \lambda^{i} \hspace{-.2cm} \sum_{\substack{[[a,b]] \leq [[4(m+1)
   -2i,1]]\\c\leq 4 (m+1)-2i \\b\in[1,4(m+1)+1-2i],odd}} \hspace{-.2cm} P^{a,c}_{b}
(\vec{\kappa}) + \sum_{i=1}^{m} \lambda^{i+1}  \hspace{-.2cm} \sum_{\substack{[[a,b]] \leq [[4(m+1) -2(i+1),1]]\\c\leq 4 (m+1)-2(i+1) \\b\in[1,4(m+1)+1-2(i+1)],odd}}\hspace{-.2cm} P^{a,c}_{b} (\vec{\kappa})\\
& \quad = (-1)^m  \nabla_s^{4m} \Big( - \nabla_s^{4} \vec{\kappa} + \sum_{\substack{[[a,b]] \leq [[2,3]]\\c\leq 2\\b\in [3,5], odd}} P^{a,c}_{b} (\vec{\kappa}) + \lambda \sum_{\substack{[[a,b]] \leq [[2,1]]\\c\leq 2\\b\in [1,3], odd}} P^{a,c}_{b} (\vec{\kappa}) \Big)\\
& \qquad + \sum_{\substack{[[a,b]] \leq [[4(m+1)-2,3]]\\c\leq 4(m+1)-2\\b \in [3,4(m+1)+1], odd}} P^{a,c}_{b} (\vec{\kappa}) + \sum_{i=1}^{m+1} \lambda^{i} \sum_{\substack{[[a,b]] \leq [[4(m+1) -2i,1]]\\c\leq 4 (m+1)-2i \\b\in[1,4(m+1)+1-2i],odd}} P^{a,c}_{b} (\vec{\kappa})
\end{align*}
from which the claim follows directly.
\end{proof}


\begin{proof}[Proof of \eqref{E4} in Lemma \ref{lemLin}]
We prove the formula by induction on $m$. For simplicity of notation let $\vec{\xi}^{l}=\nabla_s^{l}\vec{\kappa}$. Formula \eqref{E4} with $m=1$ follows  for any $k \in \mathbb{N}$, $l \in \mathbb{N}_0$ from \eqref{E1}.
Notice that this formula is ``weaker'' than \eqref{E1}. Assuming that the claim holds for some $m \geq 1$ and for any $l \in \mathbb{N}_0$ and $k\in \mathbb{N}$, we find using \eqref{E3}, the induction assumption (for $m$ and $m=1$), \eqref{E2sum}, \eqref{E1}, \eqref{1mai1} 
{\allowdisplaybreaks
\begin{align*}
 & \nabla_t^{m+1} \nabla_s^{k} \vec{\xi}^{l}  = \nabla_t \left(\nabla_t^{m} \nabla_s^{k+l} \vec{\kappa} \right) =\\ 
& = \nabla_{t} \Big( \nabla_s^{k+l} \nabla_{t}^{m} \vec{\kappa} +
\hspace{-.2cm} \sum_{\substack{[[a,b]] \leq [[4m+k+l -2,3]]\\c\leq 4 m
    +l+k-2\\b\in[3,4m+1],odd}} \hspace{-.2cm} P^{a,c}_{b} (\vec{\kappa}) +
\sum_{i=1}^{m} \lambda^{i} \hspace{-.2cm} \sum_{\substack{[[a,b]] \leq
    [[4m+k+l-2i,1]]\\c\leq 4 m +l+k-2i\\b\in[1,4m-2i+1],odd}} \hspace{-.2cm} P^{a,c}_{b} (\vec{\kappa})  \Big) \\
& = \nabla_{t}  \nabla_s^{k+l} \Big((-1)^m \nabla_s^{4m} \vec{\kappa} +
\hspace{-.2cm} \sum_{\substack{[[a,b]] \leq [[4m -2,3]]\\c\leq 4 m
    -2\\b\in[3,4m+1],odd}} \hspace{-.2cm} P^{a,c}_{b} (\vec{\kappa}) + \sum_{i=1}^{m} \lambda^{i}\hspace{-.2cm} \sum_{\substack{[[a,b]] \leq [[4m -2i,1]]\\c\leq 4 m-2i \\b\in[1,4m+1-2i],odd}}\hspace{-.2cm} P^{a,c}_{b} (\vec{\kappa}) \Big)\\
& \quad +\nabla_{t} \Big(  \hspace{-.2cm} \sum_{\substack{[[a,b]] \leq [[4m+k+l -2,3]]\\c\leq 4 m +l+k-2\\b\in[3,4m+1],odd}}\hspace{-.2cm} P^{a,c}_{b} (\vec{\kappa}) + \sum_{i=1}^{m} \lambda^{i}\hspace{-.2cm} \sum_{\substack{[[a,b]] \leq [[4m+k+l-2i,1]]\\c\leq 4 m +l+k-2i\\b\in[1,4m-2i+1],odd}}\hspace{-.2cm} P^{a,c}_{b} (\vec{\kappa})  \Big) \\
& = (-1)^m \nabla_{t} \nabla_s^{4m+k+l} \vec{\kappa} \\
& \quad +\nabla_{t} \Big( \hspace{-.2cm}  \sum_{\substack{[[a,b]] \leq [[4m+k+l -2,3]]\\c\leq 4 m +l+k-2\\b\in[3,4m+1],odd}}\hspace{-.2cm} P^{a,c}_{b} (\vec{\kappa}) + \sum_{i=1}^{m} \lambda^{i}\hspace{-.2cm} \sum_{\substack{[[a,b]] \leq [[4m+k+l-2i,1]]\\c\leq 4 m +l+k-2i\\b\in[1,4m-2i+1],odd}}\hspace{-.2cm} P^{a,c}_{b} (\vec{\kappa})  \Big) \\
& = (-1)^m  \nabla_s^{4m+k+l}\nabla_{t} \vec{\kappa} +\hspace{-.2cm}  \sum_{\substack{[[a,b]] \leq [[4(m+1)+k+l-2,3]]\\c\leq 4 (m+1) +l+k-2\\b\in[3,5],odd}}\hspace{-.2cm} P^{a,c}_{b} (\vec{\kappa}) + \lambda\hspace{-.2cm} \sum_{\substack{[[a,b]] \leq [[4m+k+l,3]]\\c\leq 4 m+l+k\\b=3,odd}}\hspace{-.2cm} P^{a,c}_{b} (\vec{\kappa})  \\
& \quad + \hspace{-.2cm} \sum_{\substack{[[a,b]] \leq [[4(m+1)+k+l
    -2,3]]\\c\leq 4 (m+1) +l+k-2\\b\in[3,4(m+1)+1],odd}} \hspace{-.2cm}
P^{a,c}_{b} (\vec{\kappa}) + \lambda \hspace{-.2cm} \sum_{\substack{[[a,b]] \leq [[4(m+1)+k+l -4,3]]\\c\leq 4 (m+1) +l+k-4\\b\in[3,4(m+1)+1-2],odd}}\hspace{-.2cm} P^{a,c}_{b} (\vec{\kappa})  \\
& \quad +  \sum_{i=1}^{m} \lambda^{i}\hspace{-.2cm} \sum_{\substack{[[a,b]] \leq [[4(m+1)+k+l-2i,1]]\\c\leq 4 (m+1) +l+k-2i\\b\in[1,4(m+1)-2i+1],odd}}\hspace{-.2cm} P^{a,c}_{b} (\vec{\kappa})  +  \sum_{i=1}^{m} \lambda^{i+1}\hspace{-.2cm} \sum_{\substack{[[a,b]] \leq [[4(m+1)+k+l-2(i+1),1]]\\c\leq 4 (m+1) +l+k-2(i+1)\\b\in[1,4(m+1)-2(i+1)+1],odd}}\hspace{-.2cm} P^{a,c}_{b} (\vec{\kappa}) \\
 & = (-1)^m  \nabla_s^{4m+k+l}( - \nabla_s^{4} \vec{\kappa} + \sum_{\substack{[[a,b]] \leq [[2,3]]\\c\leq 2\\b\in [3,5], odd}} P^{a,c}_{b} (\vec{\kappa}) + \lambda \sum_{\substack{[[a,b]] \leq [[2,1]]\\c\leq 2\\b\in [1,3], odd}} P^{a,c}_{b} (\vec{\kappa}))\\ 
& \quad +  \sum_{\substack{[[a,b]] \leq [[4(m+1)+k+l -2,3]]\\c\leq 4 (m+1) +l+k-2\\b\in[3,4(m+1)+1],odd}} P^{a,c}_{b} (\vec{\kappa})  +  \sum_{i=1}^{m+1} \lambda^{i} \sum_{\substack{[[a,b]] \leq [[4(m+1)+k+l-2i,1]]\\c\leq 4 (m+1) +l+k-2i\\b\in[1,4(m+1)-2i+1],odd}} P^{a,c}_{b} (\vec{\kappa})
\end{align*}
}
that yields
\begin{align}\nonumber
 \nabla_t^{m+1}  \nabla_s^{k}  \vec{\xi}^{l}  &  = (-1)^{m+1}  \nabla_s^{4(m+1)+k+l} \vec{\kappa} +\hspace{-.2cm}  \sum_{\substack{[[a,b]] \leq [[4(m+1)+k+l -2,3]]\\c\leq 4 (m+1) +l+k-2\\b\in[3,4(m+1)+1],odd}}\hspace{-.2cm} P^{a,c}_{b} (\vec{\kappa}) \\ \label{1mai2}
& \quad   +  \sum_{i=1}^{m+1} \lambda^{i}\hspace{-.2cm} \sum_{\substack{[[a,b]] \leq
    [[4(m+1)+k+l-2i,1]]\\c\leq 4 (m+1) +l+k-2i\\b\in[1,4(m+1)-2i+1],odd}}\hspace{-.2cm}
P^{a,c}_{b} (\vec{\kappa}) \, .
\end{align}
On the other hand, using \eqref{1mai2}
\begin{align*}
 \nabla_s^{k} \nabla_t^{m+1} \vec{\xi}^{l} & =  \nabla_s^{k} \nabla_t^{m+1} \nabla_s^{l} \vec{\kappa} \\
& = \nabla_s^{k} \Big(  (-1)^{m+1}  \nabla_s^{4(m+1)+l} \vec{\kappa} +  \sum_{\substack{[[a,b]] \leq [[4(m+1)+l -2,3]]\\c\leq 4 (m+1) +l-2\\b\in[3,4(m+1)+1],odd}} P^{a,c}_{b} (\vec{\kappa})  \\
& \qquad +  \sum_{i=1}^{m+1} \lambda^{i} \sum_{\substack{[[a,b]] \leq [[4(m+1)+l-2i,1]]\\c\leq 4 (m+1) +l-2i\\b\in[1,4(m+1)-2i+1],odd}} P^{a,c}_{b} (\vec{\kappa})   \Big) \\
& = (-1)^{m+1}  \nabla_s^{4(m+1)+k+l} \vec{\kappa} +  \sum_{\substack{[[a,b]] \leq [[4(m+1)+k+l -2,3]]\\c\leq 4 (m+1)+k +l-2\\b\in[3,4(m+1)+1],odd}} P^{a,c}_{b} (\vec{\kappa})  \\
& \qquad  +  \sum_{i=1}^{m+1} \lambda^{i} \sum_{\substack{[[a,b]] \leq [[4(m+1)+l+k-2i,1]]\\c\leq 4 (m+1)+k +l-2i\\b\in[1,4(m+1)-2i+1],odd}} P^{a,c}_{b} (\vec{\kappa}) \, .
\end{align*}
The claim follows combining the formula just obtained with \eqref{1mai2}.
\end{proof}


\begin{proof}[Proof of \eqref{E5} in Lemma \ref{lemLin}]
Formula \eqref{E5} with $m=1$ is the equation that $f$ satisfies. Assuming that \eqref{E5} holds for some $m \geq 1$ we find with \eqref{E4}, \eqref{E2sum} and \eqref{1mai1}
{\allowdisplaybreaks 
\begin{align*}
& \nabla_t^{m+1} f \\
& \quad = \nabla_t \Big(  (-1)^{m} \nabla_s^{4m-2} \vec{\kappa} +
\hspace{-.2cm} \sum_{\substack{[[a,b]] \leq [[4m -4,3]]\\c\leq 4 m -4\\b\in[3, 4m-1],odd}}\hspace{-.3cm} P^{a,c}_{b} (\vec{\kappa}) + \sum_{i=1}^{m} \lambda^{i}\hspace{-.2cm} \sum_{\substack{[[a,b]] \leq [[4m -2-2i,1]]\\c\leq 4 m -2-2i\\b\in[1, 4m-1-2i],odd}}\hspace{-.3cm} P^{a,c}_{b} (\vec{\kappa})  \Big) \\
& \quad = (-1)^m \nabla_s^{4m-2} \nabla_t \vec{\kappa} +\hspace{-.2cm} \sum_{\substack{[[a,b]] \leq [[4m,3]]\\c\leq 4 m\\b\in[3, 5],odd}}\hspace{-.2cm} P^{a,c}_{b} (\vec{\kappa}) + \lambda\hspace{-.2cm} \sum_{\substack{[[a,b]] \leq [[4m,1]]\\c\leq 4 m \\b\in[1, 3],odd}}\hspace{-.2cm} P^{a,c}_{b} (\vec{\kappa}) \\
& \qquad +\hspace{-.2cm} \sum_{\substack{[[a,b]] \leq [[4m,3]]\\c\leq 4 m\\b\in[3, 4(m+1)-1],odd}}\hspace{-.2cm} P^{a,c}_{b} (\vec{\kappa}) + \lambda\hspace{-.2cm} \sum_{\substack{[[a,b]] \leq [[4m-2,3]]\\c\leq 4 m-2 \\b\in[3,4m+1],odd}}\hspace{-.2cm} P^{a,c}_{b} (\vec{\kappa})\\
& \qquad + \sum_{i=1}^{m} \lambda^{i}\hspace{-.2cm} \sum_{\substack{[[a,b]] \leq [[4(m+1) -2-2i,1]]\\c\leq 4 (m+1) -2-2i\\b\in[1, 4(m+1)-1-2i],odd}}\hspace{-.3cm} P^{a,c}_{b} (\vec{\kappa})+ \sum_{i=1}^{m} \lambda^{i+1}\hspace{-.2cm} \sum_{\substack{[[a,b]] \leq [[4(m+1)-2-2(i+1),1]]\\c\leq 4 (m+1) -2-2(i+1)\\b\in[1, 4(m+1)-1-2(i+1)],odd}}\hspace{-.3cm} P^{a,c}_{b} (\vec{\kappa}) \\
& \quad = (-1)^m \nabla_s^{4m-2} \Big(-\nabla_s^4 \vec{\kappa} +\hspace{-.2cm} \sum_{\substack{[[a,b]]\leq[[2,3]]\\c\leq 2\\b\in[3,5], odd}}\hspace{-.2cm} P_{b}^{a,c}(\vec{\kappa})  + \lambda\hspace{-.2cm} \sum_{\substack{[[a,b]]\leq[[2,1]]\\c\leq 2\\b\in[1,3],odd}}\hspace{-.2cm} P_{b}^{a,c}(\vec{\kappa}) \Big) \\
& \qquad  +\hspace{-.2cm} \sum_{\substack{[[a,b]] \leq [[4m,3]]\\c\leq 4 m\\b\in[3, 4(m+1)-1],odd}}\hspace{-.2cm} P^{a,c}_{b} (\vec{\kappa}) + \sum_{i=1}^{m+1} \lambda^{i}\hspace{-.2cm} \sum_{\substack{[[a,b]] \leq [[4(m+1) -2-2i,1]]\\c\leq 4 (m+1) -2-2i\\b\in[1, 4(m+1)-1-2i],odd}}\hspace{-.2cm} P^{a,c}_{b} (\vec{\kappa})\\
& \quad = (-1)^{m+1} \nabla_s^{4(m+1)-2} \vec{\kappa}  +\hspace{-.2cm}
\sum_{\substack{[[a,b]] \leq [[4m,3]]\\c\leq 4 m\\b\in[3,
    4(m+1)-1],odd}}\hspace{-.2cm} P^{a,c}_{b} (\vec{\kappa}) \\
& \qquad + \sum_{i=1}^{m+1} \lambda^{i}\hspace{-.2cm} \sum_{\substack{[[a,b]] \leq [[4(m+1) -2-2i,1]]\\c\leq 4 (m+1) -2-2i\\b\in[1, 4(m+1)-1-2i],odd}}\hspace{-.2cm} P^{a,c}_{b} (\vec{\kappa}) \, .
\end{align*}
}
\end{proof}


\section{Proof of Lemma \ref{leminter}}
\label{AppendixInter}
In the following we give some useful facts in order to prove Lemma \ref{leminter}. We use the notation presented previously and denote by $c$  a positive constant that may change from line to line.

Although the next result is well known, we report the exact statement, since it is used in several important steps and since 
it shows explicitly on what the constant depends.
\begin{lemma}\label{lemBGH}
Let $J \subset \R$ be a bounded open interval and $g:J \rightarrow
\mathbb{R}^n$, $g(x)$, be a sufficiently smooth function. Then 
\begin{equation*}
\| g\|_{C^{0}(\bar{J})} \leq c(n) \| \partial_x g\|_{L^1(J)} + \frac{c(n)}{|J|} \| g\|_{L^1(J)} \, .
\end{equation*}
If $n=1$, then $c(n)=1$.
\end{lemma}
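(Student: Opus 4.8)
The plan is to reduce everything to the scalar case $n=1$ and there use the fundamental theorem of calculus together with an averaging trick to produce the $1/|J|$ factor.

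First I would treat $n=1$. Write $J=(a,b)$, so $|J|=b-a$. For arbitrary $x,y \in \bar J$ the fundamental theorem of calculus gives $g(x)=g(y)+\int_y^x \partial_t g\,dt$, whence
\[
|g(x)| \le |g(y)| + \int_J |\partial_t g|\,dt = |g(y)| + \|\partial_x g\|_{L^1(J)}.
\]
Since this holds for every $y$, I would integrate the inequality in $y$ over $J$ and divide by $|J|$, obtaining
\[
|g(x)| \le \frac{1}{|J|}\int_J |g(y)|\,dy + \|\partial_x g\|_{L^1(J)} = \frac{1}{|J|}\|g\|_{L^1(J)} + \|\partial_x g\|_{L^1(J)}.
\]
Taking the supremum over $x\in\bar J$ yields the claim with $c(1)=1$.

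For general $n$ I would apply the scalar estimate to each component $g^i$, $i=1,\dots,n$, of $g$. Since $|g^i| \le |g|$ pointwise, one has $\|\partial_x g^i\|_{L^1(J)} \le \|\partial_x g\|_{L^1(J)}$ and $\|g^i\|_{L^1(J)}\le\|g\|_{L^1(J)}$, so each component satisfies $\|g^i\|_{C^0(\bar J)} \le \|\partial_x g\|_{L^1(J)} + \frac{1}{|J|}\|g\|_{L^1(J)}$. To pass from the components to $|g|=(\sum_i (g^i)^2)^{1/2}$ I would use that at any fixed point $|g(x)|^2 = \sum_i (g^i(x))^2 \le \sum_i \|g^i\|_{C^0(\bar J)}^2$, giving $\|g\|_{C^0(\bar J)} \le \sqrt{n}\,\big(\|\partial_x g\|_{L^1(J)} + \frac{1}{|J|}\|g\|_{L^1(J)}\big)$; thus $c(n)=\sqrt n$ is admissible (a cruder triangle-inequality bound $|g|\le\sum_i|g^i|$ would instead give $c(n)=n$, which is equally acceptable for the later applications).

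The argument is entirely elementary, so there is no real obstacle; the only points requiring a little care are the averaging step in $y$, which is exactly what produces the length-dependent factor $1/|J|$ (rather than a bound involving $\|g\|_{L^1}$ without the reciprocal length), and keeping explicit track of the dimensional constant when recombining the components, since the paper insists on recording the precise dependence of all constants.
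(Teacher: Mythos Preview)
Your argument is correct and follows essentially the same approach as the paper: reduce to the scalar case componentwise. The only difference is that the paper simply cites \cite[Thm.~2.2]{BGH} for the scalar inequality, whereas you supply the elementary fundamental-theorem-of-calculus plus averaging argument explicitly; your version is therefore more self-contained.
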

\begin{proof}
Writing $g = (g^1, \dots, g^{n})$, $g^{i}:J \rightarrow \mathbb{R}$ 
for $i \in\{1,\dots, n\} $, 
the claim follows from \cite[Thm.~2.2]{BGH}.
\end{proof}

\begin{lemma}
\label{lemma6.1}
Let $J \subset \R$ be a bounded open interval and $g: J \to \R$, $g=g(x)$, be as regular as required. We have that for any $\epsilon \in (0,1)$
\begin{align}
\label{6.1}
\| g \| _{C(\bar{J})} &\leq \epsilon \| g_{x} \|_{L^{2}(J)}+\frac{c}{\epsilon} \| g \|_{L^{2}(J)} \, ,\\
\label{6.2}
\|g_{x} \|_{L^{2}(J)} & \leq \epsilon \| g \|_{W^{2,2}(J)} + \frac{1}{ \epsilon} \| g \|_{L^{2}(J)}  \, ,\\
\label{6.3}
\| g \| _{C(\bar{J})} &\leq \epsilon \| g \|_{W^{2,2}(J)} + \frac{c}{ \epsilon} \| g\|_{L^{2}(J)} \, ,
\end{align} 
with $c=c(J)$.
\end{lemma}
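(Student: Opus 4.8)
The plan is to obtain \eqref{6.1} from the elementary embedding Lemma \ref{lemBGH}, to prove \eqref{6.2} as a genuine interpolation inequality for the intermediate derivative, and finally to deduce \eqref{6.3} by composing the two previous estimates. Throughout, the only analytic tools needed are the Cauchy--Schwarz and Young inequalities together with one integration by parts; the single delicate point will be the boundary term arising in \eqref{6.2}.

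For \eqref{6.1} I would apply Lemma \ref{lemBGH} (with $n=1$, so that $c(n)=1$) not to $g$ but to $g^2$. Since $\partial_x(g^2)=2g g_x$ and $\|g^2\|_{C^0(\bar J)}=\|g\|_{C^0(\bar J)}^2$, this yields
\[
\|g\|_{C^0(\bar J)}^2 \le 2\int_J |g|\,|g_x|\,dx + \frac{1}{|J|}\int_J g^2\,dx \le 2\|g\|_{L^2(J)}\|g_x\|_{L^2(J)} + \frac{1}{|J|}\|g\|_{L^2(J)}^2,
\]
using Cauchy--Schwarz in the second step. Taking square roots and using $\sqrt{a+b}\le\sqrt a+\sqrt b$ gives the scale-invariant inequality $\|g\|_{C^0}\le \sqrt2\,\|g\|_{L^2}^{1/2}\|g_x\|_{L^2}^{1/2}+|J|^{-1/2}\|g\|_{L^2}$; a single application of Young's inequality $\sqrt2\,a^{1/2}b^{1/2}\le \epsilon a+\tfrac1{2\epsilon}b$ to the first summand, absorbing the harmless $|J|^{-1/2}$ term into $c(J)/\epsilon$ since $\epsilon\in(0,1)$, produces exactly \eqref{6.1}.

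For \eqref{6.2} I would start from the identity $\int_J g_x^2\,dx=[g g_x]_{\partial J}-\int_J g\,g_{xx}\,dx$ obtained by integration by parts. The interior term is immediately controlled by $\|g\|_{L^2}\|g_{xx}\|_{L^2}\le \|g\|_{L^2}\|g\|_{W^{2,2}}$ via Cauchy--Schwarz. \emph{The main obstacle is the boundary term} $[g g_x]_{\partial J}$, which cannot simply be discarded since no boundary conditions are available. My plan is to bound it by $2\|g\|_{C^0(\bar J)}\|g_x\|_{C^0(\bar J)}$ and then apply the scale-invariant form of \eqref{6.1} \emph{twice}: once to $g$ and once to $g_x$ (the latter giving $\|g_x\|_{C^0}\le \sqrt2\,\|g_x\|_{L^2}^{1/2}\|g_{xx}\|_{L^2}^{1/2}+|J|^{-1/2}\|g_x\|_{L^2}$). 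Expanding the product creates several mixed terms, each a power-weighted combination of $\|g\|_{L^2}$, $\|g_x\|_{L^2}$, $\|g_{xx}\|_{L^2}$; applying Young's inequality to each with a small parameter lets me transfer the resulting multiples of $\|g_x\|_{L^2}^2$ to the left-hand side by absorption, while the remainder is bounded by $\delta\|g_{xx}\|_{L^2}+C(J)\delta^{-1}\|g\|_{L^2}$. Choosing the interpolation parameter in terms of $\epsilon$ and using $\|g_{xx}\|_{L^2}\le\|g\|_{W^{2,2}}$ then yields \eqref{6.2}.

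Finally, \eqref{6.3} follows by chaining the two inequalities: apply \eqref{6.1} with parameter $\sqrt\epsilon$ and then estimate the term $\sqrt\epsilon\,\|g_x\|_{L^2}$ by \eqref{6.2} with parameter $\sqrt\epsilon$, giving $\|g\|_{C^0}\le \epsilon\|g\|_{W^{2,2}}+\bigl(1+c(J)\epsilon^{-1/2}\bigr)\|g\|_{L^2}$; since $\epsilon\in(0,1)$ the coefficient of $\|g\|_{L^2}$ is bounded by $c(J)/\epsilon$, which is the assertion. I expect the only real work to be the bookkeeping in the boundary-term estimate of \eqref{6.2}; everything else is a direct application of Lemma \ref{lemBGH} and Young's inequality.
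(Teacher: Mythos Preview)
Your argument for \eqref{6.1} and \eqref{6.3} is exactly what the paper does: apply Lemma~\ref{lemBGH} to $g^2$, use Cauchy--Schwarz and Young for \eqref{6.1}, then chain \eqref{6.1} and \eqref{6.2} for \eqref{6.3}.

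The only difference is in \eqref{6.2}. The paper does not prove it at all but simply cites \cite[Thm.~5.2]{Adams}. Your self-contained proof via integration by parts, controlling the boundary term $[g g_x]_{\partial J}$ through two applications of the scale-invariant form of \eqref{6.1} (to $g$ and to $g_x$) and then absorbing the resulting $\|g_x\|_{L^2}^2$ contributions by Young, is correct and is in fact precisely the technique the paper employs in the proof of the \emph{next} lemma (Lemma~\ref{indstep1}), where the same boundary-term issue arises for normal vector fields. So your approach buys a self-contained argument at the cost of some bookkeeping, while the paper's citation keeps the lemma short; both are legitimate, and your route is fully in the spirit of the paper's methods.
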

\begin{proof} Using Lemma \ref{lemBGH} we get for $x \in \bar{J}$
\begin{align*}
 \| g^{2} \| _{C(\bar{J})} &  \leq \| \partial_{x} (g^{2})
\|_{L^{1}(J)} + \frac{1}{|J|}  \| g^{2} \|_{L^{1}(J)} 
& \leq \frac{1}{|J|}  \int_{J}|g|^{2} dx + 2 \int_{J}|g||g_{x}| dx
\end{align*}
and equation \eqref{6.1} follows using Young's inequality and $\epsilon <1$.
The second inequality is shown in \cite[Thm. 5.2]{Adams}, the third one follows from \eqref{6.1} and~\eqref{6.2}.
\end{proof}

Recall that as usual $f: [0,1] \rightarrow \mathbb{R}^n$ is a smooth regular curve, $I=(0,1)$. 
\begin{lemma}
\label{lemmafacile}
For  normal vector field $\vec{\phi}$ we have that
$$|\partial_{s} |\vec{\phi}||\leq|\nabla_{s }\vec{\phi}| \mbox{ almost everywhere}.$$
\end{lemma}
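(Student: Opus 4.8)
The plan is to split the curve parameter domain into the open set where $|\vec{\phi}|>0$, on which $|\vec{\phi}|$ is smooth and the inequality can be verified by a direct computation, and the complementary set $\{|\vec{\phi}|=0\}$, where the claim will follow from a standard fact about the vanishing of weak derivatives on level sets. First I would work on the open set $U:=\{x: \vec{\phi}(x)\neq 0\}$. There $|\vec{\phi}|=\sqrt{\langle \vec{\phi}, \vec{\phi}\rangle}$ is a smooth positive function and one computes directly
\begin{equation*}
\partial_{s}|\vec{\phi}| = \frac{\langle \partial_{s}\vec{\phi}, \vec{\phi}\rangle}{|\vec{\phi}|} \, .
\end{equation*}

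The key point is then to replace $\partial_{s}\vec{\phi}$ by its normal part. Since $\vec{\phi}$ is a normal vector field we have $\langle \vec{\phi}, \tau\rangle = 0$, and therefore
\begin{equation*}
\langle \partial_{s}\vec{\phi}, \vec{\phi}\rangle = \langle \partial_{s}\vec{\phi} - \langle \partial_{s}\vec{\phi}, \tau\rangle \tau, \vec{\phi}\rangle = \langle \nabla_{s}\vec{\phi}, \vec{\phi}\rangle \, ,
\end{equation*}
using the very definition $\nabla_{s}\vec{\phi}=\partial_{s}\vec{\phi}-\langle \partial_{s}\vec{\phi}, \tau\rangle \tau$. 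Combining this with the previous identity and the Cauchy--Schwarz inequality gives, on $U$,
\begin{equation*}
|\partial_{s}|\vec{\phi}|| = \frac{|\langle \nabla_{s}\vec{\phi}, \vec{\phi}\rangle|}{|\vec{\phi}|} \leq \frac{|\nabla_{s}\vec{\phi}|\,|\vec{\phi}|}{|\vec{\phi}|} = |\nabla_{s}\vec{\phi}| \, ,
\end{equation*}
which is the asserted inequality at every point of $U$.

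It remains to treat the set $Z:=\{x:\vec{\phi}(x)=0\}$, and this is where the \emph{almost everywhere} qualifier and the only genuine subtlety enter: on $Z$ the function $|\vec{\phi}|$ need not be differentiable in the classical sense. The plan here is to note that $|\vec{\phi}|$ is Lipschitz (being the composition of the Euclidean norm with the smooth map $\vec{\phi}$), hence belongs to $W^{1,1}_{\mathrm{loc}}$, and to invoke the standard measure-theoretic fact that the weak derivative of a $W^{1,1}$ function vanishes almost everywhere on any of its level sets; in particular $\partial_{s}|\vec{\phi}|=0$ for almost every $x\in Z=\{|\vec{\phi}|=0\}$. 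At such points the inequality $|\partial_{s}|\vec{\phi}||=0\leq |\nabla_{s}\vec{\phi}|$ holds trivially. Since $U$ and $Z$ exhaust the domain and the estimate holds on $U$ pointwise and on $Z$ up to a null set, the inequality $|\partial_{s}|\vec{\phi}||\leq |\nabla_{s}\vec{\phi}|$ holds almost everywhere, completing the argument.
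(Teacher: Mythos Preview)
Your proof is correct and follows the same line as the paper on the set $\{\vec{\phi}\neq 0\}$: compute $\partial_s|\vec{\phi}|=\langle \vec{\phi}/|\vec{\phi}|,\partial_s\vec{\phi}\rangle$, replace $\partial_s\vec{\phi}$ by $\nabla_s\vec{\phi}$ using normality, and apply Cauchy--Schwarz. The only difference is on the zero set: the paper handles it by the regularisation $\sqrt{\delta^2+|\vec{\phi}|^2}$ and passing to the limit $\delta\searrow 0$ in the definition of weak derivative, whereas you invoke the standard fact that the weak derivative of a $W^{1,1}$ function vanishes a.e.\ on a level set. Both are equally valid and essentially equivalent devices; your route is perhaps slightly cleaner since it appeals to a ready-made lemma rather than carrying out the limit by hand.
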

\begin{proof}
For $\vec{\phi} \neq 0$ the claim easily follows from
$\partial_{s} |\vec{\phi}|= \langle \frac{\vec{\phi}}{|\vec{\phi}|}, \partial_{s } \vec{\phi} \rangle = \langle \frac{\vec{\phi}}{|\vec{\phi}|}, \nabla_{s } \vec{\phi} \rangle  $.
Otherwise consider for a positive $\delta$ the regularisation $\sqrt{\delta^{2}+ \langle \vec{\phi}, \vec{\phi} \rangle}$ and take the limit $\delta \searrow 0$ in the definition of weak derivative. 
\end{proof}
For a normal vector field $\vec{\phi}:\bar{I} \to \R^{n}$ recall 
$\| \vec{\phi}\|_{k,p} = \sum_{i=0}^{k} \| \nabla_s^{i} \vec{\phi}\|_{p}$ with
\begin{equation*} 
\| \nabla_s^{i} \vec{\phi} \|_{p} 
= \mathcal{L}[f]^{i+1-1/p} \| \nabla_s^{i} \vec{\phi}\|_{L^{p}}\, ,
\end{equation*}
and keep in mind that these norms are scale invariant when $\vec{\phi}=\vec{\kappa}$ (otherwise the transformation  $f \mapsto \alpha f$ for $\alpha>0$ multiplies the norm by a factor $\alpha$).

\begin{lemma}
\label{indstep1}
Let $\vec{\phi}$ be a normal vector field. Then for any $\epsilon \in (0,1)$
$$\| \nabla_{s} \vec{\phi}\|_{2} \leq c\left(  \epsilon \| \vec{\phi}\|_{2,2}+ \frac{1}{\epsilon} \| \vec{\phi} \|_{2}\right). $$
\end{lemma}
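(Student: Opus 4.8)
The plan is to reduce the geometric interpolation to the one-dimensional scalar statement \eqref{6.2} of Lemma \ref{lemma6.1}, exploiting the scale invariance recorded just before the lemma. First I would note that under the dilation $f \mapsto \alpha f$ every quantity $\|\nabla_{s}^{i}\vec{\phi}\|_{2}$ picks up the same overall factor $\alpha$, so both sides of the claimed inequality scale identically and we may assume $\mathcal{L}[f]=1$. After reparametrizing $f$ by arc length (so that $\partial_{s}=\partial_{x}$ and $I$ becomes the unit interval), the weighted norms reduce to plain $L^{2}$-norms: $\|\nabla_{s}^{i}\vec{\phi}\|_{2}=\|\nabla_{s}^{i}\vec{\phi}\|_{L^{2}}$ and $\|\vec{\phi}\|_{2,2}=\sum_{i=0}^{2}\|\nabla_{s}^{i}\vec{\phi}\|_{L^{2}}$. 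Thus it suffices to prove $\|\nabla_{s}\vec{\phi}\|_{L^{2}}\le c\big(\epsilon\sum_{i=0}^{2}\|\nabla_{s}^{i}\vec{\phi}\|_{L^{2}}+\epsilon^{-1}\|\vec{\phi}\|_{L^{2}}\big)$ on $(0,1)$.

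The key device is to trivialize the normal connection by choosing a parallel orthonormal frame $\{e_{1},\dots,e_{n-1}\}$ of the normal bundle along $f$, i.e. solving $\nabla_{s}e_{\alpha}=0$ with orthonormal initial data at $s=0$; since $\nabla_{s}$ is a metric connection, parallel transport preserves orthonormality, and on an interval there is no obstruction to a global such frame. Writing $\vec{\phi}=\sum_{\alpha}\phi^{\alpha}e_{\alpha}$ with scalar coefficients $\phi^{\alpha}$, the relation $\nabla_{s}e_{\alpha}=0$ gives $\nabla_{s}^{i}\vec{\phi}=\sum_{\alpha}(\partial_{s}^{i}\phi^{\alpha})e_{\alpha}$, so that $|\nabla_{s}^{i}\vec{\phi}|^{2}=\sum_{\alpha}(\partial_{s}^{i}\phi^{\alpha})^{2}$ pointwise, for $i=0,1,2$. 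Hence $\nabla_{s}$ becomes the ordinary derivative acting on each scalar component, and no curvature terms appear.

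To each $\phi^{\alpha}$ I would apply the scalar interpolation \eqref{6.2} on $J=(0,1)$, giving $\|\partial_{s}\phi^{\alpha}\|_{L^{2}}\le\epsilon\|\phi^{\alpha}\|_{W^{2,2}}+\epsilon^{-1}\|\phi^{\alpha}\|_{L^{2}}$, and then sum over $\alpha$. Using $\|\nabla_{s}\vec{\phi}\|_{L^{2}}\le\sum_{\alpha}\|\partial_{s}\phi^{\alpha}\|_{L^{2}}$, the identities above in the form $\sum_{\alpha}\|\partial_{s}^{i}\phi^{\alpha}\|_{L^{2}}\le\sqrt{n-1}\,\|\nabla_{s}^{i}\vec{\phi}\|_{L^{2}}$ (Cauchy-Schwarz over the $n-1$ components), and the definition of the $W^{2,2}$-norm, the right-hand side reassembles into $c(n)\big(\epsilon\|\vec{\phi}\|_{2,2}+\epsilon^{-1}\|\vec{\phi}\|_{2}\big)$ with a constant depending only on $n$; undoing the normalization $\mathcal{L}[f]=1$ by scale invariance concludes. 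The only point requiring care, and the main obstacle, is precisely the passage from $\nabla_{s}$ to ordinary derivatives without spurious curvature terms and with a universal constant: the parallel frame is what removes these terms, whereas a naive use of $\partial_{s}\vec{\phi}=\nabla_{s}\vec{\phi}-\langle\vec{\phi},\vec{\kappa}\rangle\tau$ would produce $\vec{\kappa}$-dependent quantities that cannot be absorbed into the stated right-hand side. Alternatively one may avoid the frame and integrate $\int_{I}|\nabla_{s}\vec{\phi}|^{2}\,ds=[\langle\vec{\phi},\nabla_{s}\vec{\phi}\rangle]_{0}^{1}-\int_{I}\langle\vec{\phi},\nabla_{s}^{2}\vec{\phi}\rangle\,ds$ by parts, controlling the boundary term through \eqref{6.1} together with Lemma \ref{lemmafacile}, but the frame argument is cleaner and keeps the constant manifestly universal.
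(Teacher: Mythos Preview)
Your argument is correct and takes a genuinely different route from the paper. The paper proceeds exactly via the alternative you sketch at the end: after normalizing to $\mathcal{L}[f]=1$ and reparametrizing by arc length, it integrates by parts to write $\|\nabla_{s}\vec{\phi}\|_{2}^{2}=-\int\langle\nabla_{s}^{2}\vec{\phi},\vec{\phi}\rangle\,ds+[\langle\nabla_{s}\vec{\phi},\vec{\phi}\rangle]_{0}^{1}$, bounds the interior term by Young's inequality, and controls the boundary term through \eqref{6.1} applied to $|\nabla_{s}\vec{\phi}|$ and $|\vec{\phi}|$ together with Lemma~\ref{lemmafacile}; a careful choice of the two auxiliary parameters $\epsilon_{1},\epsilon_{2}$ then absorbs the resulting $\|\nabla_{s}\vec{\phi}\|_{2}^{2}$ term.

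Your parallel-frame reduction is cleaner: by solving $\nabla_{s}e_{\alpha}=0$ you turn $\nabla_{s}$ into the ordinary derivative on scalar components and the geometric inequality collapses to $(n-1)$ copies of the scalar estimate \eqref{6.2}, with no boundary term to handle and a transparent constant $c=c(n)$. The paper's approach, by contrast, avoids introducing the frame and works directly with the vector field, at the cost of the extra bookkeeping for the boundary contribution. Both yield a constant independent of $f$ once $\mathcal{L}[f]=1$ is fixed; your method makes this more immediately visible.
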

\begin{proof}
Because of the scaling properties of the norm we may assume that $\mathcal{L}[f]=1$ so that $\| \cdot \|_{p}= \| \cdot \|_{L^{p}}$.
Moreover we consider the curve reparametrized according to arc-length (so that $|f_{x}|=1$, $dx =ds$, and we can use for instance Lemma \ref{lemma6.1}. For simplicity we take the boundary points of the domain (of length one since $\mathcal{L}[f]=1$) to be the points 0 and 1).
Now consider
\begin{align*}
\|\nabla_{s} \vec{\phi}\|_{2}^{2} & =\int_{0}^1\langle \nabla_{s}
\vec{\phi}(s), \nabla_{s} \vec{\phi} (s)\rangle ds 
& =- \int_{0}^1\langle \nabla^{2}_{s} \vec{\phi},  \vec{\phi} \rangle ds +[\langle \nabla_{s} \vec{\phi},  \vec{\phi} \rangle]_{0}^{1}=: I +II.
\end{align*}
Obviously 
$$|I| \leq \frac{\epsilon^2}{2} \| \vec{\phi}\|_{2,2}^{2}+ \frac{1}{2\epsilon^2} \| \vec{\phi} \|_{2}^{2} \, .$$
Moreover, by \eqref{6.1} and Lemma \ref{lemmafacile}
\begin{align*}
|II| &\leq 2\|\, |\nabla_{s} \vec{\phi}|\, \|_{L^{\infty}} \| \,|\vec{\phi} |\, \|_{L^{\infty}} \\
& \leq \big(\epsilon_1  \|\partial_s |\nabla_{s} \vec{\phi}| \|_{2} + \frac{c}{\epsilon_1} \| |\nabla_{s} \vec{\phi}| \|_{2} \big) \big(\epsilon_2  \|\partial_s | \vec{\phi}| \|_{2} + \frac{c}{\epsilon_2} \| |\vec{\phi}| \|_{2} \big)\\
& \leq \epsilon_1 \epsilon_2 \|\vec{\phi} \|_{2,2} \|\nabla_s  \vec{\phi} \|_{2} + c \frac{\epsilon_2}{\epsilon_1}  \|\nabla_s  \vec{\phi} \|_{2}^2 + c \frac{\epsilon_1}{\epsilon_2} \|\vec{\phi} \|_{2,2} \|\vec{\phi}\|_2 + \frac{c}{\epsilon_1\epsilon_2} \|\nabla_s  \vec{\phi} \|_{2} \| \vec{\phi} \|_{2} \, .
\end{align*}
Choosing $\epsilon_2=\epsilon_1/4c$ and by Young's inequality 
\begin{align*}
|II| & \leq c \epsilon_1^4 \|\vec{\phi} \|^2_{2,2} + \frac12 \|\nabla_s  \vec{\phi} \|_{2}^2   + \frac{c}{\epsilon_1^4} \| \vec{\phi} \|_{2}^2 \, .
\end{align*}

Putting the estimates together, with $\epsilon_1^2=\epsilon$ we find
\begin{align*}
\frac12 \|\nabla_{s} \vec{\phi}\|_{2}^{2} \leq \frac{\epsilon^2}{2} \| \vec{\phi}\|_{2,2}^{2}+ \frac{1}{2\epsilon^2} \| \vec{\phi} \|_{2}^{2} + c \epsilon^2 \| \vec{\phi} \|_{2,2}^2 + \frac{c}{\epsilon^2}  \| \vec{\phi} \|_{2}^2 ,
\end{align*}
from which the claim follows directly.

Note that due to the rescaling procedure the constant $c$ does not depend on the length of the curve.
\end{proof}

\begin{lemma}
Let $\vec{\phi}$ be a normal vector field and $\epsilon \in (0,1)$. Then $\forall\,\,  k \geq 2$ and all $0<i<k$ we have
\begin{align}
\label{eq6.4}
\|\nabla_{s}^{i} \vec{\phi} \|_{2} \leq c \left( \epsilon \| \vec{\phi} \|_{k,2} +
\epsilon^{\frac{i}{i-k}} \| \vec{\phi} \|_{2} \right),
\end{align}
with $c=c(i,k)$. In particular it follows that
\begin{align}
\label{eq6.5}
\|\nabla_{s}^{i} \vec{\phi} \|_{2} \leq c \| \vec{\phi} \|_{k,2}^{\frac{i}{k}}
\| \vec{\phi} \|_{2}^{\frac{k-i}{k}}.
\end{align}
\end{lemma}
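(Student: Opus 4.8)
The plan is to reduce everything to the base case $k=2$, which is exactly Lemma~\ref{indstep1}, and then to propagate it to all orders by induction on $k$. Throughout write $a_i := \| \nabla_s^{i} \vec{\phi}\|_{2}$, so that $\| \vec{\phi}\|_{k,2} = \sum_{j=0}^{k} a_j$ and the two claims read $a_i \leq c\,(\epsilon \| \vec{\phi}\|_{k,2} + \epsilon^{i/(i-k)} a_0)$ and $a_i \leq c\, \| \vec{\phi}\|_{k,2}^{i/k} a_0^{(k-i)/k}$. Since $a_k \leq \| \vec{\phi}\|_{k,2}$, it suffices to prove the slightly sharper statement
\[
(\star_k):\qquad a_i \leq c\,\big(\epsilon\, a_k + \epsilon^{-i/(k-i)} a_0\big)\quad\text{for all }0<i<k,\ \epsilon\in(0,1),
\]
in which only the top-order quantity $a_k$ appears on the right. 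The case $k=2$ (only $i=1$) follows from Lemma~\ref{indstep1} by writing $\| \vec{\phi}\|_{2,2}=a_0+a_1+a_2$ and absorbing the term $c\epsilon a_1$ into the left-hand side for $\epsilon$ small, then extending to all $\epsilon\in(0,1)$ by the usual monotonicity argument (enlarging the constant). Applying this base case to the normal field $\nabla_s^{j-1}\vec{\phi}$ in place of $\vec{\phi}$ gives the consecutive interpolation inequality $a_j \leq c(\epsilon\, a_{j+1}+\epsilon^{-1} a_{j-1})$ for every $j\geq 1$, which will drive the induction.

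For the inductive step I assume $(\star_k)$ and deduce $(\star_{k+1})$, bounding $a_i$ for every $1\leq i\leq k$. First treat the top index $i=k$: by the consecutive inequality $a_k \leq c(\tilde\epsilon\, a_{k+1}+\tilde\epsilon^{-1} a_{k-1})$, and by $(\star_k)$ applied with index $k-1$ one has $a_{k-1}\leq c(\eta\, a_k + \eta^{-(k-1)} a_0)$; choosing $\eta$ a small fixed multiple of $\tilde\epsilon$ lets me absorb the resulting $a_k$-term on the left and obtain $a_k \leq c(\epsilon\, a_{k+1}+\epsilon^{-k} a_0)$, which is $(\star_{k+1})$ for $i=k$. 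For a lower index $1\leq i\leq k-1$ I combine $(\star_k)$, namely $a_i \leq c(\eta\, a_k + \eta^{-i/(k-i)} a_0)$, with the estimate for $a_k$ just proved, $a_k\leq c(\delta\, a_{k+1}+\delta^{-k} a_0)$. The choice that closes the induction is $\eta=\epsilon^{(k-i)/(k+1-i)}$ and $\delta=\epsilon^{1/(k+1-i)}$, for which $\eta\delta=\epsilon$ while both $a_0$-coefficients reduce exactly to the prescribed power $\epsilon^{-i/(k+1-i)}$.

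The delicate point — the only place where anything beyond bookkeeping occurs — is precisely this matching of exponents: one must verify that with the above $\eta,\delta$ the two contributions to the $a_0$-term, namely $\eta^{-i/(k-i)}$ and $\eta\delta^{-k}$, collapse to the single power $\epsilon^{-i/(k+1-i)}$ required by $(\star_{k+1})$, while the top-order coefficient $\eta\delta$ stays of size $\epsilon$; the underlying identity is $\frac{k-i}{k+1-i}-\frac{k}{k+1-i}=-\frac{i}{k+1-i}$ together with $\frac{k-i}{k+1-i}\cdot\frac{i}{k-i}=\frac{i}{k+1-i}$. Once $(\star_k)$ is established for all $k$, inequality \eqref{eq6.4} is immediate because $a_k\leq \| \vec{\phi}\|_{k,2}$. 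Finally \eqref{eq6.5} follows from \eqref{eq6.4} by optimizing in $\epsilon$: assuming $a_0>0$ (otherwise $\vec{\phi}\equiv 0$ and all terms vanish) and $a_0<\| \vec{\phi}\|_{k,2}$ (otherwise the higher norms vanish and the claim is trivial), the admissible choice $\epsilon=(a_0/\| \vec{\phi}\|_{k,2})^{(k-i)/k}\in(0,1)$ makes both terms on the right of \eqref{eq6.4} equal to $\| \vec{\phi}\|_{k,2}^{i/k} a_0^{(k-i)/k}$, which is exactly the asserted multiplicative inequality.
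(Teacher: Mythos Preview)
Your proof is correct and follows essentially the same inductive strategy as the paper: base case $k=2$ from Lemma~\ref{indstep1}, in the induction step treat the top index first (by combining a shifted estimate with the inductive hypothesis and absorbing the resulting middle term), then deduce the lower indices by composing the inductive hypothesis with the freshly proved top-index bound, and finally optimize in $\epsilon$ to pass from the additive inequality to the multiplicative one. The only organizational difference is that you carry the sharper statement $(\star_k)$ (with just $a_k$ and $a_0$ on the right) as the inductive claim and feed the induction with the consecutive inequality $a_j\le c(\epsilon\,a_{j+1}+\epsilon^{-1}a_{j-1})$, whereas the paper keeps the full norm $\|\vec{\phi}\|_{k,2}$ on the right throughout and compensates via the auxiliary estimate $\|\vec{\phi}\|_{k,2}\le 2\|\nabla_s^{k}\vec{\phi}\|_2 + c\|\vec{\phi}\|_2$; your formulation is slightly cleaner but the underlying argument is the same.
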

\begin{proof}
We may assume that $\mathcal{L}[f]=1$. Equation \eqref{eq6.5} follows from
\eqref{eq6.4} by choosing $\epsilon$ so that the two terms in the right-hand
side of \eqref{eq6.4} are equal, i.e. by imposing
$\epsilon \| \vec{\phi}\|_{k,2}=\epsilon^{\frac{i}{i-k}} \| \vec{\phi} \|_{2} $
from which we derive that
$$ \epsilon =\left(\frac{\| \vec{\phi}\|_{k,2}}{\| \vec{\phi}\|_{2}}\right)^{\frac{i-k}{k}} <1.$$
It remains to show the first claim. Lemma \ref{indstep1} gives \eqref{eq6.4} for the case $k=2$ and $i=1$. 
A suitable induction argument yields the stated result. 
More precisely: our \underline{induction assumption} (A)  can be stated as
\begin{align}
\label{(A)}
\exists \, \mathbf{k}>2 : \, \forall k \leq \mathbf{k}, \forall \, i: 0 <i < k
: \;  \|\nabla_{s}^{i} \vec{\phi} \|_{2}  \leq c \left( \epsilon \| \vec{\phi} \|_{k,2} +
\epsilon^{\frac{i}{i-k}} \| \vec{\phi} \|_{2} \right).
\;
\tag{A}
\end{align}
We prove the estimate for $k =\mathbf{k}+1$ and all $i: 0< i< \mathbf{k}+1$.

First of all note that \eqref{(A)} implies that
\begin{align}
\label{eq6.6}
\| \vec{\phi} \|_{k,2}  \leq 2 \| \nabla_s^k \vec{\phi} \|_{2} +
c \| \vec{\phi} \|_{2},  \qquad \text{ for all } k \leq \mathbf{k}.
\end{align}
(Indeed we have
 \begin{align*}
 \|\vec{\phi} \|_{k,2} &= \|\nabla_s^k \vec{\phi} \|_{2} + \sum_{i=1}^{k-1}\| \nabla_s^i\vec{\phi} \|_{2} + \|\vec{\phi} \|_{2} \\ &
 \leq \|\nabla_s^k \vec{\phi} \|_{2} + \sum_{i=1}^{k-1}
 c\left( \epsilon_i \| \vec{\phi} \|_{k,2} +
\epsilon_i^{\frac{i}{i-k}} \| \vec{\phi} \|_{2} \right) +\| \vec{\phi} \|_{2},
 \end{align*}
 and choosing $\epsilon_i$ such that $c \sum_{i=1}^{k-1}
  \epsilon_i =\frac{1}{2}$ we derive immediately \eqref{eq6.6}.)
  
To prove the induction step we distinguish two cases:

\smallskip
\underline{Induction step, case $i=\mathbf{k}$:}
Using  \eqref{(A)} we obtain
\begin{align*}
\| \nabla_s^{\mathbf{k}} \vec{\phi} \|_2 = \| \nabla_s^{\mathbf{k}-1} ( \nabla_s \vec{\phi} )\|_2 & \leq c( \epsilon_1 \| \nabla_s \vec{\phi} \|_{\mathbf{k},2} +
\epsilon_1^{1-\mathbf{k}} \| \nabla_s \vec{\phi} \|_{2} ) \\
& \leq c ( \epsilon_1 \|  \vec{\phi} \|_{\mathbf{k}+1,2} +
\epsilon_1^{1-\mathbf{k}} \| \nabla_s \vec{\phi} \|_{2} ) 
\end{align*}
for  $\epsilon_1 \in (0,1)$.
Next by $\eqref{(A)}$  and  \eqref{eq6.6} we get
\begin{align*}
\| \nabla_s \vec{\phi} \|_2 \leq c (\epsilon_2 \| \vec{\phi} \|_{\mathbf{k},2} +
\epsilon_2^{\frac{1}{1-\mathbf{k}}} \|\vec{\phi} \|_{2} )
\leq c \epsilon_2 (2 \| \nabla_s^{\mathbf{k}} \vec{\phi} \|_{2} +
c \| \vec{\phi} \|_{2})  +c \epsilon_2^{\frac{1}{1-\mathbf{k}}} \|\vec{\phi} \|_{2}.
\end{align*}
Putting the above two estimates together we find
\begin{align*}
\| \nabla_s^{\mathbf{k}} \vec{\phi} \|_2  \leq c \epsilon_1 \|  \vec{\phi} \|_{\mathbf{k}+1,2} + c\epsilon_1^{1-\mathbf{k}}  \epsilon_2 \|  \nabla_s^{\mathbf{k}}\vec{\phi} \|_{2} + c\epsilon_1^{1-\mathbf{k}}(\epsilon_2  +\epsilon_2^{\frac{1}{1-\mathbf{k}}}) \| \vec{\phi} \|_2.
\end{align*}
Choosing $\epsilon_2 <1$ so that $ c\epsilon_1^{1-\mathbf{k}}  \epsilon_2 =\frac{1}{2}$ we infer
\begin{align*}
\| \nabla_s^{\mathbf{k}} \vec{\phi} \|_2  \leq c \epsilon_1 \|  \vec{\phi} \|_{\mathbf{k}+1,2} +  c(1  +\frac{1}{\epsilon_1^{\mathbf{k}}}) \| \vec{\phi} \|_2.
\end{align*}
Using now the fact that $1 \leq \frac{1}{\epsilon_1}$ (hence $ 1 \leq \frac{1}{\epsilon_1^{\mathbf{k}}}$) we obtain
\begin{align*}
\| \nabla_s^{\mathbf{k}} \vec{\phi} \|_2  \leq c (\epsilon_1 \|  \vec{\phi} \|_{\mathbf{k}+1,2} +  \frac{1}{\epsilon_1^{\mathbf{k}}} \| \vec{\phi} \|_2)
\end{align*}
and the claim follows.

\smallskip
\underline{Induction step, case $0<i<\mathbf{k}$:}  
Using twice \eqref{(A)} we get for $\epsilon, \epsilon_{j} \in (0,1)$
\begin{align*}
\|\nabla_{s}^{i} \vec{\phi} \|_{2} & \leq c \left( \epsilon \| \vec{\phi} \|_{\mathbf{k},2} +
\epsilon^{\frac{i}{i-\mathbf{k}}} \| \vec{\phi} \|_{2} \right)\\
&=c \left(  \epsilon \| \nabla_s^{\mathbf{k}} \vec{\phi} \|_2
+ \epsilon \sum_{j=1}^{\mathbf{k}-1} \| \nabla_s^{j} \vec{\phi} \|_2
+ \epsilon^{\frac{i}{i-\mathbf{k}}} \| \vec{\phi} \|_{2} + \epsilon 
\| \vec{\phi} \|_{2} \right)\\
& \leq c \left(  \epsilon \| \nabla_s^{\mathbf{k}} \vec{\phi} \|_2
+ c \epsilon \sum_{j=1}^{\mathbf{k}-1} ( \epsilon_j \| \vec{\phi}\|_{\mathbf{k},2} + \epsilon_j^{\frac{j}{j-\mathbf{k}}} \| \vec{\phi}\|_2)
+ \epsilon^{\frac{i}{i-\mathbf{k}}} \| \vec{\phi} \|_{2}
\right) \, ,
\end{align*}
since $ \epsilon <\epsilon^{\frac{i}{i-\mathbf{k}}}$. Using \eqref{eq6.6} we find
\begin{align*}
\|\nabla_{s}^{i} \vec{\phi} \|_{2} & \leq c\left(  \epsilon \| \nabla_s^{\mathbf{k}} \vec{\phi} \|_2 +2 c
  \epsilon  \| \nabla_s^{\mathbf{k}} \vec{\phi} \|_2 \sum_{j=1}^{\mathbf{k}-1}
  \epsilon_j \right. \\
& \quad \left. + c\epsilon  \| \vec{\phi} \ |_2
  \sum_{j=1}^{\mathbf{k}-1}\epsilon_j + c \epsilon \| \vec{\phi} \|_2 \sum_{j=1}^{\mathbf{k}-1}\epsilon_j^{\frac{j}{j-\mathbf{k}}}   +\epsilon^{\frac{i}{i-\mathbf{k}}} \| \vec{\phi} \|_{2}
\right)  \, .
\end{align*}
Choosing $\epsilon_j$ so that $\sum_{j=1}^{\mathbf{k}-1}\epsilon_j
=\frac{1}{2}$ and using $ \epsilon <\epsilon^{\frac{i}{i-\mathbf{k}}}$ we get 
\begin{align*}
\|\nabla_{s}^{i} \vec{\phi} \|_{2} & \leq c (\epsilon \| \nabla_s^{\mathbf{k}} \vec{\phi} \|_2 + \epsilon^{\frac{i}{i-\mathbf{k}}} \| \vec{\phi} \|_{2}).
\end{align*}
Using the estimate obtained above for the case $i=\mathbf{k}$ we deduce
\begin{align*}
\|\nabla_{s}^{i} \vec{\phi} \|_{2} & \leq c \left(\epsilon (\epsilon_1 \|  \vec{\phi} \|_{\mathbf{k}+1,2} + \frac{1}{\epsilon_1^{\mathbf{k}}}\| \vec{\phi} \|_2) + \epsilon^{\frac{i}{i-\mathbf{k}}} \| \vec{\phi} \|_{2} \right).
\end{align*}
By choosing $\epsilon_1= \epsilon^{\frac{1}{\mathbf{k}-i}} <1$ 
we get
\begin{align*}
\|\nabla_{s}^{i} \vec{\phi} \|_{2} & \leq c (\epsilon^{\frac{\mathbf{k}+1-i}{\mathbf{k}-i}} \|  \vec{\phi} \|_{\mathbf{k}+1,2} + \epsilon^{\frac{i}{i-\mathbf{k}}} \| \vec{\phi} \|_{2})= c\left( \tilde{\epsilon} \| \vec{\phi} \|_{\mathbf{k}+1,2} +
\tilde{\epsilon}^{\frac{i}{i-(\mathbf{k}+1)}} \| \vec{\phi} \|_{2} \right),
\end{align*}
where $\tilde{\epsilon}= \epsilon^{\frac{\mathbf{k}+1-i}{\mathbf{k}-i}}$, and the claim follows.
\end{proof}

\begin{lemma}
Let $\vec{\phi}$ be a normal vector field. Then for $p \geq 2$ and for all $k \geq 1$ and $0 \leq i <k$ we have that
\begin{equation}
\label{eq6.7}
\|\nabla_{s}^{i} \vec{\phi} \|_{p}  \leq c \|\nabla_{s}^{i} \vec{\phi} \|_{k-i,2}^{\frac{1}{k-i}(\frac{1}{2}-\frac{1}{p})}  \|\nabla_{s}^{i} \vec{\phi} \|_{2}^{1-\frac{1}{k-i}(\frac{1}{2}-\frac{1}{p})} 
\end{equation}
where c=c(p,n,k).
\end{lemma}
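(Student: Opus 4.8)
The plan is to reduce \eqref{eq6.7} to a one-dimensional Gagliardo--Nirenberg inequality for the single normal field $\vec{\psi}:=\nabla_s^{i}\vec{\phi}$ and then to bootstrap from the first derivative using the already established $L^2$-interpolation \eqref{eq6.5}. First I would exploit the scale invariance of the norms $\|\cdot\|_p$ to reduce to the case $\mathcal{L}[f]=1$ with $f$ parametrized by arc-length, exactly as in the proof of Lemma \ref{indstep1}; then $\|\cdot\|_p$ coincides with $\|\cdot\|_{L^p}$ and $\partial_s=\partial_x$. Writing $m:=k-i\geq 1$ and observing that $\|\nabla_s^{i}\vec{\phi}\|_{k-i,2}=\|\vec{\psi}\|_{m,2}$ by the very definition of the norm, the assertion becomes
\[
\|\vec{\psi}\|_{L^p}\leq c\,\|\vec{\psi}\|_{m,2}^{\theta}\,\|\vec{\psi}\|_{2}^{1-\theta},\qquad \theta:=\tfrac1m\big(\tfrac12-\tfrac1p\big).
\]
For $p=2$ one has $\theta=0$ and nothing to prove, so I would assume $p>2$.

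The key step is the case $m=1$. Applying \eqref{6.1} to the scalar function $g:=|\vec{\psi}|$ and optimizing in $\epsilon$ (capping the minimizer at $\epsilon=1$, which is why the constraint $\epsilon<1$ in \eqref{6.1} forces a harmless additive term) gives the one-dimensional Agmon-type bound $\|g\|_{C}\leq c\|g_x\|_{L^2}^{1/2}\|g\|_{L^2}^{1/2}+c\|g\|_{L^2}$. By Lemma \ref{lemmafacile} one has $|g_x|=|\partial_s|\vec{\psi}||\leq|\nabla_s\vec{\psi}|$ almost everywhere, whence $\|g_x\|_{L^2}\leq\|\nabla_s\vec{\psi}\|_{2}$ and $\|g\|_{L^2}=\|\vec{\psi}\|_{2}$, so that
\[
\|\vec{\psi}\|_{L^\infty}=\|g\|_{C}\leq c\,\|\vec{\psi}\|_{1,2}^{1/2}\,\|\vec{\psi}\|_{2}^{1/2}.
\]
Interpolating $L^p$ between $L^2$ and $L^\infty$, namely $\|\vec{\psi}\|_{L^p}\leq\|\vec{\psi}\|_{L^\infty}^{1-2/p}\|\vec{\psi}\|_{L^2}^{2/p}$, and inserting the previous estimate yields precisely the $m=1$ inequality: the exponents $\tfrac12(1-\tfrac2p)=\tfrac12-\tfrac1p$ and $\tfrac12(1-\tfrac2p)+\tfrac2p=\tfrac12+\tfrac1p$ match $\theta$ and $1-\theta$ for $m=1$.

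Finally, for general $m\geq 1$ I would feed \eqref{eq6.5} into the $m=1$ estimate. Since $\|\vec{\psi}\|_{2}\leq\|\vec{\psi}\|_{m,2}$ and, by \eqref{eq6.5}, $\|\nabla_s\vec{\psi}\|_{2}\leq c\|\vec{\psi}\|_{m,2}^{1/m}\|\vec{\psi}\|_{2}^{1-1/m}$, both summands of $\|\vec{\psi}\|_{1,2}=\|\vec{\psi}\|_{2}+\|\nabla_s\vec{\psi}\|_{2}$ are bounded by $c\|\vec{\psi}\|_{m,2}^{1/m}\|\vec{\psi}\|_{2}^{1-1/m}$. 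Substituting this into the $m=1$ bound $\|\vec{\psi}\|_{L^p}\leq c\|\vec{\psi}\|_{1,2}^{1/2-1/p}\|\vec{\psi}\|_{2}^{1/2+1/p}$ and collecting powers, the exponent of $\|\vec{\psi}\|_{m,2}$ becomes $\tfrac1m(\tfrac12-\tfrac1p)=\theta$ and that of $\|\vec{\psi}\|_{2}$ becomes $(1-\tfrac1m)(\tfrac12-\tfrac1p)+(\tfrac12+\tfrac1p)=1-\theta$, which is the claim. I expect the only delicate points to be the $\epsilon$-optimization in the $m=1$ step and the careful bookkeeping of the length powers hidden in the reduction to $\mathcal{L}[f]=1$; both are routine once the structure above is fixed.
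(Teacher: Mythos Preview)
Your argument is correct and follows essentially the same route as the paper: reduce to $\mathcal{L}[f]=1$ with arc-length parametrisation, establish the case $k-i=1$ first, and then feed \eqref{eq6.5} into that base case to obtain the general estimate. The only difference is in how the base case is obtained: the paper quotes a Gagliardo--Nirenberg inequality from Adams (Thm.~5.8) directly, whereas you derive it from the elementary $C^{0}$ bound \eqref{6.1} combined with $L^{p}$--$L^{\infty}$--$L^{2}$ interpolation; this is a minor, and in fact slightly more self-contained, variation.
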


\begin{proof}
We may assume that $\mathcal{L}[f]=1$ and that the curve is parametrized with
respect to arc-length. We distinguish two cases.

\underline{Case $k-i=1$:}  
Using \cite[Thm.5.8]{Adams} we get (for a constant $c=c(p,n,i,k)$) 
\begin{align*}
\|\nabla_{s}^{i} \vec{\phi} \|_{p} =\| \, |\nabla_{s}^{i} \vec{\phi}| \,
\|_{p}  & \leq c \| \,|\nabla_{s}^{i} \vec{\phi}
|\,\|_{W^{1,2}}^{\frac{1}{2}-\frac{1}{p}} \|\nabla_{s}^{i} \vec{\phi}
\|_{L^{2}}^{1-(\frac{1}{2}-\frac{1}{p})} \\
& \leq c\|\nabla_{s}^{i} \vec{\phi} \|_{1,2}^{\frac{1}{2}-\frac{1}{p}}  \|\nabla_{s}^{i} \vec{\phi} \|_{2}^{1-(\frac{1}{2}-\frac{1}{p})}, 
\end{align*}
where we have used Lemma \ref{lemmafacile} for the last inequality.

\underline{Case $ k-i>1$:} 
Using the previous step we infer
\begin{align*}
\|\nabla_{s}^{i} \vec{\phi} \|_{p} &\leq c \|\nabla_{s}^{i} \vec{\phi} \|_{1,2}^{\frac{1}{2}-\frac{1}{p}}  \|\nabla_{s}^{i} \vec{\phi} \|_{2}^{1-(\frac{1}{2}-\frac{1}{p})} \\
&\leq c
\|\nabla_{s}^{i} \vec{\phi} \|_{2} + c \|\nabla_{s}^{i+1} \vec{\phi} \|_{2}^{\frac{1}{2}-\frac{1}{p}}  \|\nabla_{s}^{i} \vec{\phi} \|_{2}^{1-(\frac{1}{2}-\frac{1}{p})}.
\end{align*}
Since $k-i>1$ we can use \eqref{eq6.5} and get
\begin{align*}
\|\nabla_{s}^{i+1} \vec{\phi} \|_{2} =\|\nabla_{s}(\nabla_{s}^{i} \vec{\phi}) \|_{2}
\leq c \|\nabla_{s}^{i} \vec{\phi} \|_{k-i,2}^\frac{1}{k-i} \|\nabla_{s}^{i} \vec{\phi} \|_{2}^{1- \frac{1}{k-i}},
\end{align*}
where we have chosen the constant independent of $i$.
Putting these two estimates together we obtain
\begin{align*}
\|\nabla_{s}^{i}& \vec{\phi} \|_{p} \leq c \|\nabla_{s}^{i} \vec{\phi} \|_{2} + c\|\nabla_{s}^{i} \vec{\phi} \|_{k-i,2}^{\frac{1}{k-i}(\frac{1}{2}-\frac{1}{p})}  \|\nabla_{s}^{i} \vec{\phi} \|_{2}^{(1-\frac{1}{k-i})(\frac{1}{2}-\frac{1}{p})} \|\nabla_{s}^{i} \vec{\phi} \|_{2}^{1-(\frac{1}{2}-\frac{1}{p})}\\
&=c \|\nabla_{s}^{i} \vec{\phi} \|_{2} + c\|\nabla_{s}^{i} \vec{\phi} \|_{k-i,2}^{\frac{1}{k-i}(\frac{1}{2}-\frac{1}{p})}  \|\nabla_{s}^{i} \vec{\phi} \|_{2}^{1-\frac{1}{k-i}(\frac{1}{2}-\frac{1}{p})} \\
&=c\|\nabla_{s}^{i} \vec{\phi} \|_{2}^{\frac{1}{k-i}(\frac{1}{2}-\frac{1}{p})}
\|\nabla_{s}^{i} \vec{\phi} \|_{2}^{1-\frac{1}{k-i}(\frac{1}{2}-\frac{1}{p})}
+ c\|\nabla_{s}^{i} \vec{\phi} \|_{k-i,2}^{\frac{1}{k-i}(\frac{1}{2}-\frac{1}{p})}  \|\nabla_{s}^{i} \vec{\phi} \|_{2}^{1-\frac{1}{k-i}(\frac{1}{2}-\frac{1}{p})}  \\
& \leq 
c \|\nabla_{s}^{i} \vec{\phi} \|_{k-i,2}^{\frac{1}{k-i}(\frac{1}{2}-\frac{1}{p})}  \|\nabla_{s}^{i} \vec{\phi} \|_{2}^{1-\frac{1}{k-i}(\frac{1}{2}-\frac{1}{p})} ,
\end{align*}
and the claim follows.
\end{proof}

Now we have all tools at disposal to prove  Lemma \ref{leminter}. 

\noindent \textbf{Lemma \ref{leminter}}
\emph{
Let $f: I \rightarrow \mathbb{R}^n$ be a smooth regular curve. Then for all $k \in \mathbb{N}$, $p \geq 2$ and $0 \leq i<k$ we have
\begin{equation*}
\| \nabla_s^{i} \vec{\kappa}\|_{p} \leq C \|\vec{\kappa}\|_{2}^{1-\alpha} \|\vec{\kappa}\|_{k,2}^{\alpha} \, ,  
\end{equation*}
with $\alpha= (i+\frac12 -\frac{1}{p})/k$ and $C=C(n,k,p)$.}

\begin{proof}
The case $k=1$, $i=0$ is direct consequence of \eqref{eq6.7}. 
For $k \geq 2$, $0 \leq i <k$ we get using again \eqref{eq6.7} that
\begin{align*}
\|\nabla_{s}^{i} \vec{\kappa} \|_{p} &\leq c \|\nabla_{s}^{i} \vec{\kappa} \|_{k-i,2}^{\frac{1}{k-i}(\frac{1}{2}-\frac{1}{p})}  \|\nabla_{s}^{i} \vec{\kappa} \|_{2}^{1-\frac{1}{k-i}(\frac{1}{2}-\frac{1}{p})} \leq c \| \vec{\kappa} \|_{k,2}^{\frac{1}{k-i}(\frac{1}{2}-\frac{1}{p})}  \|\nabla_{s}^{i} \vec{\kappa} \|_{2}^{1-\frac{1}{k-i}(\frac{1}{2}-\frac{1}{p})}.
\end{align*}
But from \eqref{eq6.5} we know that
$
 \|\nabla_{s}^{i} \vec{\kappa} \|_{2} \leq c  \|\vec{\kappa} \|_{k,2}^{\frac{i}{k}}   \| \vec{\kappa} \|_{2}^{\frac{k-i}{k}},
$
so that we obtain
\begin{align*}
\|\nabla_{s}^{i} \vec{\kappa} \|_{p} &\leq c \|\vec{\kappa} \|_{k,2}^{\frac{i}{k} -\frac{1}{k-i}(\frac{1}{2}-\frac{1}{p}) \frac{i}{k}} 
\|\vec{\kappa} \|_{k,2}^{ \frac{1}{k-i}(\frac{1}{2}-\frac{1}{p})}  
 \| \vec{\kappa} \|_{2}^{\frac{k-i}{k} -\frac{1}{k}(\frac{1}{2}-\frac{1}{p})}\\
 &=c \|\vec{\kappa} \|_{k,2}^{\frac{i+ \frac{1}{2}-\frac{1}{p}}{k}}
  \| \vec{\kappa} \|_{2}^{1-\frac{i+ \frac{1}{2}-\frac{1}{p}}{k} },
\end{align*}
and the claim follows.
\end{proof}


\section{Compatibility conditions}
\label{AppendixCompa}

In order to have smoothness of the solution of the parabolic problem up to  time $t=0$, the initial data has to satisfy some compatibility conditions at the boundary. These make sure that at the initial time the information given by the boundary conditions agree with those provided by the equation.

First of all we need  to introduce some notation. Let $L$ denote the quasilinear differential operator of fourth order such that $\partial_t f =Lf$ as in \eqref{eqh}. Similarly, for $i \in \mathbb{N}$ let $L^{(i)}$ denote the quasilinear differential operator of order $4i$ such that 
$$\partial_t^i f = L^{(i)} f \, .$$
Let $Q^{(0)}$ denote the following quasilinear second order operator
$$Q^{(0)}(f)=\vec{\kappa}+\langle \zeta, \tau \rangle \tau \, ,$$
with $\zeta \in \mathbb{R}^n$ fixed. For $i \in \mathbb{N}$ let $Q^{(i)}$ be the quasilinear differential operator of order $2+4i$ such that
$$Q^{(i)} = \partial_{t}^{i} Q^{(0)} \, .$$

Then the compatibility condition requires that
\begin{equation}
\label{compcond}
L^{(i)} f_0 =0 \, \mbox{ and } \, Q^{(i)} f_0=0 \quad \mbox{ at } \, x\in \{0,1\} \, \mbox{ for all }i \in \mathbb{N}\, . 
\end{equation}

The existence of  initial data satisfying \eqref{compcond} can be easily proved. We give a couple of examples: if $\zeta=0$ then  the line connecting  $f_0(0)=f_{-}$ and $f_0(1)=f_+$ (with any perturbation $\varphi \in C_0^\infty (\bar{I})$ to it) is a good candidate. If $\zeta \neq 0$  one can take a smooth curve $f_0$ such that in a small neighborhood of the boundary points $f_0$ is  a straight line with tangent equal to $\zeta/|\zeta|$. In this way we obtain that $\vec{\kappa}$ and all its derivatives disappear in proximity of the boundary and the compatibility conditions can be easily verified.




\bibliography{ref}
\bibliographystyle{acm}


\small

\noindent \textit{Anna Dall'Acqua}, Max Planck Institute for Mathematics in the Sciences, Inselstra\ss e~22,  04103 Leipzig, Germany and Otto-von-Guericke Universit\"at Magdeburg, Universit\"atsplatz 2, 39106 Magdeburg,  \texttt{acqua@mis.mpg.de}

\bigskip 
\noindent \textit{Paola Pozzi}, Universit\"at Duisburg-Essen, Forsthausweg 2, 47057 Duisburg, Germany, \texttt{ paola.pozzi@uni-due.de}



\end{document}